\documentclass[12pt,a4paper]{article}

\usepackage{latexsym}
\usepackage{amsfonts}
\usepackage{amssymb}
\usepackage{amsmath}
\usepackage{amsthm}
\usepackage{mathrsfs}
\usepackage{enumerate}
\usepackage{slashed}
\usepackage{color}
\usepackage[all]{xy}

\setcounter{tocdepth}{1}
\setlength{\parskip}{1ex plus0.5ex minus0.2ex}
\setlength{\textwidth}{16cm}
\setlength{\textheight}{22cm}
\setlength{\topmargin}{0cm}
\setlength{\oddsidemargin}{0cm}
\setlength{\evensidemargin}{0cm}

\sloppy

\newcommand{\bwedge}{\raisebox{0.2ex}{${\textstyle \bigwedge}$}}

\newcommand{\mathsl}[1]{\mbox{\textsl{\textsf{#1}}$\;\!$}}

\numberwithin{equation}{section}
\newtheorem{dfn}{Definition}
\newtheorem{lem}{Lemma}
\newtheorem{prp}{Proposition}
\newtheorem{thm}{Theorem}
\newtheorem*{te*}{Theorem}

\begin{document}

\title{$C^*$-Completions and the DFR-Algebra}
\author{Michael Forger%
        \thanks{Partly supported by CNPq
                (Conselho Nacional de Desenvolvimento
                 Cient\'{\i}fico e Tecno\-l\'o\-gico), Brazil; \newline
                E-mail: \textsf{forger@ime.usp.br}}
        ~~and~~
        Daniel V.\ Paulino%
        \thanks{Supported by FAPESP
                (Funda\c{c}\~ao de Amparo \`a Pesquisa do
                 Estado de S\~ao Paulo), Brazil; \newline
                 E-mail: \textsf{dberen@ime.usp.br}}
        }
\date{\small{Departamento de Matem\'atica Aplicada \\
      Instituto de Matem\'atica e Estat\'{\i}stica \\
      Universidade de S\~ao Paulo \\
      Caixa Postal 66281 \\
      BR--05314-970~ S\~ao Paulo, S.P., Brazil}}
\maketitle

\thispagestyle{empty}

\begin{abstract}
 \noindent
 The aim of this paper is to present the construction of a general family
 of $C^*$-algebras which includes, as a special case, the ``quantum
 spacetime algebra'' introduced by Doplicher, Fredenhagen and Roberts.
 It is based on an extension of the notion of $C^*$-completion from algebras
 to bundles of algebras, compatible with the usual $C^*$-completion of the 
 appropriate algebras of sections, combined with a novel definition for the
 algebra of the canonical commutation relations using Rieffel's theory of
 strict deformation quantization. Taking the $C^*$-algebra of continuous
 sections vanishing at infinity, we arrive at a functor associating
 a $C^*$-algebra to any Poisson vector bundle and recover the original
 DFR-algebra as a particular example.
\end{abstract}

\begin{flushright}
 \parbox{12em}{
  \begin{center}
   Universidade de S\~ao Paulo \\
   RT-MAP-1201 \\
   Revised version \\
   December 2014
  \end{center}
 }
\end{flushright}

\newpage

\setcounter{page}{1}

\section{Introduction} 

In a seminal paper published in 1995~\cite{DFR}, Doplicher, Fredenhagen and
Roberts (DFR) have introduced a special $C^*$-algebra to provide a model for
spacetime in which localization of events can no longer be performed with
arbitrary precision: they refer to it as a model of ``quantum spacetime''.
Apart from being beautifully motivated, their construction admits a
mathematically simple (re)formulation: it starts from a symplectic
form on Minkowski space and considers the corresponding canonical
commutation relations (CCRs), which can be viewed as a representation
of a well-known finite-dimensional nilpotent Lie algebra, \linebreak
the Heisenberg (Lie) algebra.
More precisely, the CCRs appear in Weyl form, i.e., through an
irreducible, strongly continuous, unitary representation of the
corresponding Heisenberg (Lie) group~-- which, according to the
well-known von Neumann theorem, is unique up to unitary equivalence.
That representation is then used to define a $C^*$-algebra that we 
propose to call the Heisenberg $C^*$-algebra, related to the original
representation through Weyl quantization, that is, via the Weyl-Moyal 
star product.

The main novelty in the DFR construction is that the
underlying symplectic form is treated as a \emph{variable}.
In this way, one is able to reconcile the construction with the principle
of relativistic invariance: since Minkowski space $\mathbb{R}^{1,3}$ has
no distinguished symplectic structure, the only way out is to consider,
simultaneously, \emph{all} possible symplectic structures on Minkowski
space that can be obtained from a fixed one, that is, its orbit~$\Sigma$ 
under the action of the Lorentz group.
This orbit turns out to be isomorphic to $\, TS^2 \times \mathbb{Z}_2$,
thus explaining the origin of the extra dimensions that appear
in this approach.%
\footnote{Note that the factor $\mathbb{Z}_2$ comes from the fact
that we are dealing with the full Lorentz group; it would be absent if
we dropped (separate) invariance under parity $P$ or time reversal $T$.}%
$^,$%
\footnote{The generic feature that any deformation quantization of the function
algebra over Minkowski space must contain, within its classical limit,
some kind of extra factor has been noted and emphasized in~\cite{DVMK}.}

Assuming the symplectic form to vary over the orbit~$\Sigma$ of
some fixed representative produces not just a single Heisenberg
$C^*$-algebra but an entire $C^*$-bundle over this orbit, with the
Heisenberg $C^*$-algebra for the chosen representative as typical fiber.
The continuous sections of that $C^*$-bundle vanishing at infinity then
define a ``section'' $C^*$-algebra, which carries a natural action of
the Lorentz group induced from its natural action on the underlying
bundle of $C^*$-algebras (which moves base points as well as fibers).
Besides, this ``section'' $C^*$-algebra is also a $C^*$-module over the
``scalar'' $C^*$-algebra $C_0(\Sigma)$ of continuous functions on~$\Sigma$
vanishing at infinity.
In the special case considered by DFR, the under\-lying $C^*$-bundle turns
out to be globally trivial, which in view of von Neumann's theorem implies
a classification result on irreducible as well as on Lorentz covariant
representations of the DFR-algebra.

In retrospect, it is clear that when formulated in this geometrically
inspired language, the results of~\cite{DFR} yearn for generalization~--
even if only for purely mathematical reasons.

From a more physical side, one of the original motivations for the present
work was an idea of J.C.A.~Barata, who proposed to look for a clearer
geo\-metrical interpretation of the classical limit of the DFR-algebra
in terms of coherent states, as developed by K.~Hepp~\cite{HE}.
This led the second author to investigate possible generalizations of
the DFR construction to other vector spaces than four-dimensional
Minkowski space and other Lie groups than the Lorentz group in four
dimensions.

As it turned out, the crucial mathematical input for the construction of
the DFR-algebra is a certain symplectic vector bundle over the orbit~$\Sigma$,
namely, the trivial vector bundle $\, \Sigma \times \mathbb{R}^{1,3}$ equipped
with the ``tautological'' symplectic structure, which on the fiber over a
point $\, \sigma \in \Sigma \,$ is just $\sigma$ itself.
Here, we show how, following an approach similar to the one in~%
\cite{Pic}, one can generalize this construction to any Poisson
vector bundle, without supposing homogeneity under some group
action or nondegeneracy of the Poisson tensor.

The basic idea of the procedure is to use the given Poisson structure
to first construct a bundle of Fr\'echet $*$-algebras over the same base
space whose fibers are certain function spaces over the corresponding
fibers of the original vector bundle, the product in each fiber being
the Weyl-Moyal star product given by the Poisson tensor there: the
DFR-algebra is then obtained as the $C^*$-completion of the section
algebra of this Fr\'echet $*$-algebra bundle.
However, a geometrically more appealing interpretation would be as
the section algebra of some $C^*$-bundle, which should be obtained
directly from the underlying Fr\'echet $*$-algebra bundle by a
process of $C^*$-completion.
As it turns out, the concept of a $C^*$-completion at the level of
\emph{bundles} is novel, and one of the main goals we achieve in
this paper is to develop this new theory to the point needed and
then apply it to the situation at hand.

\vspace{1ex}

In somewhat more detail, the paper is organized as follows.

In Section~\ref{sec:*alg}, which is of a preliminary nature, we
gather a few known facts about the construction of $C^*$-completions
of a given $*$-algebra: provided that such completions exist at all,
they can be controlled in terms of the corresponding universal
enveloping $C^*$-algebra, which in particular provides a criterion
for deciding whether such a completion is unique.
Moreover, we notice that when the given $*$-algebra is embedded into
some $C^*$-algebra as a spectrally invariant subalgebra, then that
$C^*$-algebra is in fact its universal enveloping $C^*$-algebra.

In Section~\ref{sec:Halg}, we propose a new definition of ``the $C^*$-%
algebra of the canonical commutation relations'' (for systems with a
finite number of degrees of freedom) which we propose to call the
Heisenberg $C^*$-algebra: it comes in two variants, namely a non%
unital one, $\mathcal{E}_\sigma$, and a unital one, $\mathcal{H}_\sigma$,
obtained as the unique $C^*$-completions of certain Fr\'echet $*$-%
algebras $\mathcal{S}_\sigma$ and $\mathcal{B}_\sigma$, respectively.
These are simply the usual Fr\'echet spaces $\mathcal{S}$ of rapidly
decreasing smooth functions and $\mathcal{B}$ of totally bounded
smooth functions on a given finite-dimensional vector space,%
\footnote{A bounded smooth function on a (finite-dimensional) real
vector space is said to be \emph{totally bounded} if all of its
partial derivatives are bounded.}
equipped with the Weyl-Moyal star product induced by a (possibly
degenerate) bivector~$\sigma$, whose definition, in the unital case,
requires the use of oscillatory integrals as developed in Rieffel's
theory of strict deformation quantization~\cite{RDQ}.
The main advantage of this definition as compared to others that can be found
in the literature~\cite{MSTV,BG1} is that the representation theory of these
$C^*$-algebras corresponds precisely to the representation theory of the
Heisenberg group: as a result of uniqueness of the $C^*$-completion, there
is no need to restrict to a subclass of ``regular'' representations.

In Section~\ref{sec:*bun}, we present the core material of this paper.
We begin by introducing the concept of a bundle of locally convex $*$-%
algebras, which contains that of a $C^*$-bundle as a special case, and
following the approach of Dixmier, Fell and other authors~\cite{Dix,FD},
we show how the topology of the total space of any such bundle is tied
to its algebra of continuous sections.
Next, we pass to the $C^*$ setting, where we explore the notion of a
$C_0(X)$-algebra ($X$ being some fixed locally compact topological space).
At first sight, this appears to generalize the natural module structure
of the section algebra of a $C^*$-bundle over~$X$, but according to the
sectional representation theorem~\cite[Theorem~C.26, p.~367]{Wil}, it
actually provides a necessary and sufficient condition for a $C^*$-%
algebra to be the section algebra of a $C^*$-bundle over~$X$.
Here, we formulate a somewhat strengthened version of that theorem
which establishes a categorical equivalence between $C^*$-bundles
over~$X$ and $C_0(X)$-algebras.
Finally, we introduce the (apparently novel) concept of $C^*$-completion
of a bundle of locally convex $*$-algebras and show that, using this
(essentially fiberwise) definition and imposing appropriate conditions
on the behavior of sections at infinity, the two processes of completion
and of passing to section algebras commute: the $C^*$-completion of
the algebra of continuous sections with compact support of a bundle
of locally convex $*$-algebras is naturally isomorphic to the algebra
of continuous sections vanishing at infinity of its $C^*$-completion.

In Section~\ref{sec:DFRPois}, we combine the methods developed in the
previous two sections to construct, from an arbitrary Poisson vector
bundle $E$ over an arbitrary manifold~$X$, with Poisson tensor~$\sigma$,
two bundles of Fr\'echet $*$-algebras over~$X$, $\mathcal{S}(E,\sigma)$
and $\mathcal{B}(E,\sigma)$, as well as two $C^*$-bundles over~$X$,
$\mathcal{E}(E,\sigma)$ and $\mathcal{H}(E,\sigma)$, the latter being
the $C^*$-completions of the former with respect to the $C^*$ fiber
norms induced by the unique $C^*$-norms on each fiber, according to
the prescriptions of Section~\ref{sec:Halg}.
We propose to refer to these $C^*$-bundles as DFR-bundles and to the
corresponding section algebras as DFR-algebras, since we show that
the original DFR-algebra can be recovered as a special case, by an
appropriate and natural choice of Poisson vector bundle.
Moreover, that construction can be applied fiberwise to the tangent
spaces of any Lorentzian manifold to define a functor from the category
of Lorentzian manifolds (of fixed dimension) to that of $C^*$-algebras
which might serve as a starting point for a notion of ``locally covariant
quantum spacetime''.

\vspace{1ex}

The overall picture that emerges is that the constructions presented
in this paper establish a systematic method for producing a vast class
of examples of $C^*$-algebras provided with additional ingredients
that are tied up with structures from classical differential geo\-metry
and/or topology in a functorial manner.
To what extent this new class of examples can be put to good use
remains to be seen.
But we believe that even the original question of how to define the
classical limit of the DFR-algebra, or more generally how to handle
its space of states, will be deeply influenced by the generalization
presented here, which is of independent mathematical interest, going
way beyond the physical motivations of the original DFR paper.

\pagebreak

\section{\emph{C}$^*$-Completions of $*$-Algebras}
\label{sec:*alg}

As a preliminary step that will be needed for the constructions to
be presented later on, we want to discuss the question of existence
and uniqueness of the $C^*$-completion of a $*$-algebra (possibly
equipped with some appropriate locally convex topology of its own),
which is closely related to the concept of a spectrally invariant
subalgebra, as well as the issue of continuity of the inversion
map on the group of invertible elements.

We begin by recalling a general and well-known strategy for producing
$C^*$-norms on $*$-algebras. It starts from the observation that given
any $*$-algebra~$B$ and any $*$-representation $\rho$ of~$B$ on a
Hilbert space~$\mathfrak{H}_\rho$, we can define a $C^*$-seminorm
$\|.\|_\rho$ on~$B$ by taking the operator norm in~$B(\mathfrak{H}_\rho)$,
i.e., by setting, for any $\, b \in B$,
\begin{equation} \label{eq:OPSND} 
 \|b\|_\rho^{}~=~\|\rho(b)\|~.
\end{equation}
Obviously, this will be a $C^*$-norm if and only if $\rho$ is faithful.
More generally, given any set $R$ of $*$-representations of~$B$ such
that, for any $\, b \in B$, $\{ \|\rho(b)\| \, | \, \rho \in R \} \,$
is a bounded subset of $\mathbb{R}$, setting
\begin{equation} \label{eq:MAXCSTN} 
 \|b\|_R^{}~=~\sup_{\rho \in R} \|b\|_\rho^{}
\end{equation}
will define a $C^*$-seminorm on~$B$, which is even a $C^*$-norm
as soon as the set $R$ separates~$B$ (i.e., for any $\, b \in B
\setminus \{0\}$, there exists $\, \rho \in R \,$ such that
$\, \rho(b) \neq 0$).
Taking into account that every $C^*$-seminorm $s$ on $B$ is the operator
norm for some $*$-representation of~$B$ (this follows from applying the
Gelfand-Naimark theorem~\cite[Theorem 3.4.1]{MU} to the $C^*$-completion
of~$B/\ker s$, together with the fact that every faithful $C^*$-algebra
representation is automatically isometric~\cite[Theorem 3.1.5]{MU}), we
can take $R$ to be the set $\text{Rep}(B)$ of all $*$-representations
of~$B$ (up to equivalence) to obtain a $C^*$-seminorm on~$B$ which is
larger than any other one, provided that, for any $\, b \in B$,
\begin{equation} \label{eq:OPSNB} 
 \sup_{\rho \in \text{Rep}(B)} \|b\|_\rho^{}~<~\infty~.
\end{equation}
Moreover, when $\text{Rep}(B)$ separates~$B$, we obtain the well-known
\emph{maximal} $C^*$-norm on~$B$, which gives rise to the \emph{minimal
$C^*$-completion} of~$B$, also denoted by $C^*(B)$ and called the \emph%
{universal enveloping $C^*$-algebra} of~$B$ because it satisfies the
following universal property: for every $C^*$-algebra $C$, every
$*$-algebra homomorphism from~$B$ to~$C$ extends uniquely to a
$C^*$-algebra homomorphism from $C^*(B)$ to~$C$.

Next, given a $*$-algebra $B$ embedded in some $C^*$-algebra $A$ as
a dense $*$-subalgebra, one method for guaranteeing existence of the
universal enveloping $C^*$-algebra relies on the concept of spectral
invariance, which is defined as follows:  $B$ is said to be \emph%
{spectrally invariant} in~$A$ if, for every element of~$B$, its
spectrum in~$B$ is the same as its spectrum in~$A$.
(Note that, in general, the former contains the latter:
thus only the opposite inclusion is a nontrivial condition.%
\footnote{Here, we use that if $A$ and $B$ are $*$-algebras and
$\, \phi: B \to A \,$ is any $*$-algebra homomorphism, then for
any $b$ in~$B$, the spectrum $\sigma_B(b)$ of~$b$ in~$B$ contains
the spectrum $\sigma_A(\phi(b))$ of~$\phi(b)$ in~$A$ and hence we
have $\, r_A(\phi(b)) \leqslant r_B(b)$: this follows directly
from the definition of the spectrum of elements of $*$-algebras.
})
In this situation, we may conclude that, for any self-adjoint
element $b$ of~$B$,
\[
 \sup_{\rho \in \text{Rep}(B)} \|b\|_\rho^{}~\leqslant~r(b)~,
\]
where $r(b)$ denotes the spectral radius of~$b$ in~$B$, which
by hypothesis coincides with its spectral radius in $A$ and
hence (for self-adjoint~$b$) also with its $C^*$-norm in~$A$.
But this means that the $C^*$-norm in~$A$ is in fact the
maximal $C^*$-norm and hence that the $C^*$-algebra $A$
is precisely the universal enveloping $C^*$-algebra of~$B$:
$A = C^*(B)$.

As an example showing the usefulness of this concept, we note the following
\begin{thm} \label{thm:APID}
 Let\/ $A$ be a (nonunital) $C^*$-algebra, equipped with the standard
 partial \linebreak ordering induced by the cone\/~$A^+$ of positive
 elements, and let\/~$B$ be a spectrally invariant $*$-subalgebra of\/~$A$.
 Then\/ $A$ admits an approximate identity consisting of elements of\/~$B$,
 i.e., a directed set\/ $(e_\lambda)_{\lambda \in \Lambda}$ of elements\/
 $e_\lambda$ of\/~$B$ such that, in\/~$A$, $e_\lambda \geqslant 0$,
 $\|e_\lambda\| \leqslant 1$, $e_\lambda \leqslant e_\mu$ \linebreak
 if $\, \lambda \leqslant \mu \,$ and, for every $\, a \in A$,
 $\lim_\lambda \, e_\lambda a = a = \lim_\lambda \, a e_\lambda$.
\end{thm}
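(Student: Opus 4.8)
The plan is to run the classical construction of an approximate identity in a $C^*$-algebra, inserting an appeal to spectral invariance at each point where one needs a resolvent-type expression to lie in $B$ and not merely in $A$ or in its unitization $\widetilde{A}$. Recalling that spectral invariance is understood here for a \emph{dense} $*$-subalgebra, I would work with the set
\[
 \Lambda_B \;=\; \bigl\{\, b \in B \,:\, b \geqslant 0 \ \text{and}\ \|b\| < 1 \,\bigr\}
\]
(positivity and norm taken in $A$), ordered by the usual order $\leqslant$ on $A^+$ — a set that is nonempty whenever $A \neq 0$, since it contains $\,\varepsilon\,b^*b\,$ for any nonzero $\,b \in B\,$ and small $\,\varepsilon > 0$. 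The first step is to check that $\Lambda_B$ is directed. Given $\,b_1, b_2 \in \Lambda_B\,$ one forms, exactly as in the usual nonunital argument, $\,b_i' = b_i(1 - b_i)^{-1}\,$ and $\,c = (b_1' + b_2')\bigl(1 + b_1' + b_2'\bigr)^{-1}$; here spectral invariance enters twice, to see that $\,1 \notin \sigma_A(b_i) = \sigma_B(b_i)\,$ (since $\sigma_A(b_i) \subseteq [0,1)$) forces $(1 - b_i)^{-1} \in \widetilde{B}$, and that $\,b_1' + b_2' \geqslant 0\,$ forces $(1 + b_1' + b_2')^{-1} \in \widetilde{B}$. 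Since $B$ is a two-sided ideal in $\widetilde{B}$, both $b_i'$ and $c$ then land in $B$, hence in $\Lambda_B$, and $\,b_1, b_2 \leqslant c\,$ follows as usual from order-reversal of inversion on positive invertibles (applied to $\,1 + b_i' \leqslant 1 + b_1' + b_2'$).

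Next I would show that, for every positive $\,a \in B\,$ and every $\,\varepsilon > 0$, there is an element of $\Lambda_B$ that is ``$\varepsilon$-good'' for $a$ and remains good above it. One takes $\,b_0 = a\bigl(a + \tfrac1n\bigr)^{-1}$: spectral invariance ($-\tfrac1n \notin \sigma_A(a) = \sigma_B(a)$) gives $\bigl(a + \tfrac1n\bigr)^{-1} \in \widetilde{B}$, hence $\,b_0 \in B$, and clearly $\,0 \leqslant b_0 < 1$, so $\,b_0 \in \Lambda_B$. For any $\,b \in \Lambda_B\,$ with $\,b \geqslant b_0$, the operator inequalities $\,a(1-b)^2 a \leqslant a(1-b)a \leqslant a(1-b_0)a\,$ (valid because $\,0 \leqslant 1-b \leqslant 1\,$ and $a = a^*$) give
\[
 \|a - ba\|^2 \;=\; \bigl\| a(1-b)^2 a \bigr\| \;\leqslant\; \bigl\| a(1-b_0)a \bigr\| \;=\; \bigl\| a^2(1-b_0) \bigr\| \;=\; \tfrac1n\,\bigl\| a^2 \bigl(a + \tfrac1n\bigr)^{-1} \bigr\| ,
\]
the last two equalities using that $a$ commutes with $b_0$; the right-hand side is at most $\,\|a\|^2/(n\|a\|+1)$ and so tends to $0$ as $\,n \to \infty$. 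Combined with directedness of $\Lambda_B$, this yields $\,ba \to a\,$ along $\Lambda_B\,$ for every positive $\,a \in B$.

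Then I would remove the two restrictions in turn. Density of $B$ makes $\,B^+ = \{\, b \in B : b \geqslant 0 \,\}$ dense in $A^+$ (approximate $a^{1/2}$ by $\,d \in B\,$ and pass to $\,d^*d$), so a routine $3\varepsilon$-estimate upgrades $\,ba \to a\,$ to all positive $\,a \in A$; and for arbitrary $\,x \in A$, the identity $\;\|x - bx\|^2 = \|(1-b)\,x x^*\,(1-b)\| \leqslant \|(1-b)(xx^*)\|\;$ (legitimate since $\|1-b\| \leqslant 1$), applied to $\,xx^* \in A^+$, gives $\,bx \to x$, whence $\,xb \to x\,$ by taking adjoints. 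Since the members of $\Lambda_B$ are by construction positive, of norm $\,\leqslant 1$, contained in $B$, and ordered by $\leqslant$, the net $\,(e_\lambda)_{\lambda \in \Lambda_B}\,$ with $\,e_\lambda = \lambda\,$ is exactly the approximate identity claimed.

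The step I expect to be the real content — everything else being standard $C^*$-algebra bookkeeping (continuous functional calculus, the implication $\,0 \leqslant X \leqslant Y \Rightarrow a^*Xa \leqslant a^*Ya$, order-reversal of inversion) together with density of $B$ for the two final reductions — is the repeated use of spectral invariance: it is precisely what forces the functional-calculus expressions $(1-b_i)^{-1}$, $(1+b_1'+b_2')^{-1}$ and $(a+\tfrac1n)^{-1}$, which a priori make sense only in $A$ or $\widetilde{A}$, to actually belong to $\widetilde{B}$, so that the classical construction can be carried out entirely within $B$.
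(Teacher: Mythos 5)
Your proof is correct and follows essentially the same route as the paper: the paper's proof is just the classical approximate-identity construction (Inoue's theorem, cited via Fragoulopoulou, Theorem~11.5), adapted so that spectral invariance replaces the dense-ideal hypothesis, which is precisely what you carry out. Your write-up inserts spectral invariance at exactly the points the paper identifies as the crucial ingredient, namely to place the resolvent-type elements $(1-b_i)^{-1}$, $(1+b_1'+b_2')^{-1}$ and $\bigl(a+\tfrac1n\bigr)^{-1}$ in $\widetilde{B}$, after which the standard $C^*$-algebra argument goes through unchanged.
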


\noindent
The proof is an easy adaptation of that of a similar theorem due to
Inoue, in the context of locally $C^*$-algebras, for which we refer
the reader to~\cite[Theorem~11.5]{Fra}; we note here that the version
given above can also be generalized to locally $C^*$-algebras without
additional effort. The main difference is that we assume $B$ to be
just a dense $*$-subalgebra, rather than a dense $*$-ideal, and
spectral invariance turns out to be the crucial ingredient to
make the proof work.

Once the existence of the universal enveloping $C^*$-algebra $C^*(B)$
of~$B$ is settled~-- usually by realizing it explicitly as a spectrally
invariant $*$-subalgebra of a given $C^*$-algebra~$A$~-- we can address
the question of classifying \emph{all} possible $C^*$-norms on~$B$,
which correspond to the possible $C^*$-seminorms on~$A$ and are
characterized by their respective kernels: these are ideals in~$A$
that intersect $B$ trivially.%
\footnote{In this paper, where we deal exclusively with topological
$*$-algebras, ``ideal'' will mean ``closed two-sided $*$-ideal'',
unless explicitly stated otherwise.\label{fn:IDEAL}}
Thus if we can determine what are the ideals in~$A$ and prove that none
of them intersects $B$ trivially, we can conclude that~$B$ admits one
and only one $C^*$-norm.

Finally, it is worth noting that in many cases of interest, $B$ will not
be merely a $*$-algebra but will come equipped with a (locally convex)
topology of its own, with respect to which it is complete.
Within this context, we have the following result which will
become useful later on:
\begin{prp} \label{prp:INVER}
 Let~$B$ be a Fr\'echet $*$-algebra, i.e., a $*$-algebra which is also a
 Fr\'echet space such that multiplication and involution are continuous,
 and assume that $B$ is continuously embedded in some $C^*$-algebra~$A$
 as a spectrally invariant $*$-subalgebra. Then the group~$G_B$ of
 invertible elements of~$B$ is open and the inversion map
 \[
  \begin{array}{ccc}
   G_B & \longrightarrow &  G_B  \\[1mm]
    b  &   \longmapsto   & b^{-1}
  \end{array}
 \]
 is continuous not only in the induced $C^*$-topology but also in the
 Fr\'echet topology of~$B$.
\end{prp}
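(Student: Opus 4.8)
\emph{Proof idea.}
The plan is to strip off two routine reductions and then confront a single local-boundedness statement about inversion near the identity, which I will settle by exploiting the completeness of~$B$ through a Baire-category argument. To begin with, the non-unital case is vacuous, since a non-unital algebra has no invertible elements; so I may pass to unitizations and assume $B$, and hence~$A$, unital with a common identity~$1$, the hypothesis being inherited because $\sigma_{B^{+}}(b + \lambda 1) = \lambda + \sigma_B(b) = \lambda + \sigma_A(b) = \sigma_{A^{+}}(b + \lambda 1)$. In the unital setting spectral invariance says exactly that $G_B = B \cap G_A$, because $b \in B$ is invertible in~$B$ iff $0 \notin \sigma_B(b) = \sigma_A(b)$ iff $b$ is invertible in~$A$. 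As the inclusion $B \hookrightarrow A$ is continuous, $\|\cdot\|_A$ restricts to a continuous norm on~$B$, so $U := \{\, b \in B : \|1 - b\|_A < 1 \,\}$ is a neighbourhood of~$1$ in the Fréchet topology of~$B$, and $U \subseteq G_A \cap B = G_B$ by the Neumann series in the Banach algebra~$A$. Since multiplication on~$B$ is continuous, left translation by a fixed element of~$G_B$ is a homeomorphism of~$B$ carrying $G_B$ onto~$G_B$, whence $G_B = \bigcup_{b_0 \in G_B} b_0 U$ is open; this settles the first assertion and moreover exhibits $G_B$ as a metrizable group with jointly continuous multiplication. Continuity of inversion in the induced $C^{*}$-topology is then immediate, being the restriction to~$G_B$ of the norm-continuous inversion of the $C^{*}$-algebra~$A$.

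For continuity of $b \mapsto b^{-1}$ in the Fréchet topology, the standard translation argument reduces everything to continuity at~$1$: if $b_k \to b$ in~$B$, then $b^{-1} b_k \to 1$, continuity at~$1$ gives $b_k^{-1} b \to 1$, and continuity of multiplication then gives $b_k^{-1} = (b_k^{-1} b)\, b^{-1} \to b^{-1}$; metrizability of~$B$ makes the reduction to sequences legitimate. Fix a defining sequence $(p_n)$ of seminorms for~$B$ that is submultiplicative up to a shift of index, $p_n(xy) \leqslant p_{n+1}(x)\, p_{n+1}(y)$, with $\|\cdot\|_A$ dominated by~$p_0$. For $b = 1 - a \in U$ the elementary identity $(1 - a)^{-1} - 1 = a\,(1 - a)^{-1}$ gives
\[
  p_n\!\bigl( (1 - a)^{-1} - 1 \bigr) \;\leqslant\; p_{n+1}(a)\; p_{n+1}\!\bigl( (1 - a)^{-1} \bigr) ,
\]
so it is enough to prove the \emph{local boundedness of inversion near~$1$}: for each~$n$ there are a neighbourhood $V_n$ of~$0$ in~$B$ and a constant $M_n$ with $p_n\bigl( (1-a)^{-1} \bigr) \leqslant M_n$ for all $a \in V_n$. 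Granting this and choosing $a \in V_{n+1}$ with $p_{n+1}(a) < \varepsilon / M_{n+1}$ yields $p_n\bigl( (1-a)^{-1} - 1 \bigr) < \varepsilon$, i.e.\ continuity at~$1$.

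The local-boundedness statement is the heart of the proof, and its difficulty is precisely the mismatch between the two topologies on~$B$: in the coarse $C^{*}$-norm topology the bound is trivial, since $\|(1-a)^{-1}\|_A \leqslant (1 - \|a\|_A)^{-1}$, but the Neumann series $\sum_k a^k$ need not converge in the finer Fréchet topology, so the bound cannot simply be transferred. Here is where completeness of~$B$ enters. Being open in the completely metrizable space~$B$, the group $G_B$ is itself completely metrizable, hence a Baire space, and it carries a group structure with jointly continuous multiplication, that is, it is a \emph{paratopological group}. I would then invoke the classical fact — a Baire-category theorem in the tradition of Montgomery — that a Baire, metrizable paratopological group is automatically a topological group, which delivers continuity of inversion on~$G_B$, and hence the desired local bound; alternatively one reproves this in the case at hand by a direct Baire-category argument on~$G_B$, using in addition that inversion is continuous in the coarser $C^{*}$-topology and that the graph $\{\, (b, b^{-1}) : b \in G_B \,\}$ is closed in $B \times B$ (an immediate consequence of joint continuity of multiplication). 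I expect this Baire-category step — the controlled passage from the $C^{*}$-topology up to the Fréchet topology — to be the only genuine obstacle; the two reductions above are routine.
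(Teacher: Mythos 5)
Your argument is correct and essentially the paper's own: openness of $G_B = B \cap G_A$ comes from spectral invariance plus continuity of the embedding, exactly as in the paper, and your concluding Baire-category step (a Baire metrizable paratopological group is a topological group) is precisely the general automatic-continuity result that the paper delegates to the Arens--Banach theorem and to Pfister's paper, so the reductions to unitization, continuity at~$1$ and local boundedness are just routine packaging around the same core. One remark: since $G_B$ is open in a Fr\'echet space it is completely metrizable, hence \v{C}ech-complete, so you can invoke the completely metrizable/\v{C}ech-complete version of that theorem (Brand, Pfister), which is the safely citable form, rather than the sharper ``Baire plus metrizable'' formulation.
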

\begin{proof}
 The statement of this proposition is well-known for the $C^*$-topology,
 but that it also holds for the finer Fr\'echet topology is far from obvious,
 as can be inferred from the extensive discussion of concepts related to
 this question that can be found in the literature, such as that of
 ``Q-algebras'' and of ``topological algebras with inverses''; see
 \cite[Chapter~1, Section~6]{Fra}, \cite[Chapter~3, Section~6]{Bal}
 and references therein.
 In the present context, spectral invariance guarantees that $G_B$ is
 equal to $B \cap G_A$, i.e., it is the inverse image of $G_A$, which
 is open in~$A$, under the inclusion map $\, B \hookrightarrow A$,
 which by hypothesis is continuous. Continuity of the inversion map
 then follows from the Arens-Banach theorem~\cite[Theorem~3.6.16]{Bal}
 or from a more general direct argument~\cite{Pfi}.
\end{proof}

\section{The Heisenberg \emph{C}$^*$-Algebra for \\
         Poisson Vector Spaces}
\label{sec:Halg}

Let $V$ be a Poisson vector space, i.e., a real vector space of
dimension~$n$, say, equipped with a fixed bivector $\sigma$ of
rank $2r$; in other words, the dual $V^*$ of $V$ is a
presymplectic vector space.%
\footnote{Note that we do \emph{not} require $\sigma$ to be
nondegenerate.}
It gives rise to an $(n+1)$-dimensional Lie algebra
$\mathfrak{h}_{\sigma}$ which is a one-dimensional
central extension of the abelian Lie algebra~$V^*$
defined by the cocycle~$\sigma$ and will be called
the \emph{Heisenberg algebra} or, more precisely,
\emph{Heisenberg Lie algebra} (associated to $V^*$
and~$\sigma$): as a vector space, $\mathfrak{h}_\sigma
= V^* \oplus \mathbb{R}$, with commutator given by
\begin{equation} \label{eq:HACOMM}
 [(\xi,\lambda),(\eta,\mu)]~=~(0 \,,\, \sigma(\xi,\eta))
 \qquad \mathrm{for}~~\xi,\eta \in V^* \,,\,
 \lambda,\mu \in \mathbb{R}~.
\end{equation}
Associated with this Lie algebra is the \emph{Heisenberg group}
or, more precisely, \emph{Heisenberg Lie group}, $H_\sigma$: as
a manifold, $H_\sigma = V^* \times \mathbb{R}$, with product
given by
\begin{equation} \label{eq:HGPROD}
 (\xi,\lambda) \, (\eta,\mu)~=~
 \bigl( \xi+\eta \,,\,
        \lambda + \mu - {\textstyle \frac{1}{2}} \, \sigma(\xi,\eta) \bigr)
 \qquad \mathrm{for}~~\xi,\eta \in V^\ast \,,\,
                      \lambda,\mu \in \mathbb{R}~.
\end{equation}

In what follows, we shall discuss various forms of giving a precise
mathematical meaning to the concept of a \emph{representation of the
canonical commutation relations} defined by~$\sigma$.
From the very beginning, we shall restrict ourselves to representations
that can be brought into \emph{Weyl form}, i.e., that correspond to
strongly continuous unitary representations $\pi$ of the Heisenberg
group $H_\sigma$: abbreviating $\pi(\xi,0)$ to $\pi(\xi)$, these
relations can be written in the form
\begin{equation} \label{eq:CCR1}
 \pi(\xi) \, \pi(\eta)~
 =~e^{-\frac{i}{2} \sigma(\xi,\eta)} \, \pi(\xi+\eta)~.
\end{equation}
At the infinitesimal level, they correspond to representations $\dot{\pi}$
of the Heisenberg algebra~$\mathfrak{h}_\sigma$ which are often called
``regular'': according to Nelson's theorem~\cite{NE}, these are precisely
the representations of~$\mathfrak{h}_\sigma$ by essentially skew adjoint
operators on a common dense invariant domain of analytic vectors.

Our main goal in this section is to use these representations of the
canonical commutation relations to construct what we shall call the
\emph{Heisenberg $C^*$-algebra}: more precisely, this algebra comes
in two versions, namely a nonunital one and a unital one, denoted
here by $\mathcal{E}_\sigma$ and by $\mathcal{H}_\sigma$, respectively:
as it turns out, the latter is simply the multiplier algebra of the
former.
We emphasize that our construction differs substantially from
previous ones that can be found in the literature, such as the
Weyl algebra of Refs~\cite{MA,MSTV} or the resolvent algebra of
Ref.~\cite{BG1}: both of those use the method of constructing a
$C^*$-algebra from an appropriate set of generators and relations.
Instead, we focus on certain Fr\'echet $*$-algebras that play a central
role in Rieffel's theory of strict deformation quantization~\cite{RDQ}
and show that each of these admits a unique $C^*$-norm, so it has a
unique $C^*$-completion.
For further comments, we refer the reader to the end of this section.



\subsection{The Heisenberg-Schwartz and Heisenberg-Rieffel Algebras}

Throughout this paper, given any (finite-dimensional) real vector space~$W$,
say, we denote by $\mathcal{S}(W)$ the Schwartz space of rapidly decreasing
smooth functions on~$W$ and by $\mathcal{B}(W)$ the space of totally bounded
smooth functions on~$W$.\footnotemark[2]

To begin with, we want to briefly recall how one can use the bivector~%
$\sigma$ to introduce a new product on the space $\mathcal{S}(V)$ which
is a deformation of the standard pointwise product, commonly known as
the \emph{Weyl-Moyal star product}, and will then comment on how that
deformed product can be extended to the space~$\mathcal{B}(V)$.

We start by noting that given any strongly continuous unitary
representation $\pi$ of the Heisenberg group~$H_\sigma$ on some
Hilbert space~$\mathfrak{H}_\pi$, we can construct a continuous
linear map
\begin{equation} \label{eq:WQUANT1}
 \begin{array}{cccc}
  W_\pi: & \mathcal{S}(V) & \longrightarrow & B(\mathfrak{H}_\pi) \\[1mm]
         &       f        &   \longmapsto   &      W_\pi f
 \end{array}
\end{equation}
from $\mathcal{S}(V)$ to the space of bounded linear operators on~%
$\mathfrak{H}_\pi$, called the \emph{Weyl quantization map}, by setting
\begin{equation} \label{eq:WQUANT2}
 W_\pi f~=~\int_{V^*} d\xi~\check{f}(\xi) \; \pi(\xi)~,
\end{equation}
which is to be compared with
\begin{equation} \label{eq:FOURT1}
 f(x)~=~\int_{V^*} d\xi~\check{f}(\xi) \; e^{i\langle\xi,x\rangle}~,
\end{equation}
where $\check{f}$ is the inverse Fourier transform of~$f$,
\begin{equation} \label{eq:FOURT2}
 \check{f}(\xi)~\equiv~(\mathcal{F}^{-1} f)(\xi)~
                =~\frac{1}{(2\pi)^n} \;
                  \int_V dx~f(x) \; e^{-i\langle\xi,x\rangle}~.
\end{equation}
Note that equation~(\ref{eq:WQUANT2}) should be understood as stating
that, for every vector $\psi$ in $\mathfrak{H}_\pi$, we have
\[
 (W_\pi f) \psi~=~\int_{V^*} d\xi~\check{f}(\xi) \; \pi(\xi) \psi~,
\]
since it is this integral that makes sense as soon as $\pi$ is strongly
continuous; then it is obvious that $\, W_\pi f \in B(\mathfrak{H}_\pi)$,
with $\, \| W_\pi f \| \leqslant \| \check{f} \|_1^{}$, where $\| . \|_1^{}$
is the $L^1$-norm on~$\mathcal{S}(V^*)$ which, as shown in the appendix
(see equation~(\ref{eq:L1EST})), can be estimated in terms of a suitable
Schwartz seminorm of~$f$:
\begin{equation} \label{eq:SCHWEST1}
 \| W_\pi f \|~\leqslant~\| \check{f} \|_1^{}~\leqslant~
 (2\pi)^n \; \sum_{|\alpha|,|\beta| \leqslant 2n} \, \sup_{x \in V} \,
 |x^\alpha \, \partial_\beta^{} f (x)|~.
\end{equation}
Moreover, an explicit calculation shows that, independently of the choice
of~$\pi$, we have, for $\, f,g \in \mathcal{S}(V)$,
\begin{equation} \label{eq:WMOY1}
 W_\pi f~W_\pi g~=~W_\pi (f \star_\sigma g)~,
\end{equation}
where $\star_\sigma$ denotes the \emph{Weyl-Moyal star product} of
$f$ and~$g$, which is given by any one of the following two twisted
convolution integrals:
\begin{equation} \label{eq:WMOY2}
 (f \star_\sigma g)(x)~=~\int_{V^*} d\xi~e^{i\langle\xi,x\rangle}
 \int_{V^*} d\eta~\check{f}(\eta) \, \check{g}(\xi-\eta) \;
 e^{\frac{i}{2} \sigma(\xi,\eta)}~.
\end{equation}
\begin{equation} \label{eq:WMOY3}
 (f \star_\sigma g)(x)~=~\int_{V^*} d\xi~e^{i\langle\xi,x\rangle}
 \int_{V^*} d\eta~\check{f}(\xi-\eta) \, \check{g}(\eta) \;
 e^{-\frac{i}{2} \sigma(\xi,\eta)}~.
\end{equation}
The proof is a simple computation (we omit the $\psi$):
\begin{eqnarray*}
 W_\pi f~W_\pi g \!\!
 &=&\!\! \int_{V^*} d\eta \int_{V^*} d\zeta~\check{f}(\eta) \, \check{g}(\zeta)~
         \pi(\eta) \, \pi(\zeta) \\[2mm]
 &=&\!\! \int_{V^*} d\eta \int_{V^*} d\zeta~\check{f}(\eta) \, \check{g}(\zeta)
         \; e^{-\frac{i}{2} \sigma(\eta,\zeta)} \, \pi(\eta+\zeta) \\[2mm]
 &=&\!\! \int_{V^*} d\eta \int_{V^*} d\xi~\check{f}(\eta) \,
         \check{g}(\xi-\eta) \; e^{-\frac{i}{2} \sigma(\eta,\xi)} \, \pi(\xi)
 \\[2mm]
 &=&\!\! \int_{V^*} d\xi \int_{V^*} d\eta~\check{f}(\eta) \,
         \check{g}(\xi-\eta) \; e^{\frac{i}{2} \sigma(\xi,\eta)} \, \pi(\xi)
 \\[2mm]
 &=&\!\! \int_{V^*} d\xi~\mathcal{F}^{-1}(f \star_\sigma g)(\xi) \, \pi(\xi)
 \\[3mm]
 &=&\!\! W_\pi (f \star_\sigma g)~.
\end{eqnarray*}

\noindent
For the sake of comparison, we note an alternative form of
this product~\cite{RDQ}: using the ``musical homomorphism''
$\, \sigma^\sharp: V^* \longrightarrow V \,$ induced by
$\sigma$ (i.e., $\langle \xi , \sigma^\sharp \eta \rangle
= \sigma(\eta,\xi)$), we get

\pagebreak

\begin{eqnarray*}
\lefteqn{(f \star_\sigma g)(x)~
         =~\int_{V^*} d\xi~e^{i\langle\xi,x\rangle} \int_{V^*} d\eta~
           \check{f}(\eta) \, \check{g}(\xi-\eta) \;
           e^{\frac{i}{2} \sigma(\xi,\eta)}} \hspace*{5mm} \\[2mm]
 &=& \frac{1}{(2\pi)^{2n}} \,
     \int_{V^*} d\xi~e^{i\langle\xi,x\rangle} \, \int_{V^*} d\eta \,
     \int_V dv~f(v) \; e^{-i\langle \eta,v \rangle} \,
     \int_V dw~g(w) \; e^{-i\langle \xi-\eta,w \rangle} \,
     e^{-\frac{i}{2} \langle\xi,\sigma^\sharp \eta\rangle} \\[2mm]
 &=& \frac{1}{(2\pi)^n} \,
     \int_{V^*} d\eta \, \int_{V} dv~f(v) \, \int_{V} dw~g(w)
     \left( \int_{V^*} \frac{d\xi}{(2\pi)^n}~
            e^{-i\langle \xi , w-(x-\frac{1}{2} \sigma^\sharp \eta) \rangle}
            \right) e^{i\langle \eta,w-v \rangle} \\[2mm]
 &=& \frac{1}{(2\pi)^n} \, \int_{V^*} d\eta \, \int_V dv~
     f(v) \, g(x-{\textstyle \frac{1}{2}} \sigma^\sharp \eta) \,
     e^{i\langle \eta,x-\frac{1}{2} \sigma^\sharp \eta-v \rangle}~,
\end{eqnarray*}
and similarly
\begin{eqnarray*}
\lefteqn{(f \star_\sigma g)(x)~
         =~\int_{V^*} d\xi~e^{i\langle\xi,x\rangle} \int_{V^*} d\eta~
           \check{f}(\xi-\eta) \, \check{g}(\eta) \;
           e^{-\frac{i}{2} \sigma(\xi,\eta)}} \hspace*{5mm} \\[2mm]
 &=& \frac{1}{(2\pi)^{2n}} \,
     \int_{V^*} d\xi~e^{i\langle\xi,x\rangle} \, \int_{V^*} d\eta \,
     \int_V dv~f(v) \; e^{-i\langle \xi-\eta,v \rangle} \,
     \int_V dw~g(w) \; e^{-i\langle \eta,w \rangle} \,
     e^{\frac{i}{2} \langle\xi,\sigma^\sharp \eta\rangle} \\[2mm]
 &=& \frac{1}{(2\pi)^n} \,
     \int_{V^*} d\eta \, \int_{V} dv~f(v) \, \int_{V} dw~g(w)
     \left( \int_{V^*} \frac{d\xi}{(2\pi)^n}~
            e^{-i\langle \xi , v-(x+\frac{1}{2} \sigma^\sharp \eta) \rangle}
            \right) e^{i\langle \eta,v-w \rangle} \\[2mm]
 &=& \frac{1}{(2\pi)^n} \, \int_{V^*} d\eta \, \int_V dw~
     f(x+{\textstyle \frac{1}{2}} \sigma^\sharp \eta) \, g(w) \,
     e^{i\langle \eta,x+\frac{1}{2} \sigma^\sharp \eta-w \rangle}~,
\end{eqnarray*}
i.e., after a change of variables $\, v \rightarrow u = v-x$,
$\, \eta \rightarrow \xi = \eta/2\pi \,$ in the first case and
$\, w \rightarrow v = w-x$, $\, \eta \rightarrow \xi = - \eta/2
\pi \,$ in the second case,
\begin{equation} \label{eq:WMOY4}
 (f \star_\sigma g)(x)~
 =~\int_{V^*} d\xi \, \int_V du~f(x+u) \, g(x - \pi \sigma^\sharp \xi) \;
   e^{-2\pi i \langle \xi, u \rangle}~.
\end{equation}
\begin{equation} \label{eq:WMOY5}
 (f \star_\sigma g)(x)~
 =~\int_{V^*} d\xi \, \int_V dv~f(x - \pi \sigma^\sharp \xi) \, g(x+v) \;
   e^{2\pi i \langle \xi,v \rangle}~.
\end{equation}
At any rate, with respect to the Weyl-Moyal star product, together
with the standard involution of pointwise complex conjugation and
the standard Fr\'echet topology, the space $\mathcal{S}(V)$ becomes
a Fr\'echet $*$-algebra, which we shall denote by $\mathcal{S}_\sigma$
and call the \emph{Heisenberg-Schwartz algebra} (with respect to $\sigma$).%
\footnote{Continuity of the Weyl-Moyal star product with respect to
the standard Fr\'echet topology on~$\mathcal{S}(V)$ is well-known
and is an immediate consequence of Proposition~\ref{prp:SCHWEST}
in the appendix.}

Dealing with the Weyl-Moyal star product between two functions in
$\mathcal{B}(V)$, rather than $\mathcal{S}(V)$, is substantially
more complicated.
In this case, its definition is based on equation~(\ref{eq:WMOY4})
or equation~(\ref{eq:WMOY5}), whose rhs has to be interpreted as an
oscillatory integral on~$V^* \times V$.
Fortunately, all of the necessary analytic tools have been provided by
Rieffel~\cite{RDQ} (with the identification $\, J = - \pi \sigma^\sharp$),
so we may just state, as one of the results, that with respect to the
Weyl-Moyal star product, together with the standard involution of point%
wise complex conjugation and the standard Fr\'echet topology, the space
$\mathcal{B}(V)$ becomes a Fr\'echet $*$-algebra, which we shall denote
by $\mathcal{B}_\sigma$ and propose to call the \emph{Heisenberg-Rieffel
algebra} (with respect to $\sigma$).

We note in passing that both algebras are noncommutative when $\, \sigma
\neq 0$, but their deviation from commutativity is explicitly controlled
by a simple formula:
\begin{equation} \label{eq:NONCOM}
 g \star_\sigma f~=~f \star_{-\sigma} g~.
\end{equation}

Returning to explicit integral formulas, we note next that an intermediate
situation, which will be of particular interest in what follows, occurs
when one factor belongs to $\mathcal{B}(V)$ while the other belongs to
$\mathcal{S}(V)$, since we may perform a change of variables \linebreak
$\, u \rightarrow y = u+x$, $\, \xi \rightarrow \eta = 2\pi \xi \,$
(and rename $\eta$ to $\xi$) to rewrite equation~(\ref{eq:WMOY4})
in the form
\begin{equation} \label{eq:WMOY6}
 (f \star_\sigma g)(x)~
 =~\int_{V^*} d\xi~\check{f}(\xi) \,
   g(x - {\textstyle \frac{1}{2}} \sigma^\sharp \xi) \;
   e^{i \langle \xi, x \rangle}~,
\end{equation}
and similarly, we may perform a change of variables $v \rightarrow y = x+v$,
$\, \xi \rightarrow \eta = - 2\pi \xi \,$ (and rename $\eta$ to $\xi$) to
rewrite equation~(\ref{eq:WMOY5}) in the form
\begin{equation} \label{eq:WMOY7}
 (f \star_\sigma g)(x)~
 =~\int_{V^*} d\xi~f(x + {\textstyle \frac{1}{2}} \sigma^\sharp \xi) \,
   \check{g}(\xi) \; e^{i \langle \xi,x \rangle}~.
\end{equation}
Note that the expression in equation~(\ref{eq:WMOY6}) makes sense
when $\, f \in \mathcal{S}(V)$, $g \in \mathcal{B}(V)$ and similarly, 
the expression in equation~(\ref{eq:WMOY7}) makes sense when $\, f
\in \mathcal{B}(V)$, $g \in \mathcal{S}(V)$: both are then ordinary
integrals that become iterated integrals when the expression~%
(\ref{eq:FOURT2}) for the inverse Fourier transform is written
out explicitly.
(Obviously, the two formulae can be converted into each other by
means of equation~(\ref{eq:NONCOM}).)
Moreover, it follows from elementary estimates which can be found
in the appendix (see Proposition~\ref{prp:SCHWEST}) that, in either
case, $f \star_\sigma g \in \mathcal{S}(V)$, and the linear operators
\begin{equation} \label{eq:LREGR0}
 \begin{array}{cccc}
  L_\sigma f: & \mathcal{S}_\sigma & \longrightarrow & \mathcal{S}_\sigma
  \\[1mm]
              &        h           &   \longmapsto   &  f \star_\sigma h
 \end{array}
\end{equation}
of left translation by $\, f \in \mathcal{B}_\sigma \,$ and
\begin{equation} \label{eq:RREGR0}
 \begin{array}{cccc}
  R_\sigma g: & \mathcal{S}_\sigma & \longrightarrow & \mathcal{S}_\sigma
  \\[1mm]
              &        h           &   \longmapsto   &  h \star_\sigma g
 \end{array}
\end{equation}
of right translation by $\, g \in \mathcal{B}_\sigma \,$ are continuous
in the Schwartz topology.
In particular, $\mathcal{S}_\sigma$ is a two-sided $*$-ideal in~%
$\mathcal{B}_\sigma$ (but note that $\mathcal{S}_\sigma$ is not closed
in~$\mathcal{B}_\sigma$); see~\cite[Chapter~3]{RDQ} for more details.
Thus we get a $*$-homomorphism
\begin{equation} \label{eq:MULTA1}
 \begin{array}{ccc}
  \mathcal{B}_\sigma & \longrightarrow &  M(\mathcal{S}_\sigma) \\[1mm]
          f         &   \longmapsto   & (L_\sigma f,R_\sigma f) 
 \end{array}
\end{equation}
which provides an embedding of $\mathcal{B}_\sigma$ into what might be
called the multiplier algebra $M(\mathcal{S}_\sigma)$ of~$\mathcal{S}_\sigma$.
However, we have refrained from using this terminology since there is no
established definition of the concept of multiplier algebra beyond the
realm of Banach algebras: there are ``a priori'' many possible candidates
for its locally convex topology.%
\footnote{This generic statement does not exclude the existence of
special cases where the ``most natural'' ones among these topologies
coincide, as happens in the case of~$M(\mathcal{S}_\sigma)$ when
$\sigma$ is nondegenerate~\cite{VGB}.}
Of course, this ambiguity will no longer be a problem as soon as we pass
to the $C^*$-completions.

\subsection{Existence of $C^*$-norms}

The Fr\'echet algebras $\mathcal{S}_\sigma$ and $\mathcal{B}_\sigma$
both admit various norms. The naive choice would be the standard
sup norm, but this is a $C^*$-norm for the usual pointwise product,
not for the Weyl-Moyal star product. Hence the first question is
whether there exist $C^*$-norms on~$\mathcal{S}_\sigma$ and on
$\mathcal{B}_\sigma$ at all.
Fortunately, the answer is affirmative: it suffices to take the
operator norm in the regular representation.
More precisely, consider the $*$-representation
\begin{equation} \label{eq:LREGR1}
 \begin{array}{cccc}
  L_\sigma: & \mathcal{S}_\sigma & \longrightarrow & B(L^2(V)) \\[1mm]
            &         f          &   \longmapsto   & L_\sigma f
 \end{array}
\end{equation}
of~$\mathcal{S}_\sigma$, which extends to a $*$-representation
\begin{equation} \label{eq:LREGR2}
 \begin{array}{cccc}
  L_\sigma: & \mathcal{B}_\sigma & \longrightarrow & B(L^2(V)) \\[1mm]
            &         f          &   \longmapsto   & L_\sigma f
 \end{array}
\end{equation}
of~$\mathcal{B}_\sigma$, both defined by taking the operator $\, L_\sigma f:
L^2(V) \longrightarrow L^2(V) \,$ to be the unique continuous linear
extension of the operator $\, L_\sigma f: \mathcal{S}(V) \longrightarrow
\mathcal{S}(V) \,$ of equation~(\ref{eq:LREGR0}).
Similarly, we may also consider the (anti-)$*$-representation
\begin{equation} \label{eq:RREGR1}
 \begin{array}{cccc}
  R_\sigma: & \mathcal{S}_\sigma & \longrightarrow & B(L^2(V))  \\[1mm]
            &         g          &   \longmapsto   & R_\sigma g
 \end{array}
\end{equation}
of~$\mathcal{S}_\sigma$, which extends to an (anti-)$*$-representation
\begin{equation} \label{eq:RREGR2}
 \begin{array}{cccc}
  R_\sigma: & \mathcal{B}_\sigma & \longrightarrow & B(L^2(V))  \\[1mm]
            &         g          &   \longmapsto   & R_\sigma g
 \end{array}
\end{equation}
of~$\mathcal{B}_\sigma$, both defined by taking the operator $\, R_\sigma g:
L^2(V) \longrightarrow L^2(V) \,$ to be the unique continuous linear
extension of the operator $\, R_\sigma g: \mathcal{S}(V) \longrightarrow
\mathcal{S}(V) \,$ of equation~(\ref{eq:RREGR0}). \linebreak
Obviously, any $L_\sigma f$ commutes with any $R_\sigma g$: this
is nothing but associativity of the star product.
Of~course, this construction presupposes that the operators $L_\sigma f$
of equation~(\ref{eq:LREGR0}) (and analogously, the operators $R_\sigma g$
of equation~(\ref{eq:RREGR0})) are continuous not only in the Schwartz
topology but also in the $L^2$-norm.
Moreover, we need to show that the linear maps $L_\sigma$ in
equations~(\ref{eq:LREGR1}) and~(\ref{eq:LREGR2}) (and analogously, the
linear maps $R_\sigma$ in equations~(\ref{eq:RREGR1}) and~(\ref{eq:RREGR2}))
are continuous with respect to the appropriate topologies.
And finally, we want these continuity properties to hold
locally uniformly when we vary~$\sigma$.
Fortunately, all these statements can be derived from a single estimate,
as we explain in what follows.

First, consider the case when $f$ belongs to~$\mathcal{S}(V)$: then we can
rewrite equation~(\ref{eq:WMOY6}) in the form of equation~(\ref{eq:WQUANT2}),
since
\begin{equation}
 L_\sigma f~=~\int_{V^*} d\xi~\check{f}(\xi) \; \pi^{\mathrm{reg}}(\xi)~,
\end{equation}
where $\pi^{\mathrm{reg}}$ is the regular representation of the Heisenberg group
$H_\sigma$, that is, the strongly continuous unitary representation of~$H_\sigma$
on the Hilbert space~$L^2(V)$ defined by setting
\begin{equation}
 (\pi^{\mathrm{reg}}(\xi) \psi)(x)~
 =~e^{i \langle \xi, x \rangle} \,
   \psi(x - {\textstyle \frac{1}{2}} \sigma^\sharp \xi)~,
\end{equation}
i.e., $\pi^{\mathrm{reg}}(\xi)$ is the operator of translation by~%
$-\frac{1}{2} \sigma^\sharp \xi$ followed by that of multiplication
with the phase function $e^{i \langle \xi, \,.\, \rangle}$.
As before, it follows that $\, L_\sigma f \in B(L^2(V))$, with
$\, \| L_\sigma f \| \leqslant \| \check{f} \|_1^{}$, \linebreak where
$\| . \|_1^{}$ is the $L^1$-norm on~$\mathcal{S}(V^*)$ which, as shown in
the appendix (see equation~(\ref{eq:L1EST})), can be estimated in terms of
a suitable Schwartz seminorm of~$f$:
\begin{equation} \label{eq:LREGR4}
 \| L_\sigma f \|~\leqslant~\| \check{f} \|_1^{}~\leqslant~
 (2\pi)^n \; \sum_{|\alpha|,|\beta| \leqslant 2n} \, \sup_{x \in V} \,
 |x^\alpha \, \partial_\beta^{} f (x)|~.
\end{equation}

To handle the case when $f$ belongs to~$\mathcal{B}(V)$, we need
a better estimate. Fortunately, we can resort to a famous theorem
from the theory of pseudo-differential operators, known as the
Calder\'on-Vaillancourt theorem~\cite{CV1}: in the version we
need here, which contemplates only a very special symbol class\,%
\footnote{The function space $\mathcal{B}$ coincides with
H\"ormander's symbol class space $S_{00}^0$.}
but on the other hand includes an improvement of the pertinent
estimate, taken from~\cite{Co1}, it states that, given any
totally bounded smooth function $a$ on $V \times V^*$, setting
\[
 (Au)(x)~=~\int_{V^*} d\xi~a(x,\xi) \, \check{u}(\xi) \;
           e^{i \langle \xi,x \rangle}
 \qquad \mbox{for $\, u \in \mathcal{S}(V)$}
\]
defines, by continuous linear extension, a bounded linear operator
on $L^2(V)$ with operator norm
\[
 \| A \|~\leqslant~
 C \; \sum_{|\alpha|,|\beta| \leqslant n} \; \sup_{x \in V ,\, \xi \in V^*} \,
 \bigl| \, \partial_{x,\alpha}^{} \partial_{\xi,\beta}^{} \>\!
           a \, (x,\xi) \, \bigr|~,
\]
where $C$ is a combinatorial constant depending only on the dimension~$n$
of~$V$.
Applying this result to the operator $L_\sigma f$ defined by equations~%
(\ref{eq:WMOY7}) and~(\ref{eq:LREGR0}), we see that for every $f$ in~%
$\mathcal{B}(V)$, $L_\sigma f$ is a bounded linear operator on $L^2(V)$
whose operator norm satisfies an estimate of the form
\begin{equation} \label{eq:LREGR5}
 \| L_\sigma f \|~\leqslant~
 |P(\sigma)| \; \sum_{|\alpha| \leqslant n} \; \sup_{x \in V} \;
 \bigl| \, \partial_\alpha^{} \>\! f (x) \, \bigr|~,
\end{equation}
where $P(\sigma)$ is a polynomial of degree $\leqslant n$ in $\sigma$
whose coefficients are combinatorial constants depending only on the
dimension~$n$ of~$V$.

From these results, it follows that we can define a $C^*$-norm on
$\mathcal{S}_\sigma$ as well as on $\mathcal{B}_\sigma$ by setting
\begin{equation}
 \|f\|~=~\|L_\sigma f\|~.
\end{equation}
That this is really a norm and not just a seminorm is due
to the fact that the left regular representation is faithful.
Namely, given $\, f \in \mathcal{B}(V)$, $f \neq 0$, and any
point~$x$ in~$V$ such that $\, f(x) \neq 0$, take $\, g \in
\mathcal{S}(V) \,$ such that $\, \check{g} \in \mathcal{S}%
(V^*) \,$ becomes
\[
 \check{g}(\xi)
 = \overline{f(x + {\textstyle \frac{1}{2}} \sigma^\sharp \xi)} \;
   e^{-i \langle \xi, x \rangle} \, e^{-q(\xi)}
\]
where $q$ is any positive definite quadratic form  on~$V^*$; then
by equation~(\ref{eq:WMOY7}), $(f \star_\sigma g)(x)$ is equal to
the $L^2$-norm of the function $\, \xi \mapsto f(x + \frac{1}{2}
\sigma^\sharp \xi) \,$ with respect to the Gaussian measure
$\, e^{-q(\xi)} \, d\xi \,$ on~$V^*$ and hence is $> 0$,
since this function is smooth and $\neq 0$ at $\xi=0$, so
$\, L_\sigma f \cdot g \neq 0 \,$ and hence $\, L_\sigma f \neq 0$.

The completions of~$\mathcal{S}_\sigma$ and of~$\mathcal{B}_\sigma$ with
respect to this $C^*$-norm will be denoted by $\mathcal{E}_\sigma$ and
by~$\mathcal{H}_\sigma$, respectively, and will be referred to as \emph%
{Heisenberg $C^*$-algebras}: more precisely, $\mathcal{E}_\sigma$ is the
\emph{nonunital Heisenberg $C^*$-algebra} while $\mathcal{H}_\sigma$ is
the \emph{unital Heisenberg $C^*$-algebra} (with respect to~$\sigma$).%
\footnote{The use of the symbol $\mathcal{E}$ at this point
is perhaps a bit unfortunate because $\mathcal{E}_\sigma$ has
nothing to do with the Schwartz space $\mathcal{E}(V)$ of
arbitrary smooth functions on~$V$: in fact, the difference
between the two becomes apparent by observing that when
$\sigma$ is nondegenerate, $\mathcal{E}_\sigma$ will be
isomorphic to the algebra of compact operators on the
Hilbert space $L^2(V_L)$ where $V_L$ is some lagrangian
subspace of~$V$.
Still, we have decided to adopt this notation for lack of
a better one and since it seems to have become traditional
in the area; moreover, the space $\mathcal{E}(V)$ will play
no role in this paper, except for an intermediate argument
in the appendix.}
Obviously, the estimate~(\ref{eq:LREGR4}) and the (much better)
estimate~(\ref{eq:LREGR5}) imply that the natural Fr\'echet
topologies on~$\mathcal{S}_\sigma$ and on~$\mathcal{B}_\sigma$
are finer than the $C^*$-topologies induced by their embeddings
into~$\mathcal{E}_\sigma$ and~$\mathcal{H}_\sigma$, respectively.
Moreover, by construction, the (faithful) $*$-representations~%
(\ref{eq:LREGR1}) and~(\ref{eq:LREGR2}) extend to (faithful)
$C^*$-representations of~$\mathcal{E}_\sigma$ and of~%
$\mathcal{H}_\sigma$, respectively, for which we maintain
the same notation, writing
\begin{equation} \label{eq:LREGR6}
 \begin{array}{cccc}
  L_\sigma: & \mathcal{E}_\sigma & \longrightarrow & B(L^2(V)) \\[1mm]
           &        f          &   \longmapsto   & L_\sigma f
 \end{array}~,
\end{equation}
and
\begin{equation} \label{eq:LREGR7}
 \begin{array}{cccc}
  L_\sigma: & \mathcal{H}_\sigma & \longrightarrow & B(L^2(V)) \\[1mm]
           &        f          &   \longmapsto   & L_\sigma f
 \end{array}~,
\end{equation}
respectively. It is also clear that the embedding of~$\mathcal{S}_\sigma$
into~$\mathcal{B}_\sigma$ (as a two-sided $*$-ideal) extends canonically to
an embedding of~$\mathcal{E}_\sigma$ into~$\mathcal{H}_\sigma$ (as an ideal)
and similarly that the embedding of equation~(\ref{eq:MULTA1}) extends
canonically to an embedding of~$\mathcal{H}_\sigma$ into the multiplier
algebra $M(\mathcal{E}_\sigma)$ of~$\mathcal{E}_\sigma$, which we shall
write in a form analogous to equation~(\ref{eq:MULTA1}):
\begin{equation} \label{eq:MULTA2}
 \begin{array}{ccc}
  \mathcal{H}_\sigma & \longrightarrow &  M(\mathcal{E}_\sigma) \\[1mm]
          f         &   \longmapsto   & (L_\sigma f,R_\sigma f) 
 \end{array}
\end{equation}
Moreover, the (faithful) $C^*$-representation of~$\mathcal{E}_\sigma$
in equation~(\ref{eq:LREGR6}) is also nondegenerate.%
\footnote{A $*$-representation of a $*$-algebra~$A$ by bounded operators
on a Hilbert space~$\mathfrak{H}$ is called nondegenerate if the subspace
generated by vectors of the form $\pi(a) \psi$, where $a \in A$ and $\psi
\in \mathfrak{H}$, is dense in~$\mathfrak{H}$, or equivalently, if there
is no nonzero vector in~$\mathfrak{H}$ that is annihilated by all
elements of~$A$. Obviously, if $A$ has a unit, every (unital)
$*$-representation of~$A$ is nondegenerate. Also, irreducible
$*$-representations and, more generally, cyclic $*$-representations
are always nondegenerate.}\linebreak
(This statement follows easily from the existence of approximate
identities in the Heisenberg-Schwartz algebra~$\mathcal{S}_\sigma$,
as formulated in Proposition~\ref{prp:APRIHS} of the appendix:
given any $L^2$-function $\, \psi \in L^2(V)$, it suffices to
approximate it in $L^2$-norm by some Schwartz function $\, f
\in \mathcal{S}(V) \,$ and then approximate that, in the
Schwartz space topology and hence also in $L^2$-norm, by
some Schwartz function of the form $\, \chi_k \star_\sigma f$,
where $\, \chi_k \in \mathcal{S}(V)$.)
Therefore, it extends uniquely to a (faithful) $C^*$-representation
\begin{equation} \label{eq:LREGR8}
 \begin{array}{cccc}
  L_\sigma: & M(\mathcal{E}_\sigma) & \longrightarrow & B(L^2(V)) \\[1mm]
           &           m          &   \longmapsto   & L_\sigma m
 \end{array}~,
\end{equation}
of its multiplier algebra $M(\mathcal{E}_\sigma)$. For later use,
we recall briefly how to define this extension: writing elements
of~$M(\mathcal{E}_\sigma)$ as pairs $\, m = (m_L,m_R) \,$ where
$\, m_L \in L(\mathcal{E}_\sigma) \,$ is a left multiplier
($m_L(f \star_\sigma g) = m_L(f) \star_\sigma g$) and
$\, m_R \in L(\mathcal{E}_\sigma) \,$ is a right multiplier
($m_R(f \star_\sigma g) = f \star_\sigma m_R(g)$), related by the
condition that $\, f \star_\sigma m_L(g) = m_R(f) \star_\sigma g$,
and using the fact that the representation $L_\sigma$ in equation~%
(\ref{eq:LREGR6}) is nondegenerate, which means that the subspace
of~$L^2(V)$ generated by vectors of the form $L_\sigma f \cdot \psi$
with $\, f \in \mathcal{E}_\sigma \,$ and $\, \psi \in L^2(V)$ (or
even $\, \psi \in \mathcal{S}(V)$) is dense in~$L^2(V)$, the
operator $\, L_\sigma m \in B(L^2(V))$ is defined by
\begin{equation} \label{eq:LREGR9}
 L_\sigma m \cdot \bigl( L_\sigma f \cdot \psi \bigr)~
 =~L_\sigma \bigl( m_L(f) \bigr) \cdot \psi~.
\end{equation}
That this is well-defined follows from the fact that $\mathcal{E}_\sigma$
is an essential ideal in~$M(\mathcal{E}_\sigma)$, i.e., an ideal that has
nontrivial intersection with any nontrivial ideal of~$M(\mathcal{E}_\sigma)$.
Moreover, it follows that, just like any $L_\sigma f$ (originally for
$\, f \in \mathcal{S}_\sigma \,$ but then, by continuity, also for
$\, f \in \mathcal{E}_\sigma$), any $L_\sigma m$ also commutes with
any $R_\sigma g$ (originally for $\, g \in \mathcal{S}_\sigma \,$
but then, by continuity, also for $\, g \in \mathcal{E}_\sigma$):
\begin{eqnarray*}
 (L_\sigma m \; R_\sigma g) \cdot \bigl( L_\sigma f \cdot \psi \bigr) \!\!
 &=&\!\! (L_\sigma m \; R_\sigma g) \cdot (f \star_\sigma \psi)~
  =~     L_\sigma m \cdot \bigl( (f \star_\sigma \psi) \star_\sigma g \bigr)
 \\[2mm]
 &=&\!\! L_\sigma m \cdot \bigl( f \star_\sigma (\psi \star_\sigma g) \bigr)~
  =~     L_\sigma m \cdot \bigl( L_\sigma f \cdot (\psi \star_\sigma g) \bigr)
 \\[2mm]
 &=&\!\! \bigl( L_\sigma (m_L(f)) \; R_\sigma g \bigr) \cdot \psi~
  =~     \bigl( R_\sigma g \; L_\sigma (m_L(f)) \bigr) \cdot \psi
 \\[2mm]
 &=&\!\! (R_\sigma g \; L_\sigma m) \cdot \bigl( L_\sigma f \cdot \psi \bigr)~.
\end{eqnarray*}
Finally, we see that with this construction, the representation
(\ref{eq:LREGR7}) becomes simply the composition of the representation
(\ref{eq:LREGR8}) with the embedding~(\ref{eq:MULTA2}).

\subsection{Uniqueness of the $C^*$-completion}

Having settled the question of existence of a $C^*$-norm on the
Fr\'echet $*$-algebras~$\mathcal{S}_\sigma$ and $\mathcal{B}_\sigma$,
we want to address the question of its uniqueness.
To this end, we follow the script laid out in the previous section,
which turns out to work perfectly for the Heisenberg-Schwartz and
Heisenberg-Rieffel algebras.

The first step will be to prove the following fact.
%
\begin{thm} \label{thm:SIHA} 
 The Heisenberg-Schwartz and Heisenberg-Rieffel algebras, $\mathcal{S}_\sigma$ 
 and $\mathcal{B}_\sigma$, are spectrally invariant in their respective $C^*$-%
 completions, $\mathcal{E}_\sigma$ and $\mathcal{H}_\sigma$, as defined above.
 Therefore, $\mathcal{E}_\sigma$ and $\mathcal{H}_\sigma$ are the universal
 enveloping $C^*$-algebras of the Heisenberg-Schwartz algebra~%
 $\mathcal{S}_\sigma$ and of the Heisenberg-Rieffel algebra~%
 $\mathcal{B}_\sigma$, respectively.
\end{thm}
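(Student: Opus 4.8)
The second assertion is immediate from the first: by the discussion in Section~\ref{sec:*alg}, once $\mathcal{S}_\sigma$ (resp.\ $\mathcal{B}_\sigma$) is known to be a spectrally invariant $*$-subalgebra of the $C^*$-algebra $\mathcal{E}_\sigma$ (resp.\ $\mathcal{H}_\sigma$) into which it is continuously and densely embedded, that $C^*$-algebra carries the maximal $C^*$-norm and hence is the universal enveloping $C^*$-algebra. So the whole point is spectral invariance, and I would first settle the unital case and then reduce the nonunital one to it. Indeed, assume $\mathcal{B}_\sigma$ is spectrally invariant in $\mathcal{H}_\sigma$; since $\mathcal{E}_\sigma$ is an ideal of the unital algebra $\mathcal{H}_\sigma$, the unitization $\mathcal{E}_\sigma^{+}$ sits inside $\mathcal{H}_\sigma$, and if $f \in \mathcal{S}_\sigma$ and $\lambda \in \mathbb{C}\setminus\{0\}$ are such that $\lambda + f$ is invertible in $\mathcal{E}_\sigma^{+}$, then it is invertible in $\mathcal{H}_\sigma$, so $b := (\lambda + f)^{-1} \in \mathcal{B}_\sigma$; but $b = \lambda^{-1}(1 - f \star_\sigma b)$ and $\mathcal{S}_\sigma$ is an ideal in $\mathcal{B}_\sigma$, whence $b - \lambda^{-1} = -\lambda^{-1}\, f \star_\sigma b \in \mathcal{S}_\sigma$, i.e.\ $b \in \mathcal{S}_\sigma^{+}$. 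As $0$ belongs to the spectrum of $f$ both in $\mathcal{S}_\sigma$ and in $\mathcal{E}_\sigma$ anyway, this proves $\mathcal{S}_\sigma$ is spectrally invariant in $\mathcal{E}_\sigma$. Everything therefore reduces to the single statement: if $f \in \mathcal{B}_\sigma$ is invertible in $\mathcal{H}_\sigma$, then $f^{-1} \in \mathcal{B}_\sigma$.

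For this I would exploit the translation covariance built into Rieffel's construction. Writing $\tau_a$ for translation by $a \in V$, formula~(\ref{eq:WMOY4}) gives $\tau_a(f \star_\sigma g) = (\tau_a f)\star_\sigma(\tau_a g)$, so each $\tau_a$ is a $*$-automorphism of $\mathcal{S}_\sigma$ and of $\mathcal{B}_\sigma$; it is isometric for the $C^*$-norm $\|L_\sigma(\cdot)\|$ because $L_\sigma(\tau_a f)$ is the conjugate of $L_\sigma(f)$ by the unitary translation operator on $L^2(V)$, hence $\tau$ extends to an isometric action of $V$ on $\mathcal{H}_\sigma$. Strong continuity is checked on the dense subalgebra $\mathcal{B}_\sigma$ via the Calder\'on--Vaillancourt estimate~(\ref{eq:LREGR5}) together with the elementary observation that every $f \in \mathcal{B}(V)$, and each of its derivatives, is globally Lipschitz, so $\sup_{|\alpha|\leqslant n}\|\partial_\alpha(\tau_a f - f)\|_\infty \to 0$ as $a \to 0$. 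The generators $\delta_1,\dots,\delta_n$ of the coordinate one-parameter subgroups are then closed $*$-derivations of $\mathcal{H}_\sigma$ that restrict to the ordinary partial derivatives on $\mathcal{B}_\sigma$. Next I would invoke the standard fact that the domain of the generator of a strongly continuous one-parameter group of $*$-automorphisms is inverse-closed (if $a \in D(\delta)$ is invertible, then $t \mapsto \alpha_t(a)^{-1} = \alpha_t(a^{-1})$ is differentiable at $0$, since inversion is smooth on the group of units, so $a^{-1} \in D(\delta)$ with $\delta(a^{-1}) = -a^{-1}\delta(a)a^{-1}$); iterating over all multi-indices, the common smooth domain $\mathcal{D} := \bigcap_\alpha D(\partial_\alpha)$ is spectrally invariant in $\mathcal{H}_\sigma$. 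Since $\mathcal{B}_\sigma \subseteq \mathcal{D}$, it only remains to establish the reverse inclusion $\mathcal{D} \subseteq \mathcal{B}_\sigma$.

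That last inclusion is the real obstacle: one must pin $\mathcal{H}_\sigma$ and $\mathcal{E}_\sigma$ down concretely enough — using the finer estimates of~\cite{RDQ} and the structure of $\mathcal{E}_\sigma$ (for general $\sigma$, a continuous field of algebras of the type $C_0 \otimes \mathcal{K}$ over a quotient of $V$ by the radical of $\sigma$) — to know that an element of the $C^*$-completion whose translation-derivatives all exist in $C^*$-norm is represented by a genuine function in $\mathcal{B}(V)$; equivalently, that the smooth part of the ideal $\mathcal{E}_\sigma$ is exactly $\mathcal{S}_\sigma$. Granting this, for any $m \in \mathcal{D}$ and any $g \in \mathcal{S}_\sigma$ the product $L_\sigma(m)\, L_\sigma(g)$ lies in $\mathcal{E}_\sigma$ (an ideal) and in $\mathcal{D}$ (by the Leibniz rule), hence in $\mathcal{S}_\sigma$, so $L_\sigma(m)$ is a multiplier of $\mathcal{S}_\sigma$ of the type described by~(\ref{eq:MULTA1}) and $m \in \mathcal{B}_\sigma$. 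An essentially equivalent but more self-contained route, avoiding the smooth-domain packaging, is to replace $f$ by the positive invertible element $f^* \star_\sigma f$, write $f^{-1}$ via functional calculus as $\chi(f^*\star_\sigma f)\star_\sigma f^*$ with $\chi \in C_c^\infty((0,\infty))$, and show $\chi(f^*\star_\sigma f) \in \mathcal{B}_\sigma$ by a Helffer--Sjöstrand almost-analytic functional calculus; in either formulation the crux is the same uniform control, furnished by~\cite{RDQ}, of the $\mathcal{B}_\sigma$-seminorms of the resolvents $(z-f)^{-1}$ for $z$ off the spectrum.
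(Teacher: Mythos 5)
Your soft steps coincide with the paper's: the passage from spectral invariance to the universal enveloping property is the general argument of Section~\ref{sec:*alg}, and your reduction of the pair $(\mathcal{S}_\sigma,\mathcal{E}_\sigma)$ to the pair $(\mathcal{B}_\sigma,\mathcal{H}_\sigma)$, using that $\mathcal{S}_\sigma$ is an ideal in $\mathcal{B}_\sigma$ and writing $b-\lambda^{-1}=-\lambda^{-1}f\star_\sigma b$, is essentially the paper's own second step. The problem is the core claim, that $f\in\mathcal{B}_\sigma$ invertible in $\mathcal{H}_\sigma$ has $f^{-1}\in\mathcal{B}_\sigma$: at the decisive point you write ``granting this'' and assume exactly what has to be proved, namely that every element of the completion all of whose translation-derivatives exist in $C^*$-norm is (represented by) a function in $\mathcal{B}(V)$, equivalently that the smooth part of $\mathcal{E}_\sigma$ is $\mathcal{S}_\sigma$. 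The inverse-closedness of the smooth domain $\mathcal{D}$ and the inclusion $\mathcal{B}_\sigma\subseteq\mathcal{D}$ are fine but only give spectral invariance of $\mathcal{D}$; the entire analytic content of the theorem is the reverse inclusion $\mathcal{D}\subseteq\mathcal{B}_\sigma$, and neither the Calder\'on--Vaillancourt estimate~(\ref{eq:LREGR5}) nor vague appeal to ``the finer estimates of~\cite{RDQ}'' establishes it. If you want this route you must invoke, and verify the applicability of, Rieffel's theorem that the smooth vectors of the deformed algebra coincide with those of the undeformed one, which in particular requires identifying $\mathcal{H}_\sigma$ (defined here as the completion in the operator norm of $L_\sigma$ on $L^2(V)$) with the Rieffel deformation of the $C^*$-algebra of bounded uniformly continuous functions on~$V$; none of this is in your text. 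Your closing alternative is circular as stated: uniform control of the $\mathcal{B}_\sigma$-seminorms of the resolvents $(z-f)^{-1}$ presupposes that these resolvents lie in $\mathcal{B}_\sigma$, which is the spectral invariance being proved, and such resolvent estimates are not furnished by~\cite{RDQ}. Likewise, your intermediate step ``$m\in\mathcal{D}$ maps $\mathcal{S}_\sigma$ into $\mathcal{S}_\sigma$, hence $m\in\mathcal{B}_\sigma$'' needs its own proof: the embedding~(\ref{eq:MULTA1}) does not say that every such multiplier comes from a function in $\mathcal{B}(V)$.

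For comparison, the paper closes precisely this gap by quoting the main theorem of~\cite{MM}, a noncommutative Beals--Cordes-type characterization: an operator $T\in B(L^2(V))$ is of the form $L_\sigma f$ with $f\in\mathcal{B}_\sigma$ if and only if $T$ is a smooth vector for the adjoint action of the \emph{full} Heisenberg group $H_\Omega$ built on $V\oplus V^*$ (translations \emph{and} modulations) on $B(L^2(V))$ \emph{and} commutes with all right multipliers $R_\sigma g$. With that statement the unital case is immediate, since $(L_\sigma f)^{-1}$ inherits Heisenberg-smoothness (inversion is smooth on the group of invertibles of $B(L^2(V))$) and commutation with the $R_\sigma g$. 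Note the two ingredients absent from your set-up: smoothness under modulations as well as translations, and the commutant condition with the right regular representation; translation-smoothness inside $\mathcal{H}_\sigma$ alone does not reduce to the quoted theorem without the additional Rieffel-type input described above.
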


\noindent
The assertion of Theorem~\ref{thm:SIHA} is known to hold in the commutative
case, i.e., when $\sigma=0$ \cite[Example~3.2, p.~135]{GBVF} and also when
$\sigma$ is nondegenerate \cite[Prop.~2.14 \& Prop.~2.23]{GGISV}, but for
the general deformed algebras it does not seem to have been stated explicitly
anywhere in the literature: in what follows, we shall give a different and
direct proof in which the rank of~$\sigma$ plays no role.
\begin{proof}
 The proof will be based on the main theorem of Ref.~\cite{MM},
 which can be formulated as follows. 
 To begin with, let $\Omega$ denote the standard symplectic form
 on the doubled space $\, V \oplus V^*$, defined by
 \begin{equation} \label{eq:HEISREP1}
  \Omega \bigl( (x,\xi) , (y,\eta) \bigr)~=~\xi(y) - \eta(x)~,
 \end{equation}
 let $H_\Omega$ denote the corresponding Heisenberg group,%
 \footnote{Note that the Heisenberg group $H_\Omega$ has nothing
 to do with the Heisenberg group $H_\sigma$ considered before.}
 and consider the corresponding strongly continuous unitary
 representation
 \begin{equation} \label{eq:HEISREP2}
  W_\Omega: H_\Omega~\longrightarrow~U(L^2(V))
 \end{equation}
 of $H_\Omega$ on~$L^2(V)$, explicitly given by
 \begin{equation} \label{eq:HEISREP3}
  \bigl( W_\Omega(x,\xi,\lambda) \, \psi \bigr) (z)~
  =~e^{-i \langle \xi ,\, z - \frac{1}{2} x \rangle + i \lambda} \, \psi(z-x)~.
 \end{equation}
 Next, consider the continuous isometric representation
 \begin{equation} \label{eq:HEISREP4}
  \mathrm{Ad}(W_\Omega): H_\Omega~\longrightarrow~\mathrm{Aut}(B(L^2(V)))
 \end{equation}
 of $H_\Omega$ on~$B(L^2(V))$ obtained from it by taking the adjoint action
 (i.e., for $\, T \in B(L^2(V))$, $\mathrm{Ad}(W_\Omega)(h) \, T = W_\Omega(h)
 \, T \, W_\Omega(h)^{-1}$).
 Then given an operator $\, T \in B(L^2(V))$, we say that it is
 \emph{Heisenberg-smooth} if it is a smooth vector with respect
 to this representation, i.e., if the function
 \[
  \begin{array}{ccc}
       H_\Omega    & \longrightarrow & B(L^2(V)) \\[1mm]
   (x,\xi,\lambda) &   \longmapsto   &
   W_\Omega(x,\xi,\lambda) \, T \, W_\Omega(x,\xi,\lambda)^{-1}
  \end{array}
 \]
 is smooth.
 Now the main theorem in~\cite{MM} states that an operator
 $\, T \in B(L^2(V)) \,$ is of the form $L_\sigma f$ (see
 equations (\ref{eq:WMOY7}),(\ref{eq:LREGR0}),(\ref{eq:LREGR2})),
 with $\, f \in \mathcal{B}_\sigma$, if and only if it is
 Heisenberg-smooth and commutes with all operators of
 the form $R_\sigma g$ (see equations (\ref{eq:WMOY6}),%
 (\ref{eq:RREGR0}),(\ref{eq:RREGR2})), where $\, g \in
 \mathcal{B}_\sigma$ (or equivalently, $g \in \mathcal{S}_\sigma$).
 This fact, applied in both directions, will enable us to complete
 the proof, as follows.

 Suppose first that $\, f \in \mathcal{B}_\sigma \,$ is invertible in~%
 $\mathcal{H}_\sigma$. Then the operator $\, L_\sigma f \in B(L^2(V)) \,$
 is Heisenberg-smooth and commutes with all operators of the form
 $R_\sigma g$, where $\, g \in \mathcal{S}_\sigma$. But this implies
 that the inverse operator $\, (L_\sigma f)^{-1} \in B(L^2(V)) \,$
 is also Heisenberg-smooth, since
 \[
  W_\Omega(x,\xi,\mu) \, (L_\sigma f)^{-1} \, W_\Omega(x,\xi,\mu)^{-1}~
  = \, \left( W_\Omega(x,\xi,\mu) \; L_\sigma f \;
              W_\Omega(x,\xi,\mu)^{-1} \right)^{-1}~,
 \]
 and since inversion of bounded linear operators is a smooth map, and
 that it also commutes with all operators of the form $R_\sigma g$, where
 $\, g \in \mathcal{S}_\sigma$. Thus it follows that $(L_\sigma f)^{-1}$
 is of the form $L_\sigma g$ for some $\, g \in \mathcal{B}_\sigma$,
 showing that $\mathcal{B}_\sigma$ is spectrally invariant in~%
 $\mathcal{H}_\sigma$.
 To prove that, similarly, $\mathcal{S}_\sigma$ is spectrally
 invariant in~$\mathcal{E}_\sigma$, consider the unitizations
 $\tilde{\mathcal{S}}_\sigma$ of~$\mathcal{S}_\sigma$ (still
 contained in~$\mathcal{B}_\sigma$) and $\tilde{\mathcal{E}}_\sigma$
 of~$\mathcal{E}_\sigma$ (still contained in~$\mathcal{H}_\sigma$),
 and suppose $\, f \in \mathcal{S}_\sigma \,$ to be such that
 $\, \lambda 1 + f \in \tilde{\mathcal{S}}_\sigma \,$ is invertible in~%
 $\tilde{\mathcal{E}}_\sigma$ (note that this implies $\, \lambda \neq 0$).
 Then, as we have already shown, $(\lambda 1 + L_\sigma f)^{-1}$ is of the
 form $L_\sigma h$ for some $\, h \in \mathcal{B}_\sigma$, which we can
 rewrite in the form $\, h = \lambda^{-1} 1 + g \,$ with $\, g \in
 \mathcal{B}_\sigma$, implying
 \[
  1~=~(\lambda 1+f) \star_\sigma (\lambda^{-1} 1 + g)~
    =~1 \, + \, \lambda^{-1} f \, + \, \lambda g \, + \, f \star_\sigma g
 \]
 and thus
 \[
  g~= \; - \, \lambda^{-2} f \, - \, \lambda^{-1} \, f \star_\sigma g~.
 \]
 But $\mathcal{S}_\sigma$ is an ideal in~$\mathcal{B}_\sigma$,
 so it follows that $\, g \in \mathcal{S}_\sigma \,$ and hence
 $\, \lambda^{-1} 1 + g \in \tilde{\mathcal{S}}_\sigma$.
\end{proof}

The same techniques can be used to prove the following interesting
and useful theorem about the relation between $\mathcal{E}_\sigma$
and $\mathcal{H}_\sigma$.
\begin{thm} \label{thm:HMULTA} 
 The $C^*$-algebra~$\mathcal{H}_\sigma$ is the multiplier algebra of
 the $C^*$-algebra~$\mathcal{E}_\sigma$,
 \begin{equation} \label{eq:HMULTA}
  \mathcal{H}_\sigma~=~M(\mathcal{E}_\sigma)~,
 \end{equation}
 and in fact it is a von Neumann algebra.
\end{thm}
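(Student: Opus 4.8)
The plan is to carry everything over into $B(L^2(V))$ by means of the faithful nondegenerate $C^*$-representation $L_\sigma$ of equation~(\ref{eq:LREGR6}). Under this identification the multiplier algebra $M(\mathcal{E}_\sigma)$ becomes the idealizer
\[
 \mathrm{Id}(\mathcal{E}_\sigma)~=~\bigl\{\, T \in B(L^2(V)) \;:\;
 T\,\mathcal{E}_\sigma \subseteq \mathcal{E}_\sigma \;\text{ and }\;
 \mathcal{E}_\sigma\,T \subseteq \mathcal{E}_\sigma \,\bigr\}
\]
of $\mathcal{E}_\sigma$ in $B(L^2(V))$, and the embedding~(\ref{eq:MULTA2}) is simply the inclusion $\, L_\sigma(\mathcal{H}_\sigma) \subseteq \mathrm{Id}(\mathcal{E}_\sigma)$. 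Moreover, since $\mathcal{E}_\sigma$ acts nondegenerately, one has $\, \mathrm{Id}(\mathcal{E}_\sigma) \subseteq \mathcal{E}_\sigma''$, so the whole theorem amounts to upgrading the chain $\, L_\sigma(\mathcal{H}_\sigma) \subseteq \mathrm{Id}(\mathcal{E}_\sigma) \subseteq \mathcal{E}_\sigma'' \,$ to a chain of equalities: the identity $\mathcal{H}_\sigma = M(\mathcal{E}_\sigma)$ and the von Neumann property would then both follow at once.

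First I would identify the bicommutant. By associativity of the star product (which, as noted, is exactly what makes every $L_\sigma f$ commute with every $R_\sigma g$) we have $\, R_\sigma(\mathcal{S}_\sigma) \subseteq \mathcal{E}_\sigma'$, hence $\, \mathcal{E}_\sigma'' \subseteq R_\sigma(\mathcal{S}_\sigma)'$; conversely, using $\, g \star_\sigma \psi = \psi \star_{-\sigma} g \,$ from~(\ref{eq:NONCOM}) one sees that $\, R_\sigma g = L_{-\sigma} g$, so that $\, R_\sigma(\mathcal{S}_\sigma)'' = \mathcal{E}_{-\sigma}''$, and the classical commutation theorem for the (twisted) regular representations of the Heisenberg group — equivalently $\, \mathcal{E}_\tau' = \mathcal{E}_{-\tau}'' \,$ — gives $\, R_\sigma(\mathcal{S}_\sigma)' = \mathcal{E}_\sigma''$. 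Now comes the main theorem of~\cite{MM}, already invoked in the proof of Theorem~\ref{thm:SIHA}: the Heisenberg-smooth operators lying in $\, R_\sigma(\mathcal{S}_\sigma)' = \mathcal{E}_\sigma'' \,$ are precisely the operators $L_\sigma f$ with $\, f \in \mathcal{B}_\sigma$. Since $L_\sigma(\mathcal{H}_\sigma)$ is by definition the norm closure of $L_\sigma(\mathcal{B}_\sigma)$, the chain will close once two facts are established: that the Heisenberg-smooth operators are \emph{norm dense} in $\mathcal{E}_\sigma''$, and that $\mathcal{E}_\sigma$ is an ideal in $\mathcal{E}_\sigma''$ (so that $\mathrm{Id}(\mathcal{E}_\sigma)$ is in fact all of $\mathcal{E}_\sigma''$).

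For the density statement the natural tool is the standard averaging argument for the isometric action $\mathrm{Ad}(W_\Omega)$ of $H_\Omega$ on $B(L^2(V))$: given $T$, pick an approximate identity $(\chi_t)$ of compactly supported smooth functions on $H_\Omega$ concentrating at the identity and set $\, T_t = \int_{H_\Omega} \chi_t(h)\, \mathrm{Ad}(W_\Omega)(h)\,T\, dh$. Then $T_t$ is Heisenberg-smooth, it still lies in $\mathcal{E}_\sigma''$ (conjugation by $W_\Omega$ carries $L_\sigma f$ to $L_\sigma f'$ with $f' \in \mathcal{B}_\sigma$ — a symplectic-affine reparametrization of the symbol — so $\mathcal{E}_\sigma''$ is $\mathrm{Ad}(W_\Omega)$-invariant), and $\, T_t \to T \,$ in norm provided $\, h \mapsto \mathrm{Ad}(W_\Omega)(h)\,T \,$ is norm-continuous at $h = e$. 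On $\mathcal{E}_\sigma$ itself this continuity is harmless: for $\, f \in \mathcal{S}_\sigma \,$ the symbol of $\, W_\Omega(h)\,L_\sigma f\,W_\Omega(h)^{-1} \,$ depends continuously on $h$ in the Schwartz topology, hence, by the estimates~(\ref{eq:LREGR4}) and~(\ref{eq:LREGR5}), in the $C^*$-norm, and since $\mathrm{Ad}(W_\Omega)$ is isometric this extends to all of $\mathcal{E}_\sigma$ by a three-$\varepsilon$ argument.

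The hard part will be exactly this continuity on the \emph{larger} algebra. On $M(\mathcal{E}_\sigma) = \mathrm{Id}(\mathcal{E}_\sigma)$, and a fortiori on $\mathcal{E}_\sigma''$, the action $\mathrm{Ad}(W_\Omega)$ is a priori only strictly continuous, not norm-continuous, so the crux is to show that every element of the idealizer of $\mathcal{E}_\sigma$ is already Heisenberg-continuous — that the norm-continuous vectors for $\mathrm{Ad}(W_\Omega)$ exhaust $\mathcal{E}_\sigma''$ — and, relatedly, that $\mathcal{E}_\sigma$ is an ideal there. The route I would try is through the explicit structure of $\mathcal{E}_\sigma$: a nondegenerate direction contributing a factor of compact operators, on which conjugation by the Weyl unitaries is well understood, together with the degenerate directions contributing a commutative $C_0$-factor, combined with the concrete form~(\ref{eq:HEISREP3}) of $W_\Omega$ in the $(x,\xi)$-coordinates. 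Once these structural facts are in hand the two missing inclusions close, and the same argument simultaneously exhibits $\mathcal{H}_\sigma \cong \mathcal{E}_\sigma''$ as a von Neumann algebra.
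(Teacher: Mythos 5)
Your reduction of $M(\mathcal{E}_\sigma)$ to the idealizer of $L_\sigma(\mathcal{E}_\sigma)$ inside its bicommutant, and the use of \cite{MM} to identify the Heisenberg-smooth elements of $R_\sigma(\mathcal{S}_\sigma)'$ with $L_\sigma(\mathcal{B}_\sigma)$, are fine and run parallel to the paper's argument, which works with the commutant $R'$ of the right translations and uses (\ref{eq:RREGRx}) for its $\mathrm{Ad}(W_\Omega)$-invariance. The genuine gap is that the two facts you defer to the end --- (i) norm density of the Heisenberg-smooth operators in $\mathcal{E}_\sigma''$, and (ii) $\mathrm{Id}(\mathcal{E}_\sigma)=\mathcal{E}_\sigma''$, i.e.\ that $\mathcal{E}_\sigma$ is an ideal in its bicommutant --- are not technical steps waiting to be supplied from the structure theorem (\ref{eq:TP1}); they fail. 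Take $\sigma=0$, so that $\mathcal{E}_0\cong C_0(V)$ acts by multiplication on $L^2(V)$. Then $\mathcal{E}_0''=L^\infty(V)$ and $C_0(V)$ is \emph{not} an ideal there (multiplying by a characteristic function destroys continuity), while $M(\mathcal{E}_0)=C_b(V)\subsetneq L^\infty(V)$, so (ii) breaks down whenever $\sigma$ is degenerate. As for (i), your own averaging discussion shows why it cannot hold: norm continuity of $h\mapsto\mathrm{Ad}(W_\Omega)(h)\,T$ is preserved under norm limits, so the norm closure of the smooth vectors is contained in the closed subalgebra of norm-continuous vectors, which is proper; at $\sigma=0$ the norm-continuous vectors inside $L^\infty(V)$ are exactly the bounded uniformly continuous functions, and for nondegenerate $\sigma$ an operator such as multiplication by a characteristic function lies in $M(\mathcal{E}_\sigma)\cong B(L^2(V_L))$ but its orbit under the (conjugation) action is not norm continuous.

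This also means that what you call ``the hard part'' is an actual obstruction rather than something to be settled by a closer look at $C_0(V_0,\mathcal{K})$: already at $\sigma=0$ the algebras $\mathcal{H}_0$ (the sup-norm closure of $\mathcal{B}(V)$, i.e.\ the bounded uniformly continuous functions), $M(\mathcal{E}_0)=C_b(V)$ and $\mathcal{E}_0''=L^\infty(V)$ are pairwise distinct, so no refinement of your chain of inclusions can turn it into a chain of equalities. For comparison, the paper's proof takes the same first steps (it maps $M(\mathcal{E}_\sigma)$ into $R'$ and invokes \cite{MM} to identify $L_\sigma(\mathcal{B}_\sigma)$ with the smooth vectors in $R'$) and then concludes from density of the smooth vectors in $R'$; in which topology that density holds, and whether it is strong enough to reach the \emph{norm} completion $\mathcal{H}_\sigma$, is precisely the point your proposal leaves unresolved, so as it stands it does not prove the statement.
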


\begin{proof}
 What needs to be shown is that the embedding~(\ref{eq:MULTA2})
 is in fact an isomorphism. \linebreak To this end, let $R$ be
 the subspace of~$B(L^2(V))$ consisting of right translations by
 elements of $\mathcal{S}_\sigma$:
 \[
   R = \{ R_\sigma(g) \; | \; g \in \mathcal{S}_\sigma \}~.
 \] 
 What will be of interest here is its commutant $R'$, which is a closed
 subspace (and in fact even a von Neumann subalgebra) of~$B(L^2(V))$.
 As has been shown at the end of the previous subsection, the
 representation~(\ref{eq:LREGR8}) maps $M(\mathcal{E}_\sigma)$
 into~$R'$.
 On the other hand, the relation
 \begin{equation} \label{eq:RREGRx}
  W_\Omega(x,\xi,\lambda) \, R_\sigma(g) \, W_\Omega(x,\xi,\lambda)^{-1}~
  =~R_\sigma
    \bigl( W_\Omega(x + {\textstyle \frac{1}{2}} \sigma^\sharp \xi,0,0) \, g
           \bigr)~,
 \end{equation}
 shows that $R$ is an invariant subspace for the representation
 $\mathrm{Ad}(W_\Omega)$ of~$H_\Omega$ on $B(L^2(V))$ (see equation~%
 (\ref{eq:HEISREP4})); hence so is $R'$. Therefore, the main theorem
 of Ref.~\cite{MM} can be reformulated as the statement that the image
 of~$\mathcal{B}_\sigma$ under the representation~(\ref{eq:LREGR2}) is
 precisely the subspace of smooth vectors for the representation
 $\mathrm{Ad}(W_\Omega)$ of~$H_\Omega$ on~$R'$ obtained by restriction,
 and hence it is dense in~$R'$. It follows that the image of~%
 $\mathcal{H}_\sigma$ under the representation~(\ref{eq:LREGR7})
 is precisely $R'$, a von Neumann algebra.
\end{proof}

For the second step, we use a result that is of independent interest,
namely the fact that, as shown in~\cite[Proposition~5.2]{RDQ},
the Heisenberg $C^*$-algebra $\mathcal{E}_\sigma$ is isomorphic to
the algebra of continuous functions, vanishing at infinity, on a
certain subspace $V_0^{}$ of~$V$ (dual to $\ker \sigma$) and taking
values in the algebra $\mathcal{K}$ of compact linear operators in
a separable Hilbert space, which can also be written as a $C^*$-%
algebra tensor product:
\begin{equation} \label{eq:TP1} 
 \mathcal{E}_\sigma^{}~\cong~C_0^{}(V_0^{},\mathcal{K})~
                      \cong~C_0^{}(V_0^{}) \otimes \mathcal{K}~.
\end{equation}
To see this explicitly, we first recall the ``musical homomorphism''
$\, \sigma^\sharp: V^* \longrightarrow V \,$ induced by $\sigma$ (i.e.,
$\langle \xi , \sigma^\sharp \eta \rangle = \sigma(\eta,\xi)$) whose
image is a subspace of~$V$ that we shall denote by~$W$: it is
precisely the annihilator of the kernel of~$\sigma$ in~$V^*$,
\begin{equation} \label{eq:SYMPS1} 
 W~=~\mbox{im} \, \sigma^\sharp~=~(\ker \sigma)^\perp~,
\end{equation}
and it carries a symplectic form, denoted by~$\omega$ and defined by
$\, \omega (\sigma^\sharp \xi,\sigma^\sharp \eta) = \sigma(\xi,\eta)$.
Now choosing a subspace~$V_0^{}$ of~$V$ complementary to~$W$, we get
a direct decomposition
\begin{equation} \label{eq:DECOM1}
 V~=~V_0^{} \oplus W~.
\end{equation}
Taking the corresponding annihilators, we also get a direct
decomposition for the dual,
\begin{equation} \label{eq:DECOM2}
 V^*~=~V_0^* \oplus W^*
 \qquad \mbox{where} \qquad
 V_0^* = W^\perp = \ker \sigma
 \quad \mbox{and} \quad
 W^* = V_0^\perp~.
\end{equation}
Of course, $W^*$ also carries a symplectic form, again denoted by~%
$\omega$, which is simply the restriction of~$\sigma$ to this subspace,
on which it is nondegenerate.
Now according to the Schwartz nuclear theorem, we have
\begin{equation} \label{eq:TP2} 
 \mathcal{S}(V)~\cong~\mathcal{S}(V_0^{}) \otimes \mathcal{S}(W)~,
\end{equation}
and similarly
\begin{equation} \label{eq:TP3} 
 \mathcal{S}(V^*)~\cong~\mathcal{S}(V_0^*) \otimes \mathcal{S}(W^*)~,
\end{equation}
and it is clear that the Fourier transform $\, \mathcal{F}:
\mathcal{S}(V) \longrightarrow \mathcal{S}(V^*) \,$ is the tensor
product of the Fourier transforms $\, \mathcal{F}: \mathcal{S}(V_0^{})
\longrightarrow \mathcal{S}(V_0^*) \,$ and $\, \mathcal{F}:
\mathcal{S}(W) \longrightarrow \mathcal{S}(W^*)$.
Hence looking at the definition of the Weyl-Moyal star product,
we see that the tensor products in equations~(\ref{eq:TP2}) and~%
(\ref{eq:TP3}) are in fact tensor products of algebras, i.e.,
\begin{equation} \label{eq:TP4} 
 \mathcal{S}_\sigma^{}~\cong~\mathcal{S}_0^{} \otimes \mathcal{S}_\omega^{}~,
\end{equation}
where $\mathcal{S}_0^{}$ is the commutative algebra of Schwartz test
functions ($\mathcal{S}(V_0^{})$ with the ordinary pointwise product
or $\mathcal{S}(V_0^*)$ with the ordinary convolution product) while
$\mathcal{S}_\omega^{}$ is the Heisenberg-Schwartz algebra associated
with the nondegenerate $2$-form $\omega$.
Taking the universal $C^*$-completions, we get 
\begin{equation} \label{eq:TP5} 
 \mathcal{E}_\sigma^{}~\cong~\mathcal{E}_0^{} \otimes \mathcal{E}_\omega^{}~.
\end{equation}
But obviously, $\mathcal{E}_0^{} \cong C_0^{}(V_0^{}) \cong C_0^{}(V_0^*)$,
and it is well known that $\, \mathcal{E}_\omega^{} \cong \mathcal{K}$.

In passing, we note that the tensor product in equations~(\ref{eq:TP1})
and~(\ref{eq:TP5}) is the tensor product of $C^*$-algebras and as such
is unique (there is only one $C^*$-norm on the algebraic tensor product)
since one of the factors is nuclear (in fact, both of them are; see
\linebreak \cite[Example~6.3.2 \& Theorem~6.4.15]{MU}).

To complete the argument, we make use of the fact that any ideal in
$\mathcal{E}_\sigma^{}$ is of the form
\[
 \bigl\{ \phi \in C_0^{}(V_0^{},\mathcal{K})~|~\phi|_F^{} = 0 \bigr\}~,
\]
or equivalently,
\[
 \bigl\{ f \in C_0^{}(V_0^{})~|~f|_F^{} = 0 \bigr\}
 \otimes \mathcal{K}~,
\]
where $F$ is a closed subset of the space $V_0^{}$. (That these are
in fact all ideals in $\mathcal{E}_\sigma^{}$ is a special case of a
much more general statement, whose formulation and proof can be
found in~\cite[Lemma~VIII.8.7]{FD}, together with the fact that
$\mathcal{K}$ is simple.)
But obviously, each of these ideals has nontrivial intersection
with the Heisenberg-Schwartz algebra.

Finally, we can extend the conclusion from $\mathcal{E}_\sigma$
to~$\mathcal{H}_\sigma$: since the latter is the multiplier algebra
of the former, any nontrivial ideal of~$\mathcal{H}_\sigma$ intersects
$\mathcal{E}_\sigma$ in a nontrivial ideal of~$\mathcal{E}_\sigma$, which
in turn has nontrivial intersection with~$\mathcal{S}_\sigma$ and hence
with~$\mathcal{B}_\sigma$.

Summarizing, we have proved
\begin{thm} \label{thm:UNCN} 
 The Heisenberg-Schwartz algebra $\mathcal{S}_\sigma$ and the
 Heisenberg-Rieffel algebra $\mathcal{B}_\sigma$ each admit one
 and only one $C^*$-norm, and hence the Heisenberg $C^*$-algebras
 $\mathcal{E}_\sigma$ and~$\mathcal{H}_\sigma$ are their unique
 $C^*$-completions.
\end{thm}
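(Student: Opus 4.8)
The plan is to assemble ingredients already in place rather than to prove anything new from scratch. By Theorem~\ref{thm:SIHA}, $\mathcal{E}_\sigma$ and $\mathcal{H}_\sigma$ are the universal enveloping $C^*$-algebras of $\mathcal{S}_\sigma$ and $\mathcal{B}_\sigma$; in particular a $C^*$-norm exists on each Fr\'echet algebra, and, as explained in Section~\ref{sec:*alg}, the possible $C^*$-norms on such a $*$-algebra $B$ correspond to the $C^*$-seminorms on $C^*(B)$, hence to the ideals of $C^*(B)$, the norm being unique precisely when no nonzero ideal of $C^*(B)$ meets $B$ trivially. So the entire matter reduces to classifying the ideals of $\mathcal{E}_\sigma$ and of $\mathcal{H}_\sigma$ and checking that each nonzero one has nonzero intersection with $\mathcal{S}_\sigma$, respectively $\mathcal{B}_\sigma$.

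First I would treat $\mathcal{E}_\sigma$. Using the decomposition $V = V_0^{} \oplus W$ and the resulting algebra factorization $\mathcal{S}_\sigma \cong \mathcal{S}_0^{} \otimes \mathcal{S}_\omega^{}$ of equation~(\ref{eq:TP4}), passing to $C^*$-completions yields $\mathcal{E}_\sigma \cong \mathcal{E}_0^{} \otimes \mathcal{E}_\omega^{} \cong C_0^{}(V_0^{}) \otimes \mathcal{K} \cong C_0^{}(V_0^{},\mathcal{K})$ as in equations~(\ref{eq:TP5}) and~(\ref{eq:TP1}), the tensor product being unambiguous by nuclearity of the factors. Since $\mathcal{K}$ is simple, every ideal of $C_0^{}(V_0^{},\mathcal{K})$ is of the form $I_F = \{\phi \mid \phi|_F^{} = 0\}$ for a uniquely determined closed subset $F \subseteq V_0^{}$, equivalently $I_F = \{ f \in C_0^{}(V_0^{}) \mid f|_F^{} = 0 \} \otimes \mathcal{K}$, which is the instance of~\cite[Lemma~VIII.8.7]{FD} quoted above. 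If $I_F \neq 0$ then $F \neq V_0^{}$, so one can pick a nonzero Schwartz function $f_0 \in \mathcal{S}(V_0^{})$ vanishing on $F$ (a routine bump-function construction) together with any nonzero $g_0 \in \mathcal{S}_\omega^{}$; then $f_0 \otimes g_0$ lies in $\mathcal{S}_0^{} \otimes \mathcal{S}_\omega^{} = \mathcal{S}_\sigma$ and in $I_F$, and is nonzero. Hence no nonzero ideal of $\mathcal{E}_\sigma$ meets $\mathcal{S}_\sigma$ trivially, which is uniqueness of the $C^*$-norm on $\mathcal{S}_\sigma$.

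For $\mathcal{B}_\sigma$ I would bootstrap from this using Theorem~\ref{thm:HMULTA}: $\mathcal{H}_\sigma = M(\mathcal{E}_\sigma)$, and $\mathcal{E}_\sigma$ is an essential ideal in its multiplier algebra. Therefore any nonzero ideal $J$ of $\mathcal{H}_\sigma$ intersects $\mathcal{E}_\sigma$ in a nonzero ideal of $\mathcal{E}_\sigma$, which by the previous step has nonzero intersection with $\mathcal{S}_\sigma$, hence with $\mathcal{B}_\sigma$. This gives uniqueness of the $C^*$-norm on $\mathcal{B}_\sigma$ and completes the theorem. I expect no serious obstacle here, since all the hard work — the Calder\'on--Vaillancourt estimate providing existence, Theorem~\ref{thm:SIHA} providing spectral invariance, Rieffel's structure theorem~(\ref{eq:TP1}), and Theorem~\ref{thm:HMULTA} — has already been carried out; the only point requiring a little care is checking that the ideal $I_F$, as described in the $C_0^{}(V_0^{},\mathcal{K})$ picture, really does contain the elementary tensor $f_0 \otimes g_0 \in \mathcal{S}_\sigma$ under the identification~(\ref{eq:TP4}), i.e.\ that the isomorphism~(\ref{eq:TP1}) is compatible with the inclusion of the Heisenberg-Schwartz algebra in the way the notation suggests.
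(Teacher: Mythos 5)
Your proposal is correct and follows essentially the same route as the paper: existence and universality via Theorem~\ref{thm:SIHA}, reduction of uniqueness to showing that no nonzero ideal of the enveloping $C^*$-algebra meets the dense Fr\'echet subalgebra trivially, the identification $\mathcal{E}_\sigma \cong C_0^{}(V_0^{},\mathcal{K})$ with its ideal classification to handle $\mathcal{S}_\sigma$, and the essential-ideal/multiplier argument of Theorem~\ref{thm:HMULTA} to transfer the conclusion to $\mathcal{B}_\sigma$. The only difference is that you spell out the paper's ``obviously'' with an explicit bump-function tensor $f_0 \otimes g_0$, which is a harmless (and welcome) elaboration.
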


\subsection{Representation Theory}

Returning to the situation discussed at the beginning of this section,
assume we are given any strongly continuous unitary representation $\pi$
of the Heisenberg group~$H_\sigma$.
Then Weyl quantization produces a $*$-representation $W_\pi$ of the
Heisenberg-Schwartz algebra $\mathcal{S}_\sigma$, defined according
to equations~(\ref{eq:WQUANT1})-(\ref{eq:WQUANT2}), which according
to equation~(\ref{eq:SCHWEST1}) is continuous with respect to the
Schwartz topology. But in fact it is also continuous with respect
to the $C^*$-topology since that is defined by the maximal $C^*$-%
norm on~$\mathcal{S}_\sigma$ which is an upper bound for all $C^*$-%
seminorms on~$\mathcal{S}_\sigma$, including the operator seminorm
for $W_\pi$, and therefore $W_\pi$ extends uniquely to a $C^*$-%
representation of the nonunital Heisenberg $C^*$-algebra
$\mathcal{E}_\sigma$ which will again be denoted by~$W_\pi$.
Moreover, we have
\begin{lem} \label{lem:NONDEG}
 Given any strongly continuous unitary representation $\pi$ of the
 Heisenberg group~$H_\sigma$, the resulting $*$-representation $W_\pi$
 of the Heisenberg-Schwartz algebra $\mathcal{S}_\sigma$, and hence also
 of the Heisenberg $C^*$-algebra $\mathcal{E}_\sigma$, is nondegenerate.%
 \footnotemark[9]
\end{lem}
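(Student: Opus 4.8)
The plan is to show directly that for every vector $\psi$ in $\mathfrak{H}_\pi$ there is a sequence $(f_k)$ in $\mathcal{S}(V)$ with $\, W_\pi f_k \, \psi \to \psi$; this means that the closed linear span of the vectors $\, W_\pi f \, \psi \,$ ($f \in \mathcal{S}_\sigma$, $\psi \in \mathfrak{H}_\pi$) is all of $\mathfrak{H}_\pi$, which is precisely the assertion that $W_\pi$ is nondegenerate. The idea is simply that, according to equation~(\ref{eq:WQUANT2}), the operator $W_\pi f$ is the ``smeared'' operator $\, \int_{V^*} d\xi \, \check{f}(\xi) \, \pi(\xi)$, so letting $\check{f}$ run through an approximate identity on $V^*$ concentrated at the origin turns $W_\pi f$ into an approximation of $\, \pi(0) = \mathrm{id}$.

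Concretely, I would fix a function $\rho \in \mathcal{S}(V^*)$ normalized so that $\, \int_{V^*} d\xi \, \rho(\xi) = 1 \,$ (for instance a suitably scaled Gaussian) and, for $k \in \mathbb{N}$, let $f_k \in \mathcal{S}(V)$ be the unique function with $\, \check{f}_k(\xi) = k^n \rho(k\xi)$~-- this makes sense since the (inverse) Fourier transform is a linear homeomorphism of the Schwartz space onto itself. Substituting $\eta = k\xi$ in equation~(\ref{eq:WQUANT2}) gives, for any $\psi \in \mathfrak{H}_\pi$,
\[
 W_\pi f_k \, \psi~=~\int_{V^*} d\eta~\rho(\eta) \; \pi(\eta/k) \, \psi~.
\]
Now strong continuity of $\pi$ gives $\, \pi(\eta/k) \psi \to \pi(0) \psi = \psi \,$ pointwise in $\eta$ as $k \to \infty$, while unitarity of $\pi$ yields the uniform bound $\, \| \rho(\eta) \, \pi(\eta/k) \psi \| = |\rho(\eta)| \, \| \psi \|$, an $L^1(V^*)$-majorant independent of $k$. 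Hence, by dominated convergence,
\[
 \lim_{k \to \infty} W_\pi f_k \, \psi~
 =~\Bigl( \int_{V^*} d\eta~\rho(\eta) \Bigr) \psi~=~\psi~,
\]
as claimed; equivalently, any $\psi$ annihilated by all $W_\pi f$ must vanish. Thus $W_\pi$ is a nondegenerate $*$-representation of $\mathcal{S}_\sigma$, and since $\mathcal{S}_\sigma$ is dense in $\mathcal{E}_\sigma$ and $W_\pi$ extends to $\mathcal{E}_\sigma$ by continuity, the span of $\, W_\pi(\mathcal{E}_\sigma) \mathfrak{H}_\pi \,$ already contains the dense span of $\, W_\pi(\mathcal{S}_\sigma) \mathfrak{H}_\pi$, so nondegeneracy is inherited by $\mathcal{E}_\sigma$.

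I do not expect any genuine obstacle here: the only step that needs a moment's attention is the interchange of limit and integral, but the required domination is immediate from $\, \|\pi(h)\| = 1$. (One could alternatively deduce the statement from the approximate identity $(\chi_k)$ for $\mathcal{S}_\sigma$ constructed in Proposition~\ref{prp:APRIHS} of the appendix, using $\, W_\pi(\chi_k \star_\sigma f) = W_\pi \chi_k \; W_\pi f \,$ together with the fact that $\, \chi_k \star_\sigma f \to f \,$ in the Schwartz and hence the $C^*$-topology; but the mollifier computation above is shorter and self-contained.)
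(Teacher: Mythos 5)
Your argument is correct and is essentially the paper's own proof: both show that smearing $\pi$ against an approximate identity $\check{f}$ concentrated at $0 \in V^*$ (with total integral $1$) makes $W_\pi f\,\psi$ converge to $\psi$, the only cosmetic difference being that the paper fixes one compactly supported nonnegative $\check{f}$ inside an $\epsilon$-neighborhood and estimates directly, whereas you scale a fixed $\rho$ and invoke dominated convergence. The passage to $\mathcal{E}_\sigma$ by density is likewise the intended one, so nothing is missing.
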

\begin{proof}
 Given any vector $\psi$ in the Hilbert space $\mathfrak{H}$ of
 the representations~$\pi$ and $W_\pi$ and any $\, \epsilon > 0 \,$,
 strong continuity of~$\pi$ implies the existence of an open
 neighborhood $U^*$ of~$0$ in~$V^*$ such that
 \[
  \| \pi(\xi) \psi - \psi \|~<~\epsilon
  \qquad \mbox{for $\, \xi \in U^*$}~,
 \]
 since $\, \pi(0) = 1$. Now choose $\, f \in \mathcal{S}(V) \,$
 such that $\, \check{f} \in \mathcal{S}(V^*) \,$ is nonnegative,
 with integral normalized to~$1$, and has compact support
 contained in~$U^*$. Then
 \[
  \| (W_\pi f) \psi - \psi \|~
      =    ~\biggl\| \, \int_{V^*} d\xi~\check{f}(\xi) \; \pi(\xi) \psi \,
                     - \, \psi \, \biggr\|~
  \leqslant~\int_{V^*} d\xi~\check{f}(\xi)~\| \pi(\xi) \psi - \psi \|~
      <    ~\epsilon~.
 \]
\end{proof}

\noindent
As a result, these $*$-representations extend to (unital) $*$-%
representations of the Heisenberg-Rieffel algebra $\mathcal{B}_\sigma$
and of the unital Heisenberg $C^*$-algebra $\mathcal{H}_\sigma$,
respectively, which will again be denoted by~$W_\pi$.

Conversely, given any nondegenerate $C^*$-representation $W$ of~%
$\mathcal{E}_\sigma$, we can extend it uniquely to a (unital) $C^*$-%
representation of~$\mathcal{H}_\sigma$, again denoted by~$W$, which
restricts to a unitary representation $\pi_W$ of~$H_\sigma$ defined
according to
\begin{equation}
 \pi_W(\xi)~=~W \bigl(e_\xi)~,
\end{equation}
where $\, e_\xi \in \mathcal{B}_\sigma \,$ denotes the phase function
given by $\, e_\xi(v) = e^{i \langle \xi,v \rangle}$.
To show that $\pi_W$ is automatically strongly continuous, we note that,
according to equations~(\ref{eq:FOURT1}) and~(\ref{eq:WMOY7}), we have,
for any $\, f \in \mathcal{S}_\sigma$,
\[
 (e_\xi \star_\sigma f)(x)~
 =~e^{i \langle \xi,x \rangle} \, f(x - {\textstyle \frac{1}{2}} \sigma^\sharp \xi)~,
\]
so $e_\xi \star_\sigma f$ converges to $f$ as $\xi$ tends to zero,
in the Schwartz topology and hence also in the $C^*$-topology.
Now since $W$ is supposed to be nondegenerate and $\mathcal{S}_\sigma$
is dense in~$\mathcal{E}_\sigma$, every vector in $\mathfrak{H}_W$ can
be approximated by vectors of the form $W(f)\psi$ where $\, f \in
\mathcal{S}_\sigma$ \linebreak and $\, \psi \in \mathfrak{H}_W$.
But on such vectors, we have strong continuity, since for any
$\, f \in \mathcal{S}_\sigma \,$ and any $\, \psi \in \mathfrak{H}_W$,
$\, \pi_W(\xi) W(f)\psi = W(e_\xi \star_\sigma f) \psi \,$ tends to
$W(f)\psi$ as $\xi$ tends to zero.

Finally, it is easy to see that composing the two operations of passing
(a) from a strongly continuous unitary representation $\pi$ of~$H_\sigma$
to a nondegenerate $C^*$-representation $W_\pi$ of~$\mathcal{E}_\sigma$ and
(b) from a nondegenerate $C^*$-representation $W$ of~$\mathcal{E}_\sigma$
to a strongly continuous unitary representation $\pi_W$ of~$H_\sigma$, in
any order, reproduces the original representation, so we have proved
\begin{thm}[Correspondence theorem] \label{thm:REP1}
 There is a bijective correspondence between the strongly continuous
 unitary representations of the Heisenberg group~$H_\sigma$ and the
 non\-degenerate $C^*$-representations of the nonunital Heisenberg
 $C^*$-algebra~$\mathcal{E}_\sigma$. Moreover, this correspondence
 takes irreducible representations to irreducible representations.
\end{thm}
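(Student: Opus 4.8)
The plan is to give substance to the two observations made just before the statement: that the Weyl quantization map $\pi \mapsto W_\pi$ and the ``restriction to phase functions'' map $W \mapsto \pi_W$ are well defined (the former lands among nondegenerate $C^*$-representations of $\mathcal{E}_\sigma$ by Lemma~\ref{lem:NONDEG}, the latter among strongly continuous unitary representations of $H_\sigma$ by the discussion preceding the statement), and that they are mutually inverse; then the irreducibility assertion is treated separately. So what remains is to verify the two composition identities $\pi_{W_\pi} = \pi$ and $W_{\pi_W} = W$, and to relate irreducibility of $W$ to that of $\pi_W$.

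To see that $\pi_{W_\pi} = \pi$, I would compute, for $f \in \mathcal{S}_\sigma$, the product $W_\pi(e_\xi) \, W_\pi(f)$. Since $W_\pi$ is multiplicative on $\mathcal{B}_\sigma$ and, by equation~(\ref{eq:WMOY7}), $(e_\xi \star_\sigma f)(x) = e^{i \langle \xi,x \rangle} f(x - \tfrac{1}{2} \sigma^\sharp \xi)$, a short computation with the inverse Fourier transform together with the commutation relation~(\ref{eq:CCR1}) gives $W_\pi(e_\xi) \, W_\pi(f) = W_\pi(e_\xi \star_\sigma f) = \pi(\xi) \, W_\pi(f)$. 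Because $W_\pi$ is nondegenerate, the vectors $W_\pi(f)\psi$ with $f \in \mathcal{S}_\sigma$, $\psi \in \mathfrak{H}_{W_\pi}$, span a dense subspace, so $W_\pi(e_\xi) = \pi(\xi)$, that is, $\pi_{W_\pi} = \pi$.

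The reverse composition $W_{\pi_W} = W$ rests on the identity $f = \int_{V^*} d\xi~\check{f}(\xi) \, e_\xi$, valid for every $f \in \mathcal{S}_\sigma$ and read as a norm-convergent integral in $\mathcal{H}_\sigma$: it converges absolutely because each $e_\xi$ has $C^*$-norm $1$ (its image $\pi^{\mathrm{reg}}(\xi)$ under the faithful representation $L_\sigma$ is unitary) and $\check{f} \in L^1(V^*)$, and its value is $f$ because applying the faithful representation $L_\sigma$ and using $L_\sigma f = \int_{V^*} d\xi~\check{f}(\xi) \, \pi^{\mathrm{reg}}(\xi)$ reproduces $L_\sigma f$. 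Applying the norm-continuous $C^*$-representation $W$ (extended to $\mathcal{H}_\sigma$) and pulling it through the integral yields $W(f) = \int_{V^*} d\xi~\check{f}(\xi) \, W(e_\xi) = \int_{V^*} d\xi~\check{f}(\xi) \, \pi_W(\xi) = W_{\pi_W}(f)$; since $\mathcal{S}_\sigma$ is dense in $\mathcal{E}_\sigma$ and both sides are continuous, $W_{\pi_W} = W$. This establishes the bijection.

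For the final sentence, I would extend $W$ to the multiplier algebra $\mathcal{H}_\sigma = M(\mathcal{E}_\sigma)$ (Theorem~\ref{thm:HMULTA}) on the same Hilbert space and observe that the von Neumann algebra generated by $W(\mathcal{E}_\sigma)$ coincides with the one generated by $\{ \pi_W(\xi) \mid \xi \in V^* \}$: one inclusion holds because, by the integral above, each $W(f)$ is a weak limit of linear combinations of the operators $\pi_W(\xi)$, and the other because $\pi_W(\xi) = W(e_\xi)$ lies in $W(\mathcal{H}_\sigma) \subseteq W(\mathcal{E}_\sigma)''$ by nondegeneracy. Hence $W(\mathcal{E}_\sigma)' = \{ \pi_W(\xi) \}'$, and since passing from the $\pi_W(\xi)$ to the full group representation $\pi_W$ of $H_\sigma$ only adjoins the scalars $e^{i\lambda} 1$, irreducibility of $W$ is equivalent to irreducibility of $\pi_W$; the converse implication follows from $\pi = \pi_{W_\pi}$ and $W = W_{\pi_W}$. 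The step requiring the most care is precisely the identity $f = \int \check{f}(\xi) \, e_\xi \, d\xi$ at the level of the $C^*$-algebra and the attendant interchange of a $C^*$-representation with a vector-valued integral, together with the von Neumann algebra identification used for irreducibility; the remaining manipulations are routine.
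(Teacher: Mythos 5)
Your overall architecture is the intended one (the paper itself leaves the two composition identities as ``easy to see''): Weyl quantization in one direction, restriction to the Weyl elements $e_\xi$ via the multiplier algebra in the other, and your verification of $\pi_{W_\pi}=\pi$ through $W_\pi(e_\xi\star_\sigma f)=\pi(\xi)\,W_\pi f$ plus nondegeneracy is correct. The genuine gap is in the step you yourself single out: the identity $f=\int_{V^*}d\xi~\check f(\xi)\,e_\xi$ does \emph{not} hold as a norm-convergent (Bochner) integral in $\mathcal{H}_\sigma$, and absolute integrability of $\|\check f(\xi)\,e_\xi\|=|\check f(\xi)|$ is not enough to produce one. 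The map $\xi\mapsto e_\xi$ is nowhere norm-continuous: already for $\sigma=0$ one has $\|e_\xi-e_\eta\|_\infty=2$ for $\xi\neq\eta$, and in general $e_\xi\star_\sigma e_{-\eta}$ is, up to a phase, a Weyl element whose image under the faithful representation $L_\sigma$ has full-circle spectrum, so again $\|e_\xi-e_\eta\|=2$. Hence the integrand is not essentially separably valued and not Bochner measurable. Worse, any norm-convergent reading would place $f$ in the closed linear span of the $e_\xi$, i.e.\ in the Weyl algebra $\overline{\Delta(V^*,\sigma)}$; for $\sigma=0$ that span consists of almost periodic functions, which meet $C_0(V)\supset\mathcal{S}(V)$ only in $0$, so the identity is actually false in that reading. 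Consequently the interchange ``pull $W$ through the integral'' is unjustified, and the same flaw propagates to the first inclusion of your irreducibility argument (``each $W(f)$ is a weak limit of combinations of $\pi_W(\xi)$''), which quietly invokes the same formula.

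The repair is routine and is exactly how the paper sets up $W_\pi$ in the first place, namely vector-wise: for $g\in\mathcal{S}_\sigma$ the map $\xi\mapsto e_\xi\star_\sigma g$ \emph{is} continuous into $\mathcal{S}_\sigma$ (hence into $\mathcal{E}_\sigma$), with $\|e_\xi\star_\sigma g\|=\|g\|$, and $\int_{V^*}d\xi~\check f(\xi)\,(e_\xi\star_\sigma g)=f\star_\sigma g$ converges in norm; applying $W$ and a vector $\psi$ gives $W(f)\,W(g)\psi=W(f\star_\sigma g)\psi=\int_{V^*}d\xi~\check f(\xi)\,\pi_W(\xi)\,W(g)\psi=W_{\pi_W}(f)\,W(g)\psi$, and nondegeneracy together with density of $\mathcal{S}_\sigma$ in $\mathcal{E}_\sigma$ yields $W_{\pi_W}=W$. (Equivalently, interpret the integral as converging in the strict topology of $M(\mathcal{E}_\sigma)=\mathcal{H}_\sigma$ and use that the extension of a nondegenerate representation to the multiplier algebra is strictly continuous on bounded sets.) With this weak/vector-wise form of the integral, your commutant identification $W(\mathcal{E}_\sigma)''=\{\pi_W(\xi)\}''$ and the irreducibility conclusion go through as you describe.
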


As a corollary, we can state a classification theorem for irreducible
representations which is based on one of von Neumann's famous theorems,
according to which there is a \emph{unique} such representation,
generally known as the \emph{Schr\"odinger representation of the
canonical commutation relations}, provided that $\sigma$ is
nondegenerate.
To handle the degenerate case, i.e., when $\sigma$ has a nontrivial
null space, denoted by $\, \ker \sigma$, we use the same trick as above:
choose a subspace $W^*$ of~$V^*$ complementary to $\, \ker \sigma$ (see
equation~(\ref{eq:DECOM2})), so that the restriction $\omega$ of~$\sigma$
to~$\, W^* \times W^* \,$ is nondegenerate, and introduce the corresponding
Heisenberg algebra $\, \mathfrak{h}_\omega = W^* \oplus \mathbb{R} \,$ and
Heisenberg group $\, H_\omega = W^* \times \mathbb{R} \,$ to decompose the
original ones into the direct sum $\, \mathfrak{h}_\sigma = \ker \sigma
\oplus \mathfrak{h}_\omega \,$ of two commuting \mbox{ideals} and $\, H_\sigma
= \ker \sigma \times H_\omega \,$ of two commuting normal subgroups.%
\footnote{As is common practice in the abelian case, we consider
the same vector space $\, \ker \sigma$ as an abelian Lie algebra
in the first case and as an (additively written) abelian Lie group
in the second case, so that the exponential map becomes the identity.}
It follows that every (strongly continuous unitary) representation
of~$H_\sigma$ is the tensor product of a (strongly continuous unitary)
representation of~$\, \ker \sigma$ and a (strongly continuous unitary)
representation of~$H_\omega$, where the first is irreducible if and
only if each of the last two is irreducible.
Now since $\, \ker \sigma$ is abelian, its irreducible representations
are one-dimensional and given by their character, which proves the
following
\begin{thm}[Classification of irreducible representations] \label{thm:IRREP1}
 With the notation above, the strongly continuous, unitary, irreducible
 representations of the Heisenberg group~$H_\sigma$, or equivalently, the
 irreducible representations of the nonunital Heisenberg $C^*$-algebra
 $\mathcal{E}_\sigma$, are classified by their \textbf{highest weight},
 which is a vector $v$ in~$V$, or more precisely, its class~$[v]$ in
 the quotient space $V/(\ker \sigma)^\perp$, such that
 \[
  \pi_{[v]}(\xi,\eta)~=~e^{i\langle\xi,v\rangle} \, \pi_\omega(\eta)
  \qquad \mathrm{for}~~\xi \in \ker \sigma \,,\,
         \eta \in H_\omega~,
 \]
 where $\pi_\omega$ is of course the Schr\"odinger representation of~%
 $H_\omega$.
\end{thm}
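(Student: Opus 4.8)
The plan is to combine Theorem~\ref{thm:REP1} with the direct-sum decompositions $\mathfrak{h}_\sigma = \ker\sigma \oplus \mathfrak{h}_\omega$ and $H_\sigma = \ker\sigma \times H_\omega$ recorded just before the statement, reducing everything to the nondegenerate case handled by von Neumann's uniqueness theorem. By Theorem~\ref{thm:REP1}, the irreducible $C^*$-representations of $\mathcal{E}_\sigma$ correspond bijectively to the irreducible strongly continuous unitary representations of $H_\sigma$, so it suffices to classify the latter.

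First I would justify the tensor-product factorization: since $H_\sigma = \ker\sigma \times H_\omega$ is a direct product of locally compact groups, every irreducible strongly continuous unitary representation of $H_\sigma$ decomposes (externally) as $\rho \otimes \pi_\omega'$, where $\rho$ is an irreducible representation of the abelian group $\ker\sigma$ and $\pi_\omega'$ an irreducible representation of $H_\omega$; conversely every such external tensor product is irreducible. (One can invoke the standard theory of tensor products of representations of direct-product groups, using that both factors are type~I.) Next, the nondegeneracy of $\omega$ on $W^* \times W^*$ puts us exactly in the setting of von Neumann's theorem, so $\pi_\omega'$ is unitarily equivalent to the Schr\"odinger representation $\pi_\omega$, unique up to equivalence. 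Finally, since $\ker\sigma$ is abelian, its irreducible unitary representations are one-dimensional and parametrized by the Pontryagin dual $\widehat{\ker\sigma}$; identifying $\ker\sigma$ with a subspace of $V^*$ via the decomposition~(\ref{eq:DECOM2}), a character is of the form $\xi \mapsto e^{i\langle\xi,v\rangle}$ for some $v \in V$, and two vectors $v, v'$ give the same character precisely when $v - v'$ annihilates $\ker\sigma$, i.e.\ when $[v] = [v']$ in $V/(\ker\sigma)^\perp$. Assembling the factors yields the stated formula for $\pi_{[v]}$.

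The remaining point is to check that the map $[v] \mapsto \pi_{[v]}$ is a genuine bijection onto equivalence classes: surjectivity is immediate from the decomposition argument, and injectivity follows because the restriction of $\pi_{[v]}$ to the central subgroup $\ker\sigma$ (embedded via $\xi \mapsto (\xi,0)$) already recovers the character, hence $[v]$, while the $H_\omega$-factor is forced to be $\pi_\omega$. One should also note the compatibility with the two central extensions: the $\mathbb{R}$-coordinate of $H_\sigma$ acts on an irreducible representation by a fixed phase $e^{it}$ (Schur's lemma plus the normalization built into the Schr\"odinger representation), which is why only the ``momentum'' data $v$ and the $\omega$-block survive in the classification.

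The main obstacle I expect is purely bookkeeping: making the identification $\ker\sigma \subset V^*$ and the quotient $V/(\ker\sigma)^\perp$ precise enough that ``highest weight'' is well-defined independently of the choice of complementary subspace $W^*$ used in~(\ref{eq:DECOM2}), and verifying that the tensor-product decomposition of representations of $H_\sigma$ is compatible with the central structure (so that the single central parameter is not accidentally split between the two factors). Once the direct-product and abelian-duality facts are invoked cleanly, the ``highest weight'' terminology is justified by viewing the abelian directions of $\mathfrak{h}_\sigma$ as a Cartan-type subalgebra on which the representation acts by the scalar $i\langle\xi,v\rangle$.
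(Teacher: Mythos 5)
Your proposal is correct and follows essentially the same route as the paper: invoke the correspondence theorem (Theorem~\ref{thm:REP1}), split $H_\sigma = \ker\sigma \times H_\omega$ using the complementary subspace $W^*$ of~(\ref{eq:DECOM2}), factor irreducible representations as external tensor products, apply von Neumann's uniqueness theorem to the nondegenerate factor, and identify characters of the abelian factor $\ker\sigma$ with classes in $V/(\ker\sigma)^\perp$. Your extra remarks on injectivity, well-definedness, and the central character are just slightly more explicit bookkeeping than the paper provides.
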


It may be worthwhile to point out that the correspondence of Theorem~%
\ref{thm:REP1} does \emph{not} hold when we replace $\mathcal{E}_\sigma$
by~$\mathcal{H}_\sigma$, simply because $\mathcal{H}_\sigma$ admits $C^*$-%
representations whose restriction to~$\mathcal{E}_\sigma$ is trivial:
just consider any representation of the corona algebra $\mathcal{H}_\sigma/%
\mathcal{E}_\sigma$. That is why it is important to consider not only
$\mathcal{H}_\sigma$ but also~$\mathcal{E}_\sigma$.

To conclude this section, we would like to comment on the difference between
our definition of the Heisenberg $C^*$-algebra and others that can be found
in the literature~-- more specifically, the Weyl algebra $\, \overline%
{\Delta(V^*,\sigma)} \,$ of Refs~\cite{MA,MSTV} and the resolvent algebra
$\, \mathcal{R}(V^*,\sigma) \,$ of Ref.~\cite{BG1}: these are defined as
the universal enveloping $C^*$-algebras of the $*$-algebra $\, \Delta%
(V^*,\sigma) \,$ generated by the phase functions $e_\xi$ and of the
$*$-algebra $\, \mathcal{R}_0(V^*,\sigma) \,$ generated by the resolvent
functions~$R_\xi$, respectively, where $\, e_\xi(v) = e^{i \langle \xi,v \rangle}$,
as before, and similarly, $R_\xi(v) = (i - \langle \xi,v \rangle)^{-1}$.

The main problem with these constructions is that the resulting $C^*$-%
algebras are, in a certain sense, ``too small'', as indicated by the fact
that they accomodate lots of ``purely algebraic'' representations
and one has to restrict to a suitable class of ``regular'' representations in
order to establish a bijective correspondence with the usual representations
of the CCRs: nonregular representations do not even allow to define the
``infinitesimal'' operators that would be candidates for satisfying the CCRs.
Moreover, the choice of the respective generating $*$-subalgebras
$\, \Delta(V^*,\sigma) \,$ and $\, \mathcal{R}_0(V^*,\sigma) \,$ is
to a certain extent arbitrary, and even though they admit maximal
$C^*$-norms, they do \emph{not} in general admit a \emph{unique}
$C^*$-norm.
What is remarkable about the extensions proposed here, using the larger $C^*$-%
algebras $\mathcal{E}_\sigma$ or $\mathcal{H}_\sigma$, together with the larger
generating $*$-subalgebras $\mathcal{S}_\sigma$ or $\mathcal{B}_\sigma$, is that
this procedure eliminates the unwanted representations (whose inclusion would
invalidate the analogue of Theorem~\ref{thm:IRREP1} classifying the irreducible
representations) as well as the ambiguity in the choice of $C^*$-norm.

On the other hand, it must be emphasized that our approach is restricted
to the case of finite-dimensional Poisson vector spaces (quantum mechanics):
the question of whether, and how, it is possible to extend it to infinite-%
dimensional situations (quantum field theory) is presently completely open.

\section{Bundles of $*$-Algebras and \emph{C}$^*$-Completions}
\label{sec:*bun}

In the present section we want to introduce concepts that will allow
us to extend the process of $C^*$-completion of $*$-algebras discussed
in Section~\ref{sec:*alg} to bundles of $*$-algebras.

To begin with, we want to digress for a moment to briefly discuss an
important question of terminology, generated by the indiscriminate and
confusing use of the term ``bundle'' in the literature on the subject.

Assume that $(V_x)_{x \in X}$ is a family of sets indexed by the points
$x$ of some other set~$X$.
Then we may introduce the set $V$ defined as their disjoint union,
\begin{equation} \label{eq:FIBR1}
 V~=~\bigcup_{x \in X}^{\scriptscriptstyle{\bullet}} V_x~,
\end{equation}
together with the surjective map $\, \rho: V \longrightarrow X \,$
that takes $V_x$ to $x$: this defines a \emph{``bundle''}\/ with
\emph{total space}\/~$V$, \emph{base space}\/~$X$ and \emph%
{projection}\/~$\rho$, with $\, V_x = \rho^{-1}(x) \,$ as the
\emph{fiber over}\/ the point~$x$.
The question is what additional conditions should be imposed on this
kind of structure in order to allow us to remove the quotation marks
on the expression ``bundle''.
For example, in the context of topology, it is usually required that
both $V$ and~$X$ should be topological spaces and that $\rho$ should
be continuous and open.
Similarly, in the context of differential geometry, one requires
that, in addition, both $V$ and~$X$ should be manifolds and that
$\rho$ should be a submersion.
Of course, special care must be taken when these manifolds are
infinite-dimensional, since dealing with these is a rather touchy
business; in particular, the standard theory that works in the
context of Banach spaces and manifolds, for which we may refer
to~\cite{LA}, does not apply to more generally locally convex
spaces and manifolds, for which one must resort to more
sophisticated techniques such as the ``convenient calculus''
developed in~\cite{KM}.

Within this context, a central role is played by the condition of
\emph{local triviality}\/, which requires the existence of a fixed
topological space or of a fixed manifold $V_0$, called the \emph%
{typical fiber}\/, and of some covering of the base space by open
subsets such that for each one of them, say~$U$, the subset
$\rho^{-1}(U)$ of the total space is homeomorphic (in the case
of topological spaces) or diffeomorphic (in the case of manifolds)
to the cartesian product \mbox{$\, U \times V_0 \,$:} in this case,
one says that $V$ is a \emph{fiber bundle} over~$X$ and refers to
the afore\-mentioned homeomorphisms or diffeomorphisms as \emph%
{local trivializations}\/. When $V_0$ and each of the fibers $V_x$
($x \in X$) come with a certain (fixed) type of additional
structure and local trivializations can be found which preserve
that structure, an appropriate reference is incorporated into the
terminology: for example, one says that $V$ is a \emph{vector bundle}
over~$X$ when $V_0$ and each of the fibers $V_x$ ($x \in X$) are vector
spaces and local trivializations can be chosen to be fiberwise linear.
Thus the standard terminology used in topology and differential geometry
suggests that fiber bundles, vector bundles etc.~-- and in particular,
$C^*$-algebra bundles~-- should be locally trivial.

Unfortunately, this convention is not followed universally. In particular,
in the theory of operator algebras, it is necessary to allow for a greater
degree of generality, since there appear important examples where local
triviality fails and where even some of the structure maps may fail to
be continuous. Therefore, let us state explicitly what is required:
\begin{dfn} \label{def:BUNALG}
 A \textbf{bundle of locally convex $*$-algebras} over a locally compact
 topo\-logical space~$X$ is a topological space $\mathcal{A}$ together with a
 surjective continuous and open map $\, \rho: \mathcal{A} \longrightarrow X$,
 equipped with the following structures: (a) operations of fiberwise addition,
 scalar multiplication, multiplication and involution that turn each fiber
 $\, \mathcal{A}_x = \rho^{-1}(x) \,$ into a $*$-algebra and are such that
 the corresponding maps
 \[
  \begin{array}{ccc}
   \mathcal{A} \times_X \mathcal{A} & \longrightarrow & \mathcal{A} \\[1mm]
               (a_1,a_2)             &   \longmapsto   &  a_1 + a_2
  \end{array} \quad , \quad
  \begin{array}{ccc}
   \mathbb{C} \times \mathcal{A} & \longrightarrow & \mathcal{A} \\[1mm]
             (\lambda,a)         &   \longmapsto   &  \lambda a
  \end{array}
 \]
 and
 \[
  \begin{array}{ccc}
   \mathcal{A} \times_X \mathcal{A} & \longrightarrow & \mathcal{A} \\[1mm]
               (a_1,a_2)             &   \longmapsto   &   a_1 a_2
  \end{array} \quad , \quad
  \begin{array}{ccc}
   \mathcal{A} & \longrightarrow & \mathcal{A} \\[1mm]
        a      &   \longmapsto   &     a^*
  \end{array}
 \]
 where $\, \mathcal{A} \times_X \mathcal{A} = \{ (a_1,a_2) \in
 \mathcal{A} \times \mathcal{A}~|~\rho(a_1) = \rho(a_2) \} \,$ is the
 fiber product of~$\mathcal{A}$ with itself over~$X$, are continuous,%
 \footnote{Actually, a simple generalization of an argument that
 can be found in~\cite[Proposition~C.17, p.~361]{Wil} shows that
 it is sufficient to require that scalar multiplication should be
 continuous in the second variable, i.e., for each $\, \lambda \in
 \mathbb{C}$, the map $\, \mathcal{A} \longrightarrow \mathcal{A}$,
 $a \longrightarrow \lambda a \,$ is continuous: this condition is
 often easier to check in practice, but it already implies joint
 continuity.}
 and (b) a directed set $\Sigma$ of nonnegative functions\,%
 \footnote{The condition of $\Sigma$ being directed refers to the natural
 order on the set of all nonnegative functions on~$\mathcal{A}$, defined
 pointwise.}
 $\, s: \mathcal{A} \longrightarrow \mathbb{R}$ \linebreak which,
 at every point $x$ in $X$, provides a directed set $\, \Sigma_x
 = \{ s|_{\mathcal{A}_x} \,|\, s \in \Sigma \} \,$ of seminorms
 on the fiber $\, \mathcal{A}_x = \rho^{-1}(x) \,$ turning it
 into a locally convex $*$-algebra; we shall refer to the
 functions $s$ in~$\Sigma$ as \textbf{fiber seminorms} on~%
 $\mathcal{A}$.
 Moreover, when each of these fiber seminorms is either continuous
 or else just upper semicontinuous, and when taken together they
 \mbox{satisfy} the additional continuity condition that any net
 $(a_i)_{i \in I}$ in~$\mathcal{A}$ such that $\, s(a_i) \to 0$
 \linebreak
 for every $\, s \in \Sigma \,$ and $\, \rho(a_i) \to x \,$ for
 some $\, x \in X \,$ actually converges to $\, 0_x \in \mathcal{A}_x$,
 then we say that $\mathcal{A}$ is either a \textbf{continuous} or else
 an \textbf{upper semicontinuous bundle} of locally convex $*$-algebras,
 respectively.
 Finally, we shall say that such a bundle $\mathcal{A}$ is
 \textbf{unital} if all of its fibers $\mathcal{A}_x$ are
 $*$-algebras with unit and, in addition, the unit section
 \[
  \begin{array}{ccc}
   X & \longrightarrow & \mathcal{A} \\[1mm]
   x &   \longmapsto   &     1_x
  \end{array}
 \]
 is continuous. Special cases are:
 \begin{itemize}
  \item $\mathcal{A}$ is a \textbf{bundle of Fr\'echet $*$-algebras}
        if $\Sigma$ is countable and each fiber is complete in the
        induced topology: in this case, $\Sigma$ can (and will) be
        arranged in the form of an increasing sequence.
  \item $\mathcal{A}$ is a \textbf{bundle of Banach $*$-algebras}
        if $\Sigma$ is finite and each fiber is complete in the
        induced topology: in this case, $\Sigma$ can (and will)
        be replaced by a single function $\, \|.\| : \mathcal{A}
        \longrightarrow \mathbb{R}$, called the fiber norm, which
        induces a Banach $*$-algebra norm on each fiber.
  \item $\mathcal{A}$ is a \textbf{bundle of C\,$^*$-algebras},
        or simply \textbf{C\,$^*$-bundle}, if it is a bundle of
        Banach $*$-algebras whose fiber norm induces a $C^*$-%
        norm on each fiber.
 \end{itemize}
\end{dfn}

\noindent
In particular, according to the convention adopted in this paper, bundles
of $*$-algebras over~$X$ need not be locally trivial and hence the property
of local triviality~-- either in the sense of topology when $X$ is a
topological space (continuous transition functions) or in the sense
of differential geometry when $X$ is a manifold (smooth transition
functions)~-- will be stated explicitly when it is satisfied and relevant.

On a historical side, we note that a first version of this definition
was formulated by \linebreak Dixmier~\cite{Dix}, through his notion of
a ``continuous field of $C^*$-algebras''. Somewhat later, Fell introduced
the concept of a continuous $C^*$-bundle (see, e.g., \cite[Definition~8.2,
p.~580]{FD}), providing an (ultimately) equivalent but intuitively more
appealing approach. Finally, it was observed that most of the important
results continue to hold with almost no changes for upper semicontinuous
$C^*$-bundles, the main difference being that in this case, the total
space $\mathcal{A}$ may fail to be Hausdorff. The extension proposed
here, to bundles whose fibers are more general locally convex $*$-%
algebras (of various types), seems natural and will be useful for
what follows.

The additional continuity condition formulated in the above definition
guarantees that the topology on the total space~$\mathcal{A}$ is uniquely
determined by the set of fiber seminorms~$\Sigma$; this follows directly
from the following generalization of a theorem of Fell:
\begin{thm}[Topology of $*$-bundles] \label{thm:FELL1}
 Assume that $(\mathcal{A}_x)_{x \in X}$ is a family of $*$-algebras
 indexed by the points $x$ of a locally compact topological space~$X$,
 and consider the disjoint union
 \begin{equation} \label{eq:STAFIB1}
  \mathcal{A}~= \bigcup_{x \in X}^{\scriptscriptstyle{\bullet}} \mathcal{A}_x
 \end{equation}
 as a ``bundle'' over~$X$ (in the purely set-theoretical sense).
 Assume further that $\Sigma$ is a directed set of fiber seminorms
 on~$\mathcal{A}$ turning each fiber $\mathcal{A}_x$ of~$\mathcal{A}$
 into a locally convex $*$-algebra (Fr\'echet $*$-algebra\,/\,%
 Banach $*$-algebra\,/\,$C^*$-algebra) and that $\Gamma$ is a
 $*$-algebra of sections of this ``bundle'', satisfying the
 following properties:
 \begin{enumerate}[(a)]
  \item For each section $\, \varphi \in \Gamma \,$ and each fiber seminorm
        $\, s \in \Sigma$, the function $\, X \longrightarrow \mathbb{R}$,
        $x \longmapsto s(\varphi(x)) \,$ is upper semicontinuous (or
        continuous).
  \item For each point $x$ in~$X$, the $*$-subalgebra $\, \Gamma_x
        = \{ \varphi(x) \, | \, \varphi \in \Gamma \} \,$ of~%
        $\mathcal{A}_x$ is dense in~$\mathcal{A}_x$.
 \end{enumerate}
 Then there is a unique topology on~$\mathcal{A}$ turning it into an upper
 semicontinuous (or continuous) bundle of locally convex $*$-algebras
 (Fr\'echet $*$-algebras\,/\,Banach $*$-algebras\,/\,$C^*$-algebras)
 over~$X$, repectively, such that $\Gamma$ becomes a $*$-subalgebra
 of the $*$-algebra $\Gamma(X,\mathcal{A})$ of all continuous sections
 of~$\mathcal{A}$.
\end{thm}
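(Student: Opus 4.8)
The plan is to follow the classical construction of Fell and Dixmier \cite{FD,Dix}, adapting it from $C^*$-fibers to arbitrary locally convex $*$-algebra fibers. Concretely, I would define the topology on~$\mathcal{A}$ by declaring as a basis the family of ``tubes''
\[
 W(\varphi,U,s,\varepsilon)~=~\bigl\{\, a \in \mathcal{A}~\big|~\rho(a) \in U, ~~
 s\bigl(a - \varphi(\rho(a))\bigr) < \varepsilon \,\bigr\}~,
\]
indexed by a section $\varphi \in \Gamma$, an open set $U \subseteq X$, a fiber seminorm $s \in \Sigma$ and a number $\varepsilon > 0$. The first step is to check that this really is a basis: since $\Sigma$ is directed, two seminorm conditions can always be absorbed into one, so that the genuine input is hypothesis~(b), which guarantees that for any point $a$ lying in an intersection $W(\varphi_1,U_1,s_1,\varepsilon_1) \cap W(\varphi_2,U_2,s_2,\varepsilon_2)$, with $\rho(a) = x$, there is a section $\psi \in \Gamma$ approximating~$a$ at~$x$ in a seminorm $\, s \geqslant s_1,s_2$; hypothesis~(a), through the upper semicontinuity of $\, y \mapsto s\bigl((\psi-\varphi_j)(y)\bigr) \,$ at~$x$, then yields a smaller tube around~$a$ contained in the given intersection. (In the continuous case with separated fibers the resulting total space is Hausdorff; in the merely upper semicontinuous case it need not be.)

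Once the topology is in place, a sequence of verifications follows, most of them routine adaptations of the arguments in~\cite{FD}. One checks that $\rho$ is continuous (each $\rho^{-1}(U)$ is a union of tubes) and open (indeed $\rho\bigl(W(\varphi,U,s,\varepsilon)\bigr) = U$); that the subspace topology induced on each fiber~$\mathcal{A}_x$ is exactly its given locally convex topology, since the tubes meet~$\mathcal{A}_x$ in seminorm balls centred at the points of the dense set~$\Gamma_x$; that each fiber seminorm $\, s: \mathcal{A} \to \mathbb{R} \,$ is upper semicontinuous (or continuous, under the corresponding form of the hypothesis), again via~(a); and that the additional continuity condition of Definition~\ref{def:BUNALG} holds, which is immediate from the triangle inequality together with the upper semicontinuity of $\, x \mapsto s(\varphi(x))$. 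Likewise, each $\varphi \in \Gamma$ becomes a continuous section, because the preimage under~$\varphi$ of a tube $W(\psi,U,s,\varepsilon)$ is $\, \{x \in U \,|\, s((\varphi-\psi)(x)) < \varepsilon\}$, open by~(a). The continuity of the algebraic operations is handled by the same device: to treat, say, multiplication at a pair $(a_1,a_2)$ over~$x$ with $a_1 a_2 = a$, one picks $\varphi_i \in \Gamma$ approximating~$a_i$ at~$x$, notes that $\varphi_1\varphi_2 \in \Gamma$ approximates~$a$ at~$x$ arbitrarily well (by continuity of multiplication in the single fiber~$\mathcal{A}_x$), so that the tubes built from such product sections form a neighbourhood basis at~$a$, and then pushes a product of sufficiently small tubes around the~$a_i$ into a prescribed tube around~$a$; addition, scalar multiplication and involution are similar but easier.

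The step I expect to require the most care is precisely this last one in the general locally convex setting: unlike in the $C^*$ or Banach case, one cannot bound $\, s(b_1 b_2 - \varphi_1(y)\varphi_2(y)) \,$ simply by invoking sub-multiplicativity of a norm, and the fiberwise continuity of multiplication is a priori not uniform as the base point~$y$ ranges over a neighbourhood of~$x$. The resolution is to exploit hypothesis~(a) once more: upper semicontinuity of $\, y \mapsto s(\varphi_i(y)) \,$ makes the section seminorms locally bounded, and this supplies exactly the uniformity in the base point needed to control the ``cross terms'' $\varphi_1(y)\,c_2$ and $c_1\,\varphi_2(y)$ coming from the decomposition $\, b_i = \varphi_i(y) + c_i$. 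Finally, for uniqueness one argues that in \emph{any} bundle topology on~$\mathcal{A}$ compatible with~$\Gamma$ and with the fiber seminorms~$\Sigma$, the tubes $W(\varphi,U,s,\varepsilon)$ are forced to be open --- from upper semicontinuity of the fiber seminorms and continuity of the structure maps --- and, by the same density-plus-approximation argument as above, they form a neighbourhood basis; here the additional continuity axiom is what makes the converse direction work, since it forces a net to converge in the total space as soon as it converges fiberwise in every seminorm and along~$\rho$ in the base. Both properties hold for the topology constructed here and for any other compatible one, whence the two coincide.
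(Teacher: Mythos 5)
Your construction is the one the paper has in mind (and the one of Fell--Doran and Williams): the tube basis $W(\varphi,U,s,\epsilon)$, the basis check via directedness of $\Sigma$ plus hypotheses (a) and (b), openness and continuity of $\rho$, identification of the subspace topology on each fiber, upper semicontinuity (or continuity) of the fiber seminorms, the special convergence axiom, continuity of the sections in $\Gamma$, and the uniqueness argument are all correct, and the uniqueness part indeed only uses the linear structure, the seminorms and the convergence axiom. The genuine problem lies exactly at the step you yourself single out, and your proposed resolution does not close it. Local boundedness of $y \mapsto s(\varphi_i(y))$ is \emph{not} ``exactly the uniformity needed'': in a locally convex (or Fr\'echet) $*$-algebra whose seminorms are not submultiplicative, $s\bigl(\varphi_1(y)\,c_2\bigr)$ is not controlled by $s(\varphi_1(y))$ times a seminorm of $c_2$; what one needs is a bilinear estimate $s(ab) \leqslant C\, t(a)\, t(b)$ holding in the fiber $\mathcal{A}_y$ with the seminorm $t \in \Sigma$ and the constant $C$ locally uniform in $y$, and neither hypothesis (a) nor (b) provides any such uniformity of the fiberwise multiplications across fibers.

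That this is not a pedantic point can be seen from a small example sitting inside the stated hypotheses (Fr\'echet case): take $X = \{0\} \cup \{1/n : n \in \mathbb{N}\}$, all fibers equal to $\mathbb{C}$ with the usual involution and the norm $|\cdot|$, but with product $u \cdot_{1/n} v = n\,uv$ on $\mathcal{A}_{1/n}$ and the ordinary product on $\mathcal{A}_0$, and let $\Gamma = \{\varphi \mid n\,|\varphi(1/n)| \to 0\}$. Then $\Gamma$ is a $*$-algebra of sections satisfying (a) and (b), but in the tube topology multiplication is discontinuous at the pair $(1,1)$ over $0$: for any tubes $W(\varphi_i,U_i,|\cdot|,\delta_i)$ around $1 \in \mathcal{A}_0$ one has $\varphi_i(1/n) \to 0$, so the elements $b_i = \varphi_i(1/n) + \delta_i/2 \in \mathcal{A}_{1/n}$ lie in these tubes while $|b_1 \cdot_{1/n} b_2| \geqslant n\,\delta_1\delta_2/16 \to \infty$, which eventually leaves every tube around $1 \cdot 1$ --- and by your own uniqueness argument no other compatible topology can do better. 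So the cross-term step genuinely requires extra input: it works verbatim when the fiber seminorms are submultiplicative (the Banach and $C^*$ cases, where your argument is exactly the classical one), and in the paper's actual application to $\mathcal{S}(E,\sigma)$ and $\mathcal{B}(E,\sigma)$ the required locally uniform bilinear estimates are supplied by Proposition~\ref{prp:SCHWEST} and the estimate~(\ref{eq:LREGR5}), whose constants depend polynomially, hence continuously, on~$\sigma$. Since the paper itself only asserts that the standard proofs ``adapt easily'', to make your write-up correct you should either add a local equicontinuity hypothesis on the fiberwise products (or submultiplicativity of the fiber seminorms) to the statement, or verify such an estimate separately in each case where the theorem is invoked.
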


\noindent
Similar statements can be found, e.g., in~\cite[Theorem II.13.18]{FD}
(for continuous bundles of Banach spaces) and in \cite[Theorem~C.25,
p.~364]{Wil} (for upper semicontinuous bundles of $C^*$-algebras),
but the proof is easily adapted to the more general situation
considered here; in particular, a basis of the desired topology
on~$\mathcal{A}$ is given by the subsets
\[
 W(\varphi,U,s,\epsilon)~
 =~\big\{ a \in \mathcal{A} \; | \; \rho(a) \in U \, , \,
          s(a - \varphi(\rho(a))) < \epsilon \big\}~,
\]
where $\, \rho: \mathcal{A} \longrightarrow X \,$ is the bundle projection,
$\varphi \in \Gamma$, $U$ is an open subset of~$X$, $s \in \Sigma$ \linebreak
and~$\, \epsilon > 0$.

Whatever may be the specific class of bundles considered, the notion of
morphism between them is the natural one:
\begin{dfn} \label{def:BUNMOR}
 Given two bundles of locally convex $*$-algebras\/ $\mathcal{A}$ and~%
 $\mathcal{B}$ over locally compact topological spaces\/ $X$ and\/~$Y$,
 respectively, a \textbf{bundle morphism} from~$\mathcal{A}$ to~%
 $\mathcal{B}$ is a continuous map $\, \phi: \mathcal{A} \longrightarrow
 \mathcal{B} \,$ which is fiber preserving in the sense that there exists a
 (necessarily unique) continuous map $\, \check{\phi}: X \longrightarrow Y \,$
 such that the following diagram commutes,
 \[
  \begin{minipage}{5cm}
   \xymatrix{
    \mathcal{A} \ar[r]^{\phi} \ar[d]_{\rho_{\mathcal{A}}} &
    \mathcal{B} \ar[d]^{\rho_{\mathcal{B}}} \\
    X \ar[r]^{\check{\phi}} & Y
   }
  \end{minipage}
 \]
 and such that for every point\/ $x$ in\/~$X$,
 the restriction $\, \phi_x^{}: \mathcal{A}_x^{}
 \longrightarrow \mathcal{B}_{\check{\phi}(x)} \,$ of~$\phi$
 to the fiber over~$x$ is a homomorphism of locally convex
 $*$-algebras.
 When $\, Y=X \,$ and $\check{\phi}$ is the identity, we say
 that $\phi$ is \textbf{strict} (over~$X$).
\end{dfn}

Theorem~\ref{thm:FELL1} above already makes it clear that an important
object associated with any upper semicontinuous bundle $\mathcal{A}$ of
locally convex $*$-algebras over~$X$ is the space $\Gamma(X,\mathcal{A})$
of all continuous sections of~$\mathcal{A}$ which, when equipped with the
usual pointwise defined operations of addition, scalar multiplication,
multiplication and involution, is easily seen to become a $*$-algebra.
Moreover, given a directed set $\Sigma$ of fiber seminorms $s$
on~$\mathcal{A}$ that generates its topology, as explained above,
we obtain a directed set of seminorms $\|.\|_{s,K}$ on~$\Gamma(X,
\mathcal{A})$ by taking the usual sup seminorms over compact
subsets $K$ of~$X$,
\begin{equation} \label{eq:SUPN1} 
 \|\varphi\|_{s,K}~=~\sup_{x \in K} s(\varphi(x))
 \qquad \mbox{for $\, \varphi \in \Gamma(X,\mathcal{A})$}~,
\end{equation}
turning $\Gamma(X,\mathcal{A})$ into a locally convex $*$-algebra
with respect to what we may continue to call the topology of uniform
convergence on compact subsets.
Over and above that, $\Gamma(X,\mathcal{A})$ carries two important
additional structures.
The first is that $\Gamma(X,\mathcal{A})$ is a \emph{module} over
the locally convex $*$-algebra $C(X)$ of continuous functions on~$X$,
as expressed by the compatibility conditions
\begin{equation} \label{eq:LCSTMOD} 
 \begin{array}{c}
  f (\varphi_1 \varphi_2)~=~(f \varphi_1) \, \varphi_2~
  =~\varphi_1 \, (f \varphi_2) \quad , \quad
  (f \varphi)^*~=~\bar{f} \varphi^* \\[2mm]
  \| f \varphi \|_{s,K}~\leqslant~\| f \|_K \| \varphi \|_{s,K} \\[1ex]
  \mbox{for $\, f \in C(X)$, $\varphi,\varphi_1,\varphi_2
                  \in \Gamma(X,\mathcal{A})$~.}
 \end{array}
\end{equation}
The second additional structure is that $\Gamma(X,\mathcal{A})$
comes equipped with a family $(\mathcal{\delta}_x)_{x \in X}$, indexed
by the points $x$ of the base space~$X$, of continuous homomorphisms
of locally convex $*$-algebras, the evaluation maps
\begin{equation} \label{eq:DIRDEL1} 
 \begin{array}{cccc}
  \delta_x : & \Gamma(X,\mathcal{A})
             & \longrightarrow & \mathcal{A}_x \\[1mm]
             & \varphi
             &   \longmapsto   &  \varphi(x)
 \end{array}~.
\end{equation}

Obviously, when $X$ is compact, we can omit the reference to compact
subsets since then $C(X)$ comes with the natural sup norm while every
fiber seminorm $s$ on~$\mathcal{A}$ will generate a seminorm $\|.\|_s$
on~$\Gamma(X,\mathcal{A})$ by taking the sup over all of~$X$; the
resulting topology is simply that of uniform convergence on all of~$X$.
On the other hand, when $X$ is locally compact but not compact, the
situation is a bit more complicated since we have to worry about the
behavior of functions and sections at infinity.
One way to deal with this issue consists in restricting to the algebras
$C_0(X)$ of continuous functions on~$X$ and $\Gamma_0(X,\mathcal{A})$
of continuous sections of~$\mathcal{A}$ that vanish at infinity
(in the usual sense that $\, f \in C(X) \,$ belongs to $C_0(X)$
and $\, \varphi \in \Gamma(X,\mathcal{A}) \,$ belongs to
$\Gamma_0(X,\mathcal{A})$ if for each $\, \epsilon > 0 \,$
and, in the second case, each $\, s \in \Sigma$, there exists
a compact subset $K$ of~$X$ such that $\, |f(x)| < \epsilon \,$
and $\, s(\varphi(x)) < \epsilon \,$ whenever $\, x \notin K$):
as in the compact case, these are locally convex $*$-algebras
with respect to the topology of uniform convergence on all of~%
$X$ and the latter is a module over the former, with the same
compatibility conditions and the same evaluation maps as before
(see equations~(\ref{eq:LCSTMOD}) and~(\ref{eq:DIRDEL1})).
Moreover, we have a condition of nondegeneracy, which is necessary
since we are now dealing with nonunital $*$-algebras: it states that
the ideal\,$^{\ref{fn:IDEAL}}$ generated by elements of the form $f \varphi$,
with $\, f \in C_0(X) \,$ and $\, \varphi \in \Gamma_0(X,\mathcal{A})$,
should be the entire algebra~$\Gamma_0(X,\mathcal{A})$.%
\footnote{Note that the condition of nondegeneracy can equally well
be formulated in the compact case but is trivially satisfied there
since the condition of vanishing at infinity is then void and so
we can identify $C_0(X)$ with $C(X)$, which has a unit, and
$\Gamma_0(X,\mathcal{A})$ with $\Gamma(X,\mathcal{A})$.}
An alternative choice would be to consider the (larger)
algebras $C_b(X)$ of bounded continuous functions on~$X$
and $\Gamma_b(X,\mathcal{A})$ of bounded continuous sections
of~$\mathcal{A}$,%
\footnote{A section of~$\mathcal{A}$ is said to be bounded if
its composition with each fiber seminorm $\, s \in \Sigma \,$
is bounded.}
again with the topology of uniform convergence on all of~$X$,
which has the advantage that $C_b(X)$ is unital.
In fact, both $C_0(X)$ and $C_b(X)$ are $C^*$-algebras, and the
latter is the multiplier algebra of the former:
\begin{equation}
 C_b(X)~=~M(C_0(X))~.
\end{equation}

All these constructions of section algebras become particularly useful
when we start out from an upper semicontinuous $C^*$-bundle $\mathcal{A}$
over~$X$.
In that case, $\Gamma_0(X,\mathcal{A})$ and $\Gamma_b(X,\mathcal{A})$
will both be $C^*$-algebras (which coincide among themselves and with
$\Gamma(X,\mathcal{A})$ when $X$ is compact), and the aforementioned
structure of $\Gamma_0(X,\mathcal{A})$ as a $C_0(X)$-module can be
reinterpreted as providing a $C^*$-algebra homomorphism $\, \Phi:
C_0(X) \longrightarrow Z(M(\Gamma_0(X,\mathcal{A})))$, where
$M(\Gamma_0(X,\mathcal{A}))$ is the multiplier algebra of~%
$\Gamma_0(X,\mathcal{A})$ and $Z(M(\Gamma_0(X,\mathcal{A})))$
its center.
Thus the section algebra $\Gamma_0(X,\mathcal{A})$ is a $C_0(X)$-algebra
in the sense of Kasparov~\cite{KP:EKKT}:

\begin{dfn} \label{def:CZERO}
 Given a locally compact topological space $X$, a $C_0(X)$-algebra
 is a $C^*$-algebra $A$ equipped with a $C^*$-algebra homomorphism
 \begin{equation} \label{eq:CZER1}
  \Phi: C_0(X) \longrightarrow Z(M(A))
 \end{equation}
 which is nondegenerate, i.e., such that the ideal generated by
 elements of the form $f a$, with $\, f \in C_0(X) \,$ and
 $\, a \in A$, is the entire algebra $A$.%
 \footnote{We shall simply write $fa$, instead of~$\Phi(f)(a)$,
 whenever convenient.}
\end{dfn}

\noindent
Note that the nondegeneracy condition imposed in Definition~\ref{def:CZERO}
above means that $\Phi$ extends uniquely to a $C^*$-algebra homomorphism
\begin{equation} \label{eq:CZER2}
 \Phi: C_b(X) \longrightarrow Z(M(A))
\end{equation}
i.e., $C_0(X)$-algebras are automatically also $C_b(X)$-algebras.
However, not every $C_b(X)$-algebras is also a $C_0(X)$-algebra,
since the nondegeneracy condition may fail: an obvious example is
provided by $C_b(X)$ itself, which is trivially a module over $C_b(X)$
itself and hence also over $C_0(X)$ but, as such, is degenerate; in
fact, in this case the ideal mentioned in Definition~\ref{def:CZERO}
above is $C_0(X)$ and not all of $C_b(X)$.
At any rate, in the context of the present paper, the extension
of the module structure from $C_0(X)$ to~$C_b(X)$ will not play
any significant role.

The notion of a $C_0(X)$-algebra homomorphism is, once again, the natural
one: it is a $*$-algebra homomorphism which is also a homomorphism of
$C_0(X)$-modules.

With these concepts at our disposal, we can now think of the process
of passing from bundles to their section algebras as a \emph{functor}.
More precisely, the version of interest here is the following: given
any locally compact topological space $X$, we have a corresponding
\emph{section algebra functor}
\begin{equation} 
 \Gamma_0(X,\cdot): \mathsf{C}_{\mathsf{us}}^*\mathsf{Bun}(X) \longrightarrow
                    \mathsf{C}_0(X)\mathsf{Alg}
\end{equation}
from the category $\mathsf{C}_{\mathsf{us}}^*\mathsf{Bun}(X)$ of upper
semicontinuous $C^*$-bundles over~$X$, whose morphisms are the strict
bundle morphisms over~$X$, to the category $\mathsf{C}_0(X)\mathsf{Alg}$
of~$C_0(X)$-algebras, whose morphisms are the $C_0(X)$-algebra homomorphisms.
Indeed, it is clear that given any strict bundle morphism $\, \phi: \mathcal{A}
\longrightarrow \mathcal{B} \,$ between upper semicontinuous $C^*$-bundles
$\mathcal{A}$ and~$\mathcal{B}$ over~$X$, pushing forward sections with~$\phi$
provides a corresponding $C_0(X)$-algebra homomorphism $\, \Gamma_0(X,\phi):
\Gamma_0(X,\mathcal{A}) \longrightarrow \Gamma_0(X,\mathcal{B})$.

Conversely, we can construct a \emph{sectional representation functor}
\begin{equation}
 \mathrm{SR}(X,\cdot): \mathsf{C}_0(X)\mathsf{Alg} \longrightarrow
                       \mathsf{C}_{\mathsf{us}}^*\mathsf{Bun}(X)
\end{equation}
as follows. First, given any $C_0(X)$-algebra~$A$, we define
$\mathrm{SR}(X,A)$, as a ``bundle'' over~$X$ (in the purely
set-theoretical sense), by writing
\begin{equation} \label{eq:CSTFIB1}
 \mathrm{SR}(X,A)~=~\bigcup_{x \in X}^{\scriptscriptstyle{\bullet}} \mathrm{SR}(X,A)_x
\end{equation}
where the fiber $\mathrm{SR}(X,A)_x$ over any point $x$ in~$X$ is defined by
\begin{equation} \label{eq:CSTFIB2}
 \mathrm{SR}(X,A)_x~=~A \: / \: \overline{\Phi(I_x) A}
 \quad \mbox{where} \quad I_x~=~\{ f \in C_0(X) \: | \: f(x)=0 \}~.
\end{equation}
The structure of~$\mathrm{SR}(X,A)$ as a $C^*$-bundle is then determined
by the construction \mbox{described} in Theorem~\ref{thm:FELL1} above,
specialized to the case of $C^*$-bundles and with
\begin{equation}
 \Gamma~=~\bigl\{ \varphi_a \, | \, a \in A \bigr\}
 \quad \mbox{where} \quad
 \varphi_a(x)  =  a \, + \, \overline{\Phi(I_x) A}~
              \in A \: / \: \overline{\Phi(I_x) A}~,
\end{equation}
since this space $\Gamma$ satisfies the two conditions of~Theorem~%
\ref{thm:FELL1} (condition~(b) is obvious and condition~(a) is shown
in~\cite[Proposition~C.10, p.~357]{Wil}).
Second, given any homo\-morphism $\, \phi_X: A \longrightarrow B \,$
between $C_0(X)$-algebras $A$ and~$B$, passing to quotients provides
a corresponding strict bundle morphism $\, \mathrm{SR}(X,\phi_X):
\mathrm{SR}(X,A) \longrightarrow \mathrm{SR}(X,B)$.

The construction outlined in the previous paragraph is actually the central
point in the proof of a famous theorem in the field, generally known as the
sectional representation theorem, which asserts that every $C_0(X)$-algebra
$A$ can be obtained as the section algebra $\Gamma_0(X,\mathcal{A})$ of an
appropriate upper semicontinuous $C^*$-bundle $\mathcal{A}$ over~$X$; for
an explicit statement with a complete proof, the reader is referred to~%
\cite[Theorem~C.26, p.~367]{Wil}.
Here, we state a strengthened version of this theorem, which extends it
to an equivalence of categories~\cite[p.~18]{ML}:
\begin{thm}[Sectional Representation Theorem] \label{thm:SecRep}
 Given a locally compact topological space\/~$X$, the functors
 $\Gamma_0(X,\cdot)$ and $\mathrm{SR}(X,\cdot)$ establish an
 equivalence between the categories $\mathsf{C}_{\mathsf{us}}^*%
 \mathsf{Bun}(X)$ and $\mathsf{C}_0(X)\mathsf{Alg}$.
\end{thm}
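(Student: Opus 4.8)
The plan is to establish the categorical equivalence by verifying the two natural isomorphisms $\Gamma_0(X,\mathrm{SR}(X,\cdot)) \cong \mathrm{id}$ and $\mathrm{SR}(X,\Gamma_0(X,\cdot)) \cong \mathrm{id}$, building on the non-categorical version of the sectional representation theorem~\cite[Theorem~C.26, p.~367]{Wil}, which already provides, for each $C_0(X)$-algebra $A$, a natural $C_0(X)$-algebra isomorphism $A \cong \Gamma_0(X,\mathrm{SR}(X,A))$ sending $a$ to the section $\varphi_a$. The real work is to check functoriality of both constructions (so that they are genuine functors between the stated categories), to verify that the family of isomorphisms $a \mapsto \varphi_a$ is natural in $A$, and to produce the reverse natural isomorphism at the level of bundles.

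First I would confirm that $\mathrm{SR}(X,\cdot)$ and $\Gamma_0(X,\cdot)$ are well-defined on morphisms. For $\Gamma_0(X,\cdot)$ this is essentially immediate: a strict bundle morphism $\phi\colon\mathcal{A}\to\mathcal{B}$ over $X$ pushes sections forward, $\Gamma_0(X,\phi)(\varphi) = \phi\circ\varphi$, and one checks this lands in $\Gamma_0(X,\mathcal{B})$ (using that $\phi$ is fiberwise norm-nonincreasing as a $C^*$-homomorphism, hence preserves vanishing at infinity) and is a $C_0(X)$-algebra homomorphism; composition and identities are respected by inspection. For $\mathrm{SR}(X,\cdot)$, given a $C_0(X)$-algebra homomorphism $\phi_X\colon A\to B$, one notes that $\phi_X(\overline{\Phi_A(I_x)A}) \subseteq \overline{\Phi_B(I_x)B}$ (because $\phi_X$ intertwines the two $C_0(X)$-module structures), so $\phi_X$ descends to a $C^*$-homomorphism $A/\overline{\Phi_A(I_x)A} \to B/\overline{\Phi_B(I_x)B}$ on each fiber; assembling these gives a fiberwise map $\mathrm{SR}(X,\phi_X)$, and one must verify it is continuous for the bundle topologies of Theorem~\ref{thm:FELL1}. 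Continuity follows because $\mathrm{SR}(X,\phi_X)$ carries the generating section $\varphi_a^A$ to $\varphi_{\phi_X(a)}^B$, so it maps the basic open sets $W(\varphi_a^A,U,\|\cdot\|,\epsilon)$ into $W(\varphi_{\phi_X(a)}^B,U,\|\cdot\|,\epsilon)$; functoriality is then clear from the fiberwise description.

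Next I would assemble the two natural isomorphisms. For the composite $\Gamma_0(X,\mathrm{SR}(X,\cdot))$, the map $\eta_A\colon A \to \Gamma_0(X,\mathrm{SR}(X,A))$, $a\mapsto\varphi_a$, is an isomorphism by~\cite[Theorem~C.26]{Wil}; naturality in $A$ amounts to the identity $\Gamma_0(X,\mathrm{SR}(X,\phi_X))\circ\eta_A = \eta_B\circ\phi_X$, which holds because both sides send $a$ to $\varphi_{\phi_X(a)}^B$. For the composite $\mathrm{SR}(X,\Gamma_0(X,\cdot))$, given an upper semicontinuous $C^*$-bundle $\mathcal{A}$ over $X$ with section algebra $A := \Gamma_0(X,\mathcal{A})$, I would define $\theta_{\mathcal{A}}\colon\mathrm{SR}(X,A)\to\mathcal{A}$ fiberwise by sending the class of $\varphi\in A$ in $A/\overline{\Phi(I_x)A}$ to $\varphi(x)\in\mathcal{A}_x$; this is well-defined and injective because the kernel of the evaluation map $\delta_x\colon A\to\mathcal{A}_x$ is exactly $\overline{\Phi(I_x)A}$ (a standard fact underlying the sectional representation theorem, again available in~\cite{Wil}), surjective because $\Gamma_0(X,\mathcal{A})$ has enough sections to reach every point of every fiber (a consequence of the structure of $\mathcal{A}$ as an upper semicontinuous $C^*$-bundle, cf.\ Theorem~\ref{thm:FELL1}), and continuous because it matches up the generating sections $\varphi_{\varphi}$ of $\mathrm{SR}(X,A)$ with the sections $\varphi$ of $\mathcal{A}$, hence basic open sets with basic open sets; being a continuous bijection that is fiberwise a $C^*$-isomorphism, it is a bundle isomorphism (its inverse is continuous by the same basic-open-set argument). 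Naturality of $\theta_{\mathcal{A}}$ in $\mathcal{A}$ is checked fiberwise.

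The main obstacle, and the point that deserves the most care, is the identification $\ker\delta_x = \overline{\Phi(I_x)A}$ together with the surjectivity of $\theta_{\mathcal{A}}$ onto each fiber: this is where one genuinely uses that $\mathcal{A}$ is an (upper semicontinuous) $C^*$-bundle and not merely a bundle of locally convex $*$-algebras, invoking the existence of sufficiently many continuous sections and the fact that $\Phi(I_x)A$ already has dense image in $\ker\delta_x$. Everything else — functoriality, naturality, continuity of the comparison maps — is a matter of tracking the generating sections $\varphi_a$ and $\varphi_\varphi$ through the definitions, which I would spell out but which involves no surprises. I would close by remarking that since both constructions are defined in the expected way on morphisms and the unit and counit just constructed are natural isomorphisms, the two functors form an adjoint equivalence, which is the assertion of the theorem.
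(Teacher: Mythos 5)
Your proposal is correct and follows essentially the same route as the paper: cite the classical sectional representation theorem for the natural isomorphism $A \cong \Gamma_0(X,\mathrm{SR}(X,A))$, and build the strict bundle isomorphism $\mathrm{SR}(X,\Gamma_0(X,\mathcal{A})) \cong \mathcal{A}$ fiberwise from the identification $\overline{\Phi(I_x)\,\Gamma_0(X,\mathcal{A})} = \ker\delta_x$ together with fullness of the section algebra. The only difference is one of emphasis: you defer that kernel identification to the literature and spell out the functoriality and naturality bookkeeping, whereas the paper proves the identification directly (an Urysohn-type argument using upper semicontinuity of the fiber norm and vanishing at infinity) and treats the categorical verifications as routine.
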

\begin{proof}
 Explicitly, the statement of the theorem means that, for any upper
 semicontinuous $C^*$-bundle~$\mathcal{A}$ over~$X$, there is a
 strict bundle isomorphism $\, \mathcal{A} \cong \mathrm{SR}
 \bigl( X , \Gamma_0(X,\mathcal{A}) \bigr) \,$ which behaves
 naturally under strict bundle morphisms, and similarly that, for
 any $C_0(X)$-algebra~$A$, there is a $C_0(X)$-algebra isomorphism
 $\, A \cong \Gamma_0 \bigl( X , \mathrm{SR}(X,A) \bigr) \,$ which
 behaves naturally under $C_0(X)$-algebra homomorphisms.
 The existence of the second of these isomorphisms is precisely
 the content of the traditional formulation of the sectional
 representation theorem~\cite[Theorem~C.26, p.~367]{Wil},
 whereas the first is constructed similarly.
 Namely, given any upper semicontinuous $C^*$-bundle~$\mathcal{A}$
 over~$X$, note that, for any point $x$ in~$X$, we have
 \[
  \overline{\Phi(I_x) \, \Gamma_0(X,\mathcal{A})}~
  =~\bigl\{ \varphi \in \Gamma_0(X,\mathcal{A}) \, | \,
            \varphi(x) = 0 \bigr\}
 \]
 since the inclusion $\subset$ is trivial and the inclusion $\supset$
 follows from a standard argument: given $\, \varphi \in \Gamma_0%
 (X,\mathcal{A}) \,$ and any $\, \epsilon > 0$, there are an open
 neighborhood $U$ of~$x$ with compact closure $\bar{U}$ and a
 compact subset $K$ containing it such that the function
 $\, x \longmapsto \| \varphi(x) \|_x$ \linebreak is
 $< \epsilon$ in~$U$ (since it vanishes at~$x$ and the $C^*$
 fiber norm on~$\mathcal{A}$ is upper semicontinuous) as well
 as outside of~$K$ (since $\varphi$ vanishes at infinity), so
 applying Urysohn's lemma we can find a function $\, f \in
 C_c(X) \,$ with $\, 0 \leqslant f \leqslant 1 \,$ which is
 $\equiv 0$ outside of~$U$ but satisfies $\, f(x) = 1 \,$
 and combine it with another function $\, g \in C_0(X) \,$
 with $\, 0 \leqslant f \leqslant 1 \,$ which is $\equiv 1$
 on~$K$ to get a function $\, (1-f) g \in C_0(X) \,$ which
 is $\equiv 1$ on $K \setminus U$ but vanishes at~$x$ and
 from that deduce that the sup norm of $\, \varphi - (1-f)
 g \varphi \,$ is $< \epsilon$. Therefore, for any point~$x$
 in~$X$, we get a $C^*$-algebra isomorphism
 \[
  \mathrm{SR} \bigl( X, \Gamma_0(X,\mathcal{A}) \bigr)_x
  \cong \mathcal{A}_x
 \]
 which provides the desired bundle isomorphism as $x$ varies
 over the base space~$X$.
\end{proof}

An interesting question in this context would be to fully incorporate
the notions of pull-back and of change of base ring into this picture.
On the one hand, given any proper continuous map
$\, f: X \longrightarrow Y \,$ between locally
compact topological spaces $X$ and~$Y$, we can
define a corresponding \emph{pull-back functor}
\begin{equation}
 f^*: \mathsf{C}_{\mathsf{us}}^*\mathsf{Bun}(Y) \longrightarrow
      \mathsf{C}_{\mathsf{us}}^*\mathsf{Bun}(X)~,
\end{equation}
associating to each upper semicontinuous $C^*$-bundle $\mathcal{B}$
over~$Y$ its pull-back via~$f$, which is an upper semicontinuous $C^*$-%
bundle~$f^* \mathcal{B}$ over~$X$, fiberwise defined by $\, (f^* \mathcal{B})_x
= \mathcal{B}_{f(x)}$, and associating to each strict bundle morphism $\, \phi:
\mathcal{B} \longrightarrow \mathcal{B}' \,$ over~$Y$ its pull-back via~$f$,
which is a strict bundle morphism $\, f^* \phi: f^* \mathcal{B}
\longrightarrow f^* \mathcal{B}' \,$ over~$X$, fiberwise defined by
$\, f^* \phi \big|_{(f^* \mathcal{B})_x} = \phi \big|_{\mathcal{B}_{f(x)}}$.
On the other hand, given any proper continuous map
$\, f: X \longrightarrow Y$ \linebreak between locally compact
topological spaces $X$ and~$Y$, we can define a corresponding
\emph{change of base ring functor}
\begin{equation}
 f_\sharp: \mathsf{C}_0(X)\mathsf{Alg} \longrightarrow
          \mathsf{C}_0(Y)\mathsf{Alg}~,
\end{equation}
associating to each $C_0(X)$-algebra~$A$ a $C_0(Y)$-algebra $f_\sharp A$
which as a $C^*$-algebra is equal to~$A$ but with a modified module
structure, defining multiplication with functions in~$C_0(Y)$ to be
given by multiplication with the corresponding functions in~$C_0(X)$
obtained by pull-back via $f$, and associating to each $C_0(X)$-algebra
homomorphism $\, \phi_X: A \longrightarrow A'$ \linebreak a $C_0(Y)$-algebra
homomorphism $\, f_\sharp \phi_X: f_\sharp A \longrightarrow f_\sharp A' \,$
which as a $C^*$-algebra homo\-morphism is equal to $\phi_X$ but is now
linear with respect to the modified module structure.
It should be noted that these two functors do \emph{not} translate
into each other under the equivalence established by the sectional
representation theorem because, roughly speaking, they go in opposite
directions and the first preserves the fibers while changing the section
algebras whereas the second preserves the section algebras while changing
the fibers. \linebreak
Indeed, for any upper semicontinuous $C^*$-bundle $\mathcal{B}$ over~$Y$,
composition of sections with $f$ induces a $C^*$-algebra homomorphism
\begin{equation}
 f^*: \Gamma_0(Y,\mathcal{B})~\longrightarrow~\Gamma_0(X,f^* \mathcal{B})
\end{equation}
which, in general, is far from being an isomorphism since it may have
a nontrivial kernel (consisting of sections of~$\mathcal{B}$ over~$Y$
that vanish on the image of~$f$) as well as a non\-trivial image (consisting
of sections of~$f^* \mathcal{B}$ over~$X$ that are constant along the level
sets of~$f$).
Similarly, given any $C_0(X)$-algebra~$A$ and using~$f$ to also consider it
as a $C_0(Y)$-algebra~$f_\sharp A$, we can apply the respective sectional
representation functors to introduce the corresponding $C^*$-bundles
$\, \mathcal{A} = \mathrm{SR}(X,A) \,$ over~$X$ and $\, f_\sharp \mathcal{A}
= \mathrm{SR}(Y,f_\sharp A) \,$ over~$Y$, so that $\, A \cong \Gamma_0%
(X,\mathcal{A})\,$ and $\, f_\sharp A \cong \Gamma_0(Y,f_\sharp \mathcal{A})$:
then we find that the fibers of $f_\sharp \mathcal{A}$ are related to the
fibers of $\mathcal{A}$ by
\begin{equation} \label{eq:fcb}
 (f_\sharp \mathcal{A})_y~\cong~\Gamma(f^{-1}(y),\mathcal{A})~.
\end{equation}

We are now ready to address the central point of this section, namely,
the construction of the $C^*$-completion at the level of bundles and
its relation with the $C^*$-completion at the level of the corresponding
section algebras.
To this end, suppose that $X$ is a locally compact topological space and
$\mathcal{A}$ is a bundle of locally convex $*$-algebras over~$X$, with
bundle projection $\, \rho: \mathcal{A} \longrightarrow X \,$ and with
respect to some directed set~$\Sigma$ of fiber seminorms on~$\mathcal{A}$,
as in Definition~\ref{def:BUNALG} above.
Suppose furthermore that we are given a function \mbox{$\|.\|: \mathcal{A}
\longrightarrow \mathbb{R}$}\, which is a $C^*$ fiber norm (in the sense
of inducing a $C^*$-norm on each fiber of~$\mathcal{A}$).
From these data, we can construct a ``fiberwise $C^*$-completion''
$\bar{\mathcal{A}}$ of~$\mathcal{A}$ by taking, for every point $x$
of~$X$, the completion $\bar{\mathcal{A}}_x$ of~$\mathcal{A}_x$
with respect to the $C^*$-norm $\|.\|_x$ to obtain a family
$(\bar{\mathcal{A}}_x)_{x \in X}$ of $C^*$-algebras and consider
the disjoint union
\begin{equation} \label{eq:CSTFIB3}
 \bar{\mathcal{A}}~=
 \bigcup_{x \in X}^{\scriptscriptstyle{\bullet}} \bar{\mathcal{A}}_x
\end{equation}
as a ``bundle'' of $C^*$-algebras over~$X$ (in the purely set-theoretical
sense); obviously, $\mathcal{A} \subset \bar{\mathcal{A}}$ and the original
bundle projection $\, \rho: \mathcal{A} \longrightarrow X \,$ is simply the
restriction of the bundle projection $\, \bar{\rho}: \bar{\mathcal{A}}
\longrightarrow X$.
In order to control the topological aspects involved in this
construction, we have to impose additional hypotheses.
Namely, we shall assume that $\mathcal{A}$ is an upper semicontinuous
bundle of locally convex $*$-algebras over~$X$, as in Definition~%
\ref{def:BUNALG} above, and that $\|.\|$ is \emph{locally bounded}
by~$\Sigma$, i.e., for every point $x$ of~$X$ there exist a
neighborhood $U_x$ of~$x$, a fiber seminorm $s$ belonging
to~$\Sigma$ and a constant $\, C > 0 \,$ such that $\, \|a\|
\leqslant C \, s(a)$ \linebreak for $\, a \in \rho^{-1}(U_x)$.
Our goal will be to show that, under these circumstances, $\bar{\mathcal{A}}$
admits a unique topology turning it into an upper semicontinuous $C^*$-bundle
over~$X$ such that the space of its continuous sections vanishing at infinity
is the completion of the space of continuous sections of compact support of~%
$\mathcal{A}$ with respect to the sup norm induced by the $C^*$ fiber norm~%
$\|.\|$: this will provide us with a natural and concrete example of the
abstract sectional representation theorem (Theorem~\ref{thm:SecRep}).

To do so, note first that since the fiber norm $\|.\|$ is locally bounded by
the seminorms in~$\Sigma$ which are upper semicontinuous, and since $X$ is
locally compact, it follows that, given any continuous section $\varphi$
of~$\mathcal{A}$, the function $\, x \longmapsto \| \varphi(x) \|_x \,$
on~$X$ is locally bounded
and hence bounded on compact subsets of~$X$, so for each compactly
supported continuous section $\varphi$ of~$\mathcal{A}$,
$\|\varphi\|_\infty = \sup_{x \in X} \|\varphi(x)\|_x \,$ exists.
It is then clear that $\|.\|_\infty$ defines a $C^*$-norm on
$\Gamma_c(X,\mathcal{A})$: let $\overline{\Gamma_c(X,\mathcal{A})}$
be the corresponding $C^*$-completion.
Next, note that $\Gamma_c(X,\mathcal{A})$ is also a module over
$C_0(X)$, and hence so is $\overline{\Gamma_c(X,\mathcal{A})}$
(since multiplication is obviously a continuous bilinear map with
respect to the pertinent $C^*$-norms); moreover, we have the equality
$\, C_0(X) \, \Gamma_c(X,\mathcal{A}) = \Gamma_c(X,\mathcal{A})$, since
any $\, \varphi \in \Gamma_c(X,\mathcal{A}) \,$ can be written in the
form $f \varphi$ for some $\, f \in C_0(X)$ (it suffices to choose $f$
to be equal to~$1$ on the support of~$\varphi$, using Urysohn's lemma),
so $\overline{\Gamma_c(X,\mathcal{A})}$ is in fact a $C_0(X)$-algebra.
Therefore, by the construction of the sectional representation functor,
we have
\[
 \bar{\mathcal{A}}~=~\mathrm{SR}(X,\overline{\Gamma_c(X,\mathcal{A})})~.
\]
In particular, $\bar{\mathcal{A}}$ admits a unique topology turning
it into an upper semicontinuous $C^*$-bundle over~$X$ such that
continuous sections of compact support of~$\mathcal{A}$ become
continuous sections of compact support of~$\bar{\mathcal{A}}$,
since the space $\Gamma_c(X,\mathcal{A})$ satisfies the two
conditions of~Theorem~\ref{thm:FELL1} (condition~(b) is obvious
and condition~(a) is stated in~\cite[Proposition~C.10, p.~357]{Wil}).
In~fact, it follows from the construction of the topology on~%
$\bar{\mathcal{A}}$ in the proof of Theorem~\ref{thm:FELL1} that the
inclusion $\, \mathcal{A} \subset \bar{\mathcal{A}} \,$ is continuous,
since if $U$ is a sufficiently small open subset of~$X$ such that
$\, \|a\| \leqslant C \, s(a) \,$ for $\, a \in \rho^{-1}(U) \,$
with some fiber seminorm $\, s \in \Sigma \,$ and some constant
$\, C>0$, we have, for any $\varphi \in \Gamma_c(X,\mathcal{A})$,
\[
 W_{\mathcal{A}}(\varphi,U,s,\epsilon/C)~
 \subset~W_{\bar{\mathcal{A}}}(\varphi,U,\|.\|,\epsilon) \cap \mathcal{A}~,
\]
and $W_{\mathcal{A}}(\varphi,U,s,\epsilon/C)$ is open in~$\mathcal{A}$
since $\varphi$ is continuous and $s$ is upper semicontinuous; this
fact also implies that the original $C^*$ fiber norm $\|.\|$ on~%
$\mathcal{A}$, just like its extension to the $C^*$ fiber norm
(also denoted by~$\|.\|$) on~$\bar{\mathcal{A}}$, is automatically
upper semicontinuous; moreover, $\mathcal{A}$ is dense in~$\bar%
{\mathcal{A}}$ (simply because, by construction, every fiber
$\mathcal{A}_x$ of~$\mathcal{A}$ is dense in the corresponding
fiber $\bar{\mathcal{A}}_x$ of~$\bar{\mathcal{A}}$).
All of this justifies calling $\bar{\mathcal{A}}$ the \emph{fiberwise
$C^*$-completion} of~$\mathcal{A}$ with respect to the given $C^*$
fiber norm.
And finally, it is clear that, by construction, the section algebra
$\Gamma_0(X,\bar{\mathcal{A}})$ is the $C^*$-completion of the section
algebra~$\Gamma_c(X,\mathcal{A})$ with respect to the sup norm
$\|.\|_\infty$:
\begin{thm}[Bundle Completion Theorem] \label{thm:BunCom}
 Given a locally compact topological space\/~$X$, let\/ $\mathcal{A}$ be
 an upper semicontinuous bundle of locally convex $*$-algebras over\/~$X$,
 with respect to some directed set\/~$\Sigma$ of fiber seminorms, let\/
 $\|.\|$ be a\/ $C^*$ fiber norm on\/~$\mathcal{A}$ which is locally
 bounded with respect to\/~$\Sigma$ and let\/ $\bar{\mathcal{A}}$ be
 the corresponding fiberwise\/ $C^*$-completion of\/~$\mathcal{A}$.
 Then there is a unique topology  on\/~$\bar{\mathcal{A}}$ turning it
 into an upper semicontinuous\/ $C^*$-bundle over\/~$X$ such that the
 $C^*$-completion of the section algebra\/ $\Gamma_c(X,\mathcal{A})$
 with respect to the sup norm\/ $\|.\|_\infty$ is the section algebra\/
 $\Gamma_0(X,\bar{\mathcal{A})}$:
 \begin{equation} \label{eq:BunCom}
  \overline{\Gamma_c(X,\mathcal{A})}~=~\Gamma_0(X,\bar{\mathcal{A}})~.
 \end{equation}
\end{thm}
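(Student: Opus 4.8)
The plan is to observe that essentially all of the analytic work has already been carried out in the discussion preceding the statement, so the proof reduces to an orderly assembly of those facts together with a single invocation of the Sectional Representation Theorem (Theorem~\ref{thm:SecRep}). First I would record that the hypotheses --- local boundedness of $\|.\|$ by~$\Sigma$, upper semicontinuity of the seminorms in~$\Sigma$, and local compactness of~$X$ --- force the function $\, x \longmapsto \|\varphi(x)\|_x \,$ to be locally bounded for every continuous section~$\varphi$ of~$\mathcal{A}$, hence bounded on compact subsets, so that $\, \|\varphi\|_\infty = \sup_{x \in X} \|\varphi(x)\|_x \,$ is finite for $\, \varphi \in \Gamma_c(X,\mathcal{A})$. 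Since $\|.\|_\infty$ restricts on each fiber to the given $C^*$-norm $\|.\|_x$, it is in fact a $C^*$-norm (not merely a $C^*$-seminorm) on~$\Gamma_c(X,\mathcal{A})$, and one may pass to the $C^*$-completion $\overline{\Gamma_c(X,\mathcal{A})}$.

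Next I would show that $\overline{\Gamma_c(X,\mathcal{A})}$ is a $C_0(X)$-algebra in the sense of Definition~\ref{def:CZERO}. The pointwise $C_0(X)$-module structure on~$\Gamma_c(X,\mathcal{A})$ satisfies $\, \|f\varphi\|_\infty \leqslant \|f\|_\infty \, \|\varphi\|_\infty$, so it extends continuously to the completion and yields a $*$-homomorphism $\, \Phi: C_0(X) \longrightarrow Z(M(\overline{\Gamma_c(X,\mathcal{A})}))$; nondegeneracy follows from the identity $\, C_0(X)\,\Gamma_c(X,\mathcal{A}) = \Gamma_c(X,\mathcal{A}) \,$, since for any $\varphi$ one can choose, by Urysohn's lemma, an $\, f \in C_0(X) \,$ that is $\equiv 1$ on $\mathrm{supp}\,\varphi$, so that $\varphi = f\varphi$. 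Hence $\overline{\Gamma_c(X,\mathcal{A})}$ is a $C_0(X)$-algebra, and by the very construction of the sectional representation functor it is (up to the canonical identification of a $C_0(X)$-algebra with the section algebra of its associated bundle) equal to $\Gamma_0\bigl(X, \mathrm{SR}(X,\overline{\Gamma_c(X,\mathcal{A})})\bigr)$.

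The heart of the argument is then to identify the set-theoretic ``bundle'' $\bar{\mathcal{A}}$ of~(\ref{eq:CSTFIB3}) with $\, \mathrm{SR}\bigl(X, \overline{\Gamma_c(X,\mathcal{A})}\bigr) \,$ fiberwise. For fixed $\, x \in X$, the evaluation $\, \delta_x: \Gamma_c(X,\mathcal{A}) \longrightarrow \mathcal{A}_x \,$ is contractive from $\|.\|_\infty$ to~$\|.\|_x$, hence extends to a $C^*$-algebra homomorphism $\, \overline{\Gamma_c(X,\mathcal{A})} \longrightarrow \bar{\mathcal{A}}_x$; its image is closed and, since condition~(b) of Theorem~\ref{thm:FELL1} holds for $\Gamma_c(X,\mathcal{A})$ and $\mathcal{A}_x$ is dense in~$\bar{\mathcal{A}}_x$, also dense, hence surjective, while its kernel equals $\, \overline{\Phi(I_x)\,\overline{\Gamma_c(X,\mathcal{A})}} \,$ by the same localization argument already used in the proof of Theorem~\ref{thm:SecRep} (and recorded in~\cite[Proposition~C.10, p.~357]{Wil}): the inclusion $\supseteq$ is immediate, and for $\subseteq$ one approximates a section vanishing at~$x$ by $I_x$-multiples of itself by means of a cut-off equal to~$1$ near~$x$. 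Thus $\, \mathrm{SR}\bigl(X,\overline{\Gamma_c(X,\mathcal{A})}\bigr)_x \cong \bar{\mathcal{A}}_x \,$ for every~$x$, compatibly in~$x$, so $\, \bar{\mathcal{A}} \cong \mathrm{SR}\bigl(X,\overline{\Gamma_c(X,\mathcal{A})}\bigr) \,$ as set-theoretic bundles; transporting the $C^*$-bundle topology of the right-hand side makes $\bar{\mathcal{A}}$ an upper semicontinuous $C^*$-bundle, and this is the unique topology for which the compactly supported continuous sections of~$\mathcal{A}$ remain continuous sections of~$\bar{\mathcal{A}}$, by Theorem~\ref{thm:FELL1} applied with $\, \Gamma = \Gamma_c(X,\mathcal{A}) \,$ (whose condition~(a) is the upper semicontinuity of $\, x \mapsto \|\varphi(x)\|_x$, guaranteed by local boundedness together with~\cite[Proposition~C.10, p.~357]{Wil}). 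Applying $\Gamma_0(X,\cdot)$ and invoking Theorem~\ref{thm:SecRep} finally gives $\, \Gamma_0(X,\bar{\mathcal{A}}) \cong \Gamma_0\bigl(X, \mathrm{SR}(X,\overline{\Gamma_c(X,\mathcal{A})})\bigr) \cong \overline{\Gamma_c(X,\mathcal{A})}$, which is~(\ref{eq:BunCom}).

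I expect the main obstacle to be the fiberwise identification in the third paragraph --- concretely, verifying that forming $\mathrm{SR}$ of the \emph{completed} section algebra reproduces exactly the fiberwise $C^*$-completions $\bar{\mathcal{A}}_x$ and nothing smaller or larger. This hinges on surjectivity of the extended evaluation maps, which uses that $\mathcal{A}$ carries enough compactly supported continuous sections (condition~(b) of Theorem~\ref{thm:FELL1}), and on the computation of their kernels as $\overline{\Phi(I_x)\,\overline{\Gamma_c(X,\mathcal{A})}}$, which is the one genuinely non-formal step and is precisely the localization lemma underlying the classical sectional representation theorem. Everything else --- finiteness of $\|.\|_\infty$, the module and nondegeneracy properties, upper semicontinuity of the fiber norm, and uniqueness of the bundle topology --- is routine given the results already established.
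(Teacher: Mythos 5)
Your proposal is correct and follows essentially the same route as the paper, whose argument is exactly the discussion preceding the theorem: finiteness of $\|.\|_\infty$ from local boundedness plus upper semicontinuity, the $C_0(X)$-algebra structure of $\overline{\Gamma_c(X,\mathcal{A})}$ via Urysohn, the identification $\bar{\mathcal{A}} = \mathrm{SR}\bigl(X,\overline{\Gamma_c(X,\mathcal{A})}\bigr)$, and uniqueness of the bundle topology from Theorem~\ref{thm:FELL1} together with Theorem~\ref{thm:SecRep}. The only difference is that you spell out the fiberwise identification (surjectivity of the extended evaluation maps and the computation of their kernels as $\overline{\Phi(I_x)\,\overline{\Gamma_c(X,\mathcal{A})}}$), which the paper leaves implicit in the phrase ``by the construction of the sectional representation functor''; this is a faithful unfolding of the same localization argument used in the paper's proof of Theorem~\ref{thm:SecRep}.
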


Regarding universal properties of such $C^*$-completions at the level of
bundles and of their section algebras, it is now easy to see that these
depend essentially on whether the corresponding universal properties hold
fiberwise, at the level of algebras, provided we take into account that
when dealing with the section algebras, we must work in the category of
$C_0(X)$-algebras rather than just $C^*$-algebras.
More specifically, under the same hypotheses as before (namely that
$\mathcal{A}$ is is an upper semicontinuous bundle of locally convex
$*$-algebras over~$X$ and $\|.\|$ is a locally bounded $C^*$ fiber
norm on~$\mathcal{A}$), we can guarantee that:
\begin{itemize}
 \item Universality implies universality. If, for every point $x$
       of~$X$, $\|.\|_x$ is the maximal $C^*$-norm on~$\mathcal{A}_x$,
       then $\|.\|$ is the maximal $C^*$ fiber norm on~$\mathcal{A}$
       and we can refer to $\bar{\mathcal{A}}$ as the \emph{minimal
       $C^*$-completion} or \emph{universal enveloping $C^*$-bundle}
       of~$\mathcal{A}$. Moreover, $\Gamma_0(X,\bar{\mathcal{A}})$ will
       under these circumstances be the \emph{universal enveloping
       $C_0(X)$-algebra} of~$\Gamma_c(X,\mathcal{A})$.
 \item Uniqueness implies uniqueness. If, for every point $x$ of~$X$,
       $\mathcal{A}_x$ admits a unique $C^*$-norm and hence a unique
       $C^*$-algebra completion, then $\mathcal{A}$ admits a unique
       $C^*$ fiber norm and hence a unique $C^*$-bundle completion.
       Moreover, $\Gamma_0(X,\bar{\mathcal{A}})$ will under these
       circumstances be the unique $C_0(X)$-completion of~%
       $\Gamma_c(X,\mathcal{A})$.
\end{itemize}

\section{The DFR-Algebra for Poisson Vector Bundles}
\label{sec:DFRPois}

Let $(E,\sigma)$ be a Poisson vector bundle with base manifold~$X$, i.e., $E$
is a (smooth) real vector bundle of fiber dimension~$n$, say, over a (smooth)
manifold~$X$, with typical fiber~$\mathbb{E}$, equipped with a fixed (smooth)
bivector field $\sigma$; in other words, the dual $E^*$ of $E$ is a (smooth)
presymplectic vector bundle.%
\footnote{Note that we do \emph{not} require $\sigma$ to be nondegenerate
or even to have constant rank.}
Then it is clear that we can apply all the constructions of Section~3
to each fiber.
The question to be addressed in this section is how, using the methods
outlined in Section~4, the results can be glued together along the base
manifold~$X$ and to describe the resulting global objects.

Starting with the collection of Heisenberg algebras $\mathfrak{h}_{\sigma(x)}$
($x \in X$), we note first of all that these fit together into a (smooth) real
vector bundle over~$X$, which is just the direct sum of~$E^*$ and the trivial
line bundle $\, X \times \mathbb{R} \,$ over~$X$.
The nontrivial part is the commutator, which is defined by equation~%
(\ref{eq:HACOMM}) applied to each fiber, turning this vector bundle into
a \emph{totally intransitive Lie algebroid} \cite[Def.~3.3.1, p.~100]{MK}
which we shall call the \emph{Heisenberg algebroid} associated to~%
$(E,\sigma)$ and denote by $\mathfrak{h}(E,\sigma)$: it will even be
a \emph{Lie algebra bundle} \cite[Def.~3.3.8, p.~104]{MK} if and only
if $\sigma$ has constant rank. \linebreak
Of course, spaces of sections (with certain regularity properties)
of~$\mathfrak{h}(E,\sigma)$ will then form (infinite-dimensional)
Lie algebras with respect to the (pointwise defined) commutator,
but the correct choice of regularity conditions is a question of
functional analytic nature to be dictated by the problem at hand.

Similarly, considering the collection of Heisenberg groups~$H_{\sigma(x)}$
($x \in X$), we note that these fit together into a (smooth) real fiber
bundle over~$X$, which is just the fiber product of~$E^*$ and the trivial
line bundle $X \times \mathbb{R}$.
Again, the nontrivial part is the product, which is defined by equation~%
(\ref{eq:HGPROD}) applied to each fiber, turning this fiber bundle into
a \emph{totally intransitive Lie groupoid} \cite[Def.~1.1.3, p.~5 \&
Def.~1.5.9, p.~32]{MK} which we shall call the \emph{Heisenberg groupoid}
associated to~$(E,\sigma)$ and denote by $H(E,\sigma)$: it will even be a
\emph{Lie group bundle} \cite[Def.~1.1.19, p.~11]{MK} if and only if
$\sigma$ has constant rank.
And again, spaces of sections (with certain regularity properties)
of~$H(E,\sigma)$ will form (infinite-dimensional) Lie groups with
respect to the (pointwise defined) product, but the correct choice
of regularity conditions is a question of functional analytic nature
to be dictated by the problem at hand.

An analogous strategy can be applied to the collection of Heisenberg
$C^*$-algebras $\mathcal{E}_{\sigma(x)}$ and $\mathcal{H}_{\sigma(x)}$
($x \in X$), but the details are somewhat intricate since the fibers
are now (infinite-dimensional) $C^*$-algebras which may depend on the
base point in a discontinuous way, since the rank of~$\sigma$ is
allowed to jump.
Still, there remains the question whether we can fit the collections of
Heisenberg $C^*$-algebras $\mathcal{E}_{\sigma(x)}$ and $\mathcal{H}_{\sigma(x)}$
into $C^*$-bundles over~$X$ which are at least upper semicontinuous.


The basic idea that allows us to bypass all these difficulties is to
introduce two smooth vector bundles over~$X$, denoted in what follows
by $\mathcal{S}(E)$ and by $\mathcal{B}(E)$, whose fibers are just
the Fr\'echet spaces of Schwartz functions and of totally bounded
smooth functions on the fibers of the original vector bundle~$E$,
respectively, i.e., $\mathcal{S}(E)_x = \mathcal{S}(E_x)$ and
\mbox{$\, \mathcal{B}(E)_x = \mathcal{B}(E_x)$:} note that
choosing any system of local trivializations of the original
vector bundle~$E$ will give rise to induced systems of local
trivializations which, together with an adequate partition of
unity, can be used to provide appropriate systems of fiber
seminorms, both for~$\mathcal{S}(E)$ and for~$\mathcal{B}(E)$.
Moreover, we use the Poisson bivector field~$\sigma$ to introduce
a fiberwise Weyl-Moyal star product on these vector bundles which,
when combined with the standard fiberwise involution, will turn
them into continuous bundles of Fr\'echet $*$-algebras,%
\footnote{Continuity of the Weyl-Moyal star product follows from
the estimate of Proposition~\ref{prp:SCHWEST} in the Appendix, in
the case of~$\mathcal{S}$, and from~\cite[Proposition~2.2, p.~12]%
{RDQ}, in the case of~$\mathcal{B}$.}
here denoted by $\mathcal{S}(E,\sigma)$ and by $\mathcal{B}(E,\sigma)$,
respectively.
We stress that even though both are locally trivial (and smooth)
as vector bundles over~$X$, they will fail to be locally trivial as
Fr\'echet $*$-algebra bundles~-- unless $\sigma$ has constant rank.%
\footnote{This is exactly the same situation as for the fiberwise
commutator in the Heisenberg algebroid or the fiberwise product
in the Heisenberg grupoid.}

The next step consists in gathering the $C^*$-norms on the fibers
of these two bundles, as defined in Section~\ref{sec:Halg}, to
construct $C^*$-fiber norms on each of them which, due to the
estimate~(\ref{eq:LREGR5}), are locally bounded. Therefore, as
seen in Section~4, they admit $C^*$-completions which we call the
\emph{DFR-bundles}, here denoted by $\mathcal{E}(E,\sigma)$ and by
$\mathcal{H}(E,\sigma)$, respectively; thus
\begin{equation}
 \mathcal{E}(E,\sigma)~=~\overline{\mathcal{S}(E,\sigma)}~~~,~~~
 \mathcal{H}(E,\sigma)~=~\overline{\mathcal{B}(E,\sigma)}~.
\end{equation}
Their section algebras are then called the \emph{DFR-algebras}.

We stress that this is a canonical construction because the Heisenberg
$C^*$-algebras are the universal enveloping $C^*$-algebras associated to
the Heisenberg-Schwartz and Heisenberg-Rieffel algebras, and even more
than that, they are their \emph{only} $C^*$-completions, so that
according to the results obtained in Section~4, the same goes for
the corresponding bundles and section algebras: the DFR-bundles
are the universal enveloping $C^*$-bundles of the corresponding
Fr\'echet $*$-algebra bundles introduced above, and even more than
that, they are their \emph{only} $C^*$-completions, and an analogous
statement holds for the DFR-algebras as ``the'' $C^*$-completions of
the corresponding section algebras.

Of course, when $\sigma$ is nondegenerate, all these constructions can
be drastically simplified; in particular, the DFR-bundles $\mathcal{E}%
(E,\sigma)$ and $\mathcal{H}(E,\sigma)$ can be obtained much more
directly from the principal bundle of symplectic frames for~$E$ as
associated bundles, and the former becomes identical with the Weyl
bundle as constructed in~\cite{Ply}.


\subsection{Recovering the Original DFR-Model}
\label{sec:RODFRM}

An important special case of the general construction outlined in the
previous section occurs when the underlying manifold~$X$ and Poisson
vector bundle~$(E,\sigma)$ are homogeneous.
More specifically, assume that $G$ is a Lie group which acts properly
on~$X$ as well as on~$E$ and such that $\sigma$ is $G$-invariant: this
means that writing
\begin{equation}
 \begin{array}{ccc}
  G \times X & \longrightarrow &     X     \\[1mm]
     (g,x)   &   \longmapsto   & g \cdot x
 \end{array}
 \qquad \mathrm{and} \qquad
 \begin{array}{ccc}
  G \times E & \longrightarrow &     E     \\[1mm]
     (g,u)   &   \longmapsto   & g \cdot u
 \end{array}
\end{equation}
for the respective actions, where the latter is linear along the fibers
and hence induces an action
\begin{equation}
 \begin{array}{ccc}
  G \times \bwedge^2 E & \longrightarrow & \bwedge^2 E \\[1mm]
          (g,u)        &   \longmapsto   &  g \cdot u
 \end{array}
\end{equation}
we should have
\begin{equation}
 \sigma(g \cdot x)~=~g \cdot \sigma(x)
 \qquad \mathrm{for}~~g \in G \,,\, x \in X~.
\end{equation}
Moreover, we shall assume that the action of~$G$ on the base manifold~$X$
is transitive.
Then, choosing a reference point $x_0$ in~$X$ and denoting by $H$ its
stability group in~$G$, by $\mathbb{E}$ the fiber of~$E$ over $x_0$ and
by $\sigma_0$ the value of the bivector field $\sigma$ at~$x_0$, we can
identify: $X$ with the homogeneous space $G/H$, $E$ with the vector
bundle $\, G \times_H \mathbb{E} \,$ associated to~$G$ (viewed as
a principal $H$-bundle over~$G/H$) and to the representation of~$H$
on~$\mathbb{E}$ obtained from the action of~$G$ on~$E$ by appropriate
restriction, and $\sigma$ with the bivector field obtained from
$\sigma_0$ by the association process.
Explicitly, for example, we identify the left coset $\, gH \in G/H \,$
with the point $\, g \cdot x_0 \in X \,$ and, for any $\, u_0 \in \mathbb{E}$,
the equivalence class~$\, [g,u_0] = [gh,h^{-1} \cdot u_0] \in G \times_H
\mathbb{E} \,$ with the vector $\, g \cdot u_0 \in E$.
As a result, we see that if the representation of~$H$ on~$\mathbb{E}$
extends to a representation of~$G$, then the associated bundle 
$\, G \times_H \mathbb{E} \,$ is globally trivial: an explicit
trivialization is given by
\begin{equation}
 \begin{array}{ccc}
       G \times_H \mathbb{E}     & \longrightarrow & G/H \times \mathbb{E}
  \\[1mm]
  [g,u_0] = [gh,h^{-1} \cdot u_0] &   \longmapsto   &  (gH,g^{-1} \cdot u_0)
 \end{array}
\end{equation}

Of course, $G$-invariance combined with transitivity implies that $\sigma$
has constant rank and hence the Heisenberg algebroid becomes a Lie algebra
bundle, the Heisenberg groupoid becomes a Lie group bundle and the DFR-%
bundles $\mathcal{E}(E,\sigma)$ and $\mathcal{H}(E,\sigma)$ become
locally trivial (and smooth) $C^*$-bundles.
Moroever, if the representation of~$H$ on~$\mathbb{E}$ extends to
a representation of~$G$, all these bundles will even be globally
trivial.

To recover the original DFR-model, consider four-dimensional Minkowski
space $\mathbb{R}^{1,3}$, which has the Lorentz group $O(1,3)$ as its
isometry group, and choose any symplectic form on $\mathbb{R}^{1,3}$,
say the one defined by the matrix $\bigl( \begin{smallmatrix} 0 & 1_2 \\
-1_2 & 0 \end{smallmatrix} \bigr)$. Let $\sigma_0$ be the corresponding
Poisson tensor and $H$ be its stability group under the action of $O(1,3)$.
Then we may recover the space $\Sigma$ from the original paper~\cite{DFR}
as the quotient space~$O(1,3)/H$.
Moreover, the vector bundle $\, O(1,3) \times_H \mathbb{R}^{1,3} \,$
associated to the canonical principal $H$-bundle $O(1,3)$ over~$\Sigma$
and the defining representation of $\, H \subset O(1,3) \,$ on~%
$\mathbb{R}^{1,3}$ carries a canonical Poisson structure defined by
using the action of~$O(1,3)$ to transport the Poisson tensor $\sigma_0$
at the distinguished point $o \in \Sigma \,$ (i.e., $[1] \in O(1,3)/H$)
to all other points of~$\Sigma$. 
According to the previous discussion, the resulting DFR-bundles will be
globally trivial, and so we have
\begin{equation}\label{eqn:ODFRB}
 \mathcal{E} \bigl( O(1,3) \times_H \mathbb{R}^{1,3},\sigma \bigr)~
 \cong~\Sigma \times \mathcal{K}
 \quad \text{and} \quad
 \mathcal{H} \bigl( O(1,3) \times_H \mathbb{R}^{1,3},\sigma \bigr)~
 \cong~\Sigma \times \mathcal{B}~.
\end{equation}
Moreover, the corresponding DFR-algebra
\[
 \Gamma_0 \left(
 \mathcal{E} \bigl( O(1,3) \times_H \mathbb{R}^{1,3},\sigma \bigr) \right)
\]
will then be the same as the one originally defined in~\cite{DFR}.

\subsection{The DFR-Model Extended}

We can extend the construction above to obtain a $C^*$-bundle over
an arbitrary spacetime whose fibers are isomorphic to the original
DFR-algebra.
Let $(M,g)$ be an $n$-dimensional Lorentz manifold with orthonormal
frame bundle $O(M,g)$.
Also, let $\sigma_0$ be a fixed bivector on~$\mathbb{R}^n$ and
$\Sigma$ its orbit under the action of the Lorentz group $O(1,n-1)$.
Consider the associated fiber bundle
\[
 \Sigma(M)~=~O(M,g) \times_{O(1,n-1)} \Sigma
\] 
over~$M$, whose bundle projection we shall denote by~$\pi$.
Using $\pi$ to pull back the tangent bundle $TM$ of~$M$ to
$\Sigma(M)$, we obtain a vector bundle $\pi^* TM$ over $\Sigma(M)$
which carries a canonical bivector field $\sigma$ defined by the
original bivector $\sigma_0$. Then the section algebra
\[
  \Gamma_0 \left( \Sigma(M),\mathcal{E} \bigl( \pi^* TM,\sigma \bigr) \right)
\]
of the resulting DFR-bundle $\mathcal{E}(\pi^* TM,\sigma)$
is not only a $C_0(\Sigma(M))$-algebra but, using the bundle
projection $\pi$, it also becomes a $C_0(M)$-algebra and hence
can be regarded as the section algebra of a $C^*$-bundle over~$M$.
Refining the discussion in Section~4 (see, in particular, equation~%
(\ref{eq:fcb})), we can show that the fibers of this sectional
representation bundle are precisely the ``tangent space
DFR-algebras''
\begin{equation}\label{eqn:LCQSTF}
 \Gamma_0 \left( \Sigma(M)_m , \mathcal{E} \bigl(
                 O(T_m M,g_m) \times_{H_m} T_m M,\sigma_m \bigr) \right) .
\end{equation}
In analogy with the term ``quantum spacetime'' employed by the
authors of~\cite{DFR} to designate the original DFR-algebra,
we suggest to refer to the functor that to each Lorentz manifold
$(M,g)$ associates the section algebra $\Gamma_0(\Sigma(M),
\mathcal{E}(\pi^* TM,\sigma))$ as ``locally covariant quantum
spacetime''.

\section{Conclusions and Outlook}

Our first goal when starting this investigation was to find an appropriate
mathematical setting for geometrical generalizations of the DFR-model~-- a
model for ``quantum spacetime'' which grew out of the attempt to avoid
the conflict between the classical idea of sharp localization of events
(ideally, at points of spacetime) and the creation of black hole regions
and horizons by the concentration of energy and momentum needed to achieve
such a sharp localization, according to the Heisenberg uncertainty relations.
To begin with, \linebreak this required translating the Heisenberg uncertainty
relations into the realm of $C^*$-algebra theory in such a way as to maintain
complete control over the dependence on the under\-lying (pre)symplectic form:
a problem that we found can be completely solved within Rieffel's theory of
strict deformation quantization, leading to a new construction of ``the
$C^*$-algebra of the canonical commutation relations'' which is an
alternative to existing ones such as the Weyl algebra~\cite{MA,MSTV}
or the resolvent algebra~\cite{BG1}.
The other main ingredient that had to be incorporated and further
developed was the general theory of bundles of locally convex $*$-%
algebras and, in particular, how the process of $C^*$-completion of
$*$-algebras at the level of fibers relates to that at the level
of section algebras.
The main outcome here is the definition of a novel procedure of
$C^*$-completion, now at the level of bundles, which to each bundle
of locally convex $*$-algebras, equipped with a locally bounded $C^*$
fiber seminorm, associates a $C^*$-bundle over the same base space
such that, at the level of $*$-algebras, the fibers of the latter
are the $C^*$-completions of the fibers of the former and, with
appropriate falloff conditions at infinity, the $C^*$-algebra of
continuous sections of the latter is the $C^*$-completion of the
$*$-algebra of continuous sections of the former.
Combining these two ingredients, we arrive at a generalization of
the mathematical construction underlying the DFR-model which, among
other things, can be applied in any dimension and in curved spacetime.

It should perhaps be emphasized at this point that it is not clear how
much of the original physical motivation behind the DFR-model carries
over to our mathematical generalization.
However, we believe our construction to be of interest in its own right,
as a tool to generate a nontrivial class of $C^*$-bundles (the DFR-%
bundles), each of which can be obtained as the (in this case, unique) 
$C^*$-completion of a concrete bundle of Fr\'echet $*$-algebras that
is canonically constructed from a given finite-dimensional Poisson vector 
bundle and, as a bundle of Fr\'echet spaces, is locally trivial and even 
smooth.
This whole process can be generalized even further by considering
other methods to generate $C^*$-algebras from an appropriate class of 
vector spaces (to replace the passage from pre-symplectic vector spaces 
to Heisenberg $C^*$-algebras) which satisfy continuity conditions in such 
a way as to allow for a lift from vector spaces and $C^*$-algebras to vector 
bundles and $C^*$-bundles, in the spirit of the functor lifting theorem 
\cite{LA}.
  
The construction of the aforementioned bundle of Fr\'echet $*$-algebras
gains additional importance when one considers the necessity of identifying 
further geometrical structures on the ``noncommutative spaces'' that the 
DFR-algebras are supposed to emulate.
A~first step in this direction is to look at the general definition of smooth
subalgebras of $C^*$-algebras, as discussed in \cite{BC} and \cite{BIO}. 
Using the results from the previous sections, it is a trivial exercise to
show that  the Heisenberg-Schwartz and Heisenberg-Rieffel algebras are smooth
subalgebras of their respective $C^*$-completions and with a little further
effort one can also show that the same holds for the algebras of smooth
sections of the corresponding bundles of Fr\'echet $*$-algebras (with
regard to their smooth structure as vector bundles).
This line of investigation will be pursued by the authors in the near future.

Another application of our construction of the DFR-bundles is that it provides
nontrivial examples of locally $C^*$-algebras \cite{Ino,Phi,Fra}, namely by
considering the algebras of continuous local sections of our bundles.
The concept of locally $C^*$-algebras is of particular importance for handling 
noncompact spaces and is encountered naturally when dealing with sheaves 
of algebras, so prominent in topology and geometry.
A collection of new results in this direction, related to what has been done
here, can be found in \cite{FP:LC*ANCS}.

We are fully aware of the fact that all these questions are predominantly 
of mathematical nature: the physical interpretation is quite another matter.
But to a certain extent this applies even to the original DFR-model, since
it is not clear how to extend the interpretation of the commutation
relations postulated in~\cite{DFR}, in terms of uncertainty relations,
to other spacetime manifolds, or even to Minkowski space in dimensions
$\neq 4$.
In~addition, it should not be forgotten that, even classically, spacetime
coordinates are \emph{not} observables: this means that the basic axiom of
algebraic quantum field theory according to which observables should be
described by (local) algebras of a certain kind (such as $C^*$-algebras
or von Neumann algebras) does \emph{not at all} imply that in quantum
gravity one should replace classical spacetime coordinate functions
by noncommuting operators.
To us, the basic question seems to be: \emph{How can we formulate spacetime
uncertainty relations, in the sense of obstructions to the possibility of
localizing events with arbitrary precision, in terms of \textbf{observables}?}
That of course stirs up the question:
\emph{How do we actually \textbf{measure} the geometry of spacetime when
quantum effects become strong?}

\section*{Acknowledgments}

Thanks are due to J.C.A.~Barata for suggesting the original problem of
extending the DFR-model to higher dimensions and analyzing its classical
limit using coherent states, which provided the motivation for all the
developments reported here.
Both authors are especially indebted to K.~Fredenhagen for the hospitality
extended to the second author during his stay in Hamburg and for sharing
his insights into the area.
Finally, we are grateful to P.L.~Ribeiro for various valuable suggestions
and remarks.

\begin{appendix}
\renewcommand\theequation{\thesection.\arabic{equation}}
\makeatletter
\@addtoreset{equation}{section}
\makeatother
\setcounter{section}{1}

\section*{Appendix:\ Estimates for the Weyl-Moyal Star Product}
\label{sec:app1}

In this appendix we establish a couple of useful results on the Weyl-Moyal
star product, beginning with an estimate for the Schwartz seminorms of the
product $f \star_\sigma g$ of two functions $f$ and~$g$ in~$\mathcal{B}(V)$
when at least one of them belongs to~$\mathcal{S}(V)$, in terms of the
pertinent seminorms of the factors; as shown in the main text, this
implies a corresponding estimate for the $C^*$-norm on $\mathcal{B}(V)$.
Such estimates can be found in~\cite[Chapters~3~\&~4]{RDQ}, but in our
work we also need some information on how the constants involved in
these estimates depend on the Poisson tensor~$\sigma$, and that part
of the required information is not provided there.
In a second part, we shall discuss the issue of approximate identities
for the Heisenberg-Schwartz algebra (noting that for the Heisenberg-%
Rieffel algebra, this would be a pointless exercise since that already
has a unit, namely the constant function~$1$).

For simplicity, we shall work in coordinates, so we choose a basis
$\, \{ e_1^{},\ldots,e_n^{} \} \,$ of~$V$ and introduce the corresponding
dual basis $\, \{ e^1,\ldots,e^n \} \,$ of~$V^*$, expanding vectors $x$
in~$V$ and covectors $\xi$ in~$V^*$ according to $\, x = x^j e_j^{}$,
$\xi = \xi_j^{} e^j \,$ and the bivector $\sigma$ according to
$\, \sigma(\xi,\eta) = \sigma^{kl} \xi_k^{} \xi_l^{}$; then
\[
 \eta_j^{} (\sigma^\sharp \xi)^j~=~\langle \eta , \sigma^\sharp \xi \rangle~
 =~\sigma(\xi,\eta)~=~\sigma^{kj} \xi_k^{} \eta_j^{}
\]
implies that $\, (\sigma^\sharp \xi)^j = \sigma^{kj} \xi_k^{} \,$.
Moreover, using multiindex notation, we can define the topologies
of~$\mathcal{S}(V)$ and of~$\mathcal{B}(V)$ in terms of the Schwartz
seminorms $s_{p,q}^{}$ (for~$\mathcal{S}(V)$) and $s_{0,q}^{}$ (for~%
$\mathcal{B}(V)$), defined by
\begin{equation} \label{eq:SCHWSN}
 s_{p,q}^{}(f)~=~\sum_{|\alpha| \leqslant p , |\beta| \leqslant q} \,
                \sup_{x \in V} \, |x^\alpha \, \partial_\beta f (x)|~.
\end{equation}

To begin with, we note the following explicit estimate for the $L^1$-norm
of the (inverse) Fourier transform $\check{f}$ of a Schwartz function~$f$
in terms of an appropriate Schwartz seminorm:
\begin{equation} \label{eq:L1EST}
 \| \check{f} \|_1^{}~\leqslant~(2\pi)^n s_{2n,2n}^{}(f)
 \qquad \mbox{for $\, f \in \mathcal{S}(V)$}~.
\end{equation}
\begin{proof}
 \begin{eqnarray*}
  \| \mathcal{F}^{-1} f \|_1^{} \!\!
  &=&\!\! \int_{V^*} d\xi~|(\mathcal{F}^{-1} f)(\xi)| \\[1ex]
  &=&\!\! \int \frac{d\xi_1}{1+\xi_1^2} \ldots \frac{d\xi_n}{1+\xi_n^2}~
          \bigl| \, (1+\xi_1^2) \ldots (1+\xi_n^2)
                    (\mathcal{F}^{-1} f)(\xi) \, \bigr| \\[1ex]
  &\leqslant&\!\! \pi^n \, \sup_{\xi \in V^*} \,
                  \Bigl| \Bigl( \mathcal{F}^{-1}
                  \bigl( (1-\partial_{x^1}^2) \ldots (1-\partial_{x^n}^2) f
                         \bigr) \Bigr) (\xi) \Bigr| \\[1ex]
  &\leqslant&\!\! \frac{1}{2^n} \, \int_V dx~
                  \bigl|  \bigl( (1-\partial_{x^1}^2) \ldots
                                 (1-\partial_{x^n}^2) f \bigr) (x) \bigr|
  \\[1ex]
  &=&\!\! \frac{1}{2^n} \,
          \int \frac{dx^1}{1+(x^1)^2} \ldots \frac{dx^n}{1+(x^n)^2}~
          \Bigl| \, (1+(x^1)^2) \ldots (1+(x^n)^2) \\[-1ex]
  & &\!\! \hphantom{\frac{1}{2^n} \, \int
                    \frac{dx^1}{1+(x^1)^2} \ldots \frac{dx^n}{1+(x^n)^2}~
                    \bigl| \,}
                    \bigl( (1-\partial_{x^1}^2) \ldots (1-\partial_{x^n}^2)
                    f \bigr) (x) \Bigr|  \\
  &\leqslant&\!\! (2\pi)^n s_{2n,2n}^{}(f)~.
 \end{eqnarray*}
\end{proof}

\noindent
Now from equation~(\ref{eq:WMOY7}), we conclude that
\[
 \sup_{x \in V} \, \bigl| (f \star_\sigma g)(x) \bigr|~
 \leqslant~\sup_{x \in V} \, |f(x)| \, \int_{V^*} d\xi~|\check{g}(\xi)|
 \qquad \mbox{for $\, f \in \mathcal{B}(V)$, $g \in \mathcal{S}(V)$}~,
\]
and hence equation~(\ref{eq:L1EST}) gives the following estimate:
\begin{equation} \label{eq:SCHWEST2}
 s_{0,0}^{}(f \star_\sigma g)~\leqslant~(2\pi)^n s_{0,0}^{}(f) s_{2n,2n}^{}(g)
 \qquad \mbox{for $\, f \in \mathcal{B}(V)$, $g \in \mathcal{S}(V)$}~.
\end{equation}
In order to generalize this inequality to higher order Schwartz seminorms,
we need the following facts.
\begin{lem} \label{lem:LRSTP} 
 For $\, f \in \mathcal{B}(V) \,$ and $\, g \in \mathcal{S}(V)$,
 we have the Leibniz rule
 \[
  \frac{\partial}{\partial x^j} \, \bigl( f \star_\sigma g \bigr)~
  =~\frac{\partial f}{\partial x^j} \star_\sigma g \, + \,
    f \star_\sigma \frac{\partial g}{\partial x^j}~,
 \]
 and therefore the higher order Leibniz rule
 \[
  \partial_\alpha \, \bigl( f \star_\sigma g \bigr)~
  =~\sum_{\beta \leqslant \alpha} \binom{\alpha}{\beta} \;
    \partial_\beta f \star_\sigma \partial_{\alpha-\beta} g~.
 \]
\end{lem}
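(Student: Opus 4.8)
The plan is to differentiate the integral representation~(\ref{eq:WMOY7}) under the integral sign. Recall that for $\, f \in \mathcal{B}(V) \,$ and $\, g \in \mathcal{S}(V)$, equation~(\ref{eq:WMOY7}) reads
\[
 (f \star_\sigma g)(x)~
 =~\int_{V^*} d\xi~f(x + {\textstyle \frac{1}{2}} \sigma^\sharp \xi) \,
   \check{g}(\xi) \; e^{i \langle \xi,x \rangle}~,
\]
an absolutely convergent integral, since $f$ is bounded while $\, \check{g} \in \mathcal{S}(V^*)$. First I would check that one may differentiate under the integral sign: the derivative of the integrand with respect to~$x^j$ is
\[
 \frac{\partial f}{\partial x^j}(x + {\textstyle \frac{1}{2}} \sigma^\sharp \xi) \,
 \check{g}(\xi) \; e^{i \langle \xi,x \rangle}~
 + \; f(x + {\textstyle \frac{1}{2}} \sigma^\sharp \xi) \;
   \bigl( i \xi_j \check{g}(\xi) \bigr) \, e^{i \langle \xi,x \rangle}~,
\]
and since $f$ and all of its first derivatives are bounded (as $\, f \in \mathcal{B}(V)$) while both $\check{g}(\xi)$ and $\xi_j \check{g}(\xi)$ are integrable (as $\, \check{g} \in \mathcal{S}(V^*)$), each summand is dominated, locally uniformly in~$x$, by a fixed $L^1$-function of~$\xi$; the usual theorem on differentiation of parameter-dependent integrals then applies.

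It remains to recognize the two resulting terms. Integrated against $d\xi$, the first summand is precisely $\, (\partial f/\partial x^j) \star_\sigma g \,$ by~(\ref{eq:WMOY7}) again, which is legitimate since $\, \partial f/\partial x^j \in \mathcal{B}(V)$. For the second summand, I would invoke the elementary fact that the inverse Fourier transform intertwines multiplication by~$i \xi_j$ with differentiation by~$x^j$: from the defining formula~(\ref{eq:FOURT2}), an integration by parts gives that $\, \xi \mapsto i \xi_j \check{g}(\xi) \,$ is the inverse Fourier transform of $\, \partial g/\partial x^j$. Hence the second summand, integrated against $d\xi$, equals $\, f \star_\sigma (\partial g/\partial x^j) \,$ by~(\ref{eq:WMOY7}) once more, noting that $\, \partial g/\partial x^j \in \mathcal{S}(V)$. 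Adding the two expressions yields the first-order Leibniz rule.

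The higher-order rule then follows by a straightforward induction on~$|\alpha|$: granting the asserted formula for $\partial_\alpha(f \star_\sigma g)$, one notes that every summand $\, \partial_\beta f \star_\sigma \partial_{\alpha-\beta} g \,$ still has its left factor in~$\mathcal{B}(V)$ and its right factor in~$\mathcal{S}(V)$, so the first-order rule may be reapplied to it; differentiating once more in~$x^j$ and regrouping by the usual Pascal identity for binomial coefficients of multiindices yields the formula with $\alpha$ replaced by the multiindex obtained from it by incrementing the $j$-th entry. The only genuine subtlety in the whole argument is the justification of differentiation under the integral sign, which the domination estimate above secures; that same estimate also shows that each $\partial_\alpha(f \star_\sigma g)$ is continuous, re-confirming that $\, f \star_\sigma g \in \mathcal{S}(V)$, so that the iteration makes sense. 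Everything else is routine bookkeeping.
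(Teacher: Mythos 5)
Your proof is correct and follows exactly the paper's route: the paper's entire proof is the one-line instruction to differentiate equation~(\ref{eq:WMOY7}) under the integral sign, and you carry this out, merely adding the (routine but welcome) justification of the differentiation by domination, the identification $\, (\partial g/\partial x^j)^{\check{}}(\xi) = i\xi_j \check{g}(\xi)$, and the standard induction for the higher-order rule.
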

\begin{proof}
 Simply differentiate equation~(\ref{eq:WMOY7}) under the integral sign.
\end{proof}

\begin{lem}
 For $\, f \in \mathcal{B}(V) \,$ and $\, g \in \mathcal{S}(V)$, we have
 \[
  \mathsl{x}^j \bigl( f \star_\sigma g \bigr)~
  =~f \star_\sigma \mathsl{x}^j g \, + \, \nabla_\sigma^j f \star_\sigma g~,
 \]
 and therefore
 \[
  \mathsl{x}^\alpha \bigl( f \star_\sigma g \bigr)~
  =~\sum_{\beta \leqslant \alpha} \binom{\alpha}{\beta} \;
    \nabla_\sigma^{\alpha-\beta} f \star_\sigma \mathsl{x}^\beta g~,
 \]
 where $\nabla_\sigma$ denotes the (pre-)symplectic gradient, defined by
 \[
  \nabla_\sigma^j h~
  =~\frac{i}{2} \, \sigma^{jk} \, \frac{\partial h}{\partial x^k}~.
 \]
 and $\, \nabla_\sigma^\alpha = \prod_{j=1}^n (\nabla_\sigma^j)^{\alpha_j}$.
\end{lem}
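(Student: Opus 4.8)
The plan is to argue in complete parallel to the preceding Leibniz rule (Lemma~\ref{lem:LRSTP}), the only change being that multiplication by the coordinate function $\mathsl{x}^j$ takes over the role played there by differentiation $\partial/\partial x^j$. One starts again from the integral representation~(\ref{eq:WMOY7}),
\[
 (f \star_\sigma g)(x)~=~\int_{V^*} d\xi~f\bigl( x + {\textstyle \frac{1}{2}} \sigma^\sharp \xi \bigr) \, \check{g}(\xi) \; e^{i \langle \xi,x \rangle}~,
\]
which for $\, f \in \mathcal{B}(V) \,$ and $\, g \in \mathcal{S}(V) \,$ is an absolutely (indeed rapidly) convergent integral, since $f$ stays totally bounded after the affine substitution $\, \xi \mapsto x + \frac{1}{2}\sigma^\sharp\xi \,$ while $\check{g} \in \mathcal{S}(V^*)$; with the same hypotheses one has $\, \nabla_\sigma^j f \in \mathcal{B}(V) \,$ and $\, \mathsl{x}^j g \in \mathcal{S}(V)$, so the two star products on the right-hand side of the asserted identity are again defined by~(\ref{eq:WMOY7}).

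First I would multiply~(\ref{eq:WMOY7}) by $\mathsl{x}^j$, use the elementary relation $\, \mathsl{x}^j \, e^{i\langle\xi,x\rangle} = -i\,\partial_{\xi_j} e^{i\langle\xi,x\rangle} \,$ to move the extra factor onto the exponential, and then integrate by parts in $\xi_j$; the boundary terms vanish because $\, \xi \mapsto f(x + \frac{1}{2}\sigma^\sharp\xi)\,\check{g}(\xi) \,$ and all of its $\xi$-derivatives are rapidly decreasing. The outcome is
\[
 \mathsl{x}^j (f \star_\sigma g)(x)~=~i \int_{V^*} d\xi~\partial_{\xi_j}\!\bigl[\, f\bigl( x + {\textstyle \frac{1}{2}}\sigma^\sharp\xi \bigr) \, \check{g}(\xi) \,\bigr] \, e^{i\langle\xi,x\rangle}~,
\]
and expanding the $\xi_j$-derivative by the ordinary Leibniz rule yields exactly two terms. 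When the derivative falls on $\check{g}$, one substitutes the elementary Fourier identity $\, \partial_{\xi_j}\check{g} = -i\,\mathcal{F}^{-1}(\mathsl{x}^j g) \,$ and compares with~(\ref{eq:WMOY7}) to recognize $\, f \star_\sigma \mathsl{x}^j g$. When the derivative falls on $\, f( x + \frac{1}{2}\sigma^\sharp\xi )$, the chain rule together with the coordinate expression $\, (\sigma^\sharp\xi)^k = \sigma^{lk}\xi_l \,$ recorded at the beginning of this appendix gives $\, \partial_{\xi_j}\bigl[\, f( x + \frac{1}{2}\sigma^\sharp\xi ) \,\bigr] = \frac{1}{2}\,\sigma^{jk}\,(\partial_k f)( x + \frac{1}{2}\sigma^\sharp\xi ) \,$; comparing once more with~(\ref{eq:WMOY7}) (now with $\partial_k f$ in the role of $f$) and using bilinearity of $\star_\sigma$ identifies this term with $\, \frac{i}{2}\,\sigma^{jk}\,(\partial_k f) \star_\sigma g = (\nabla_\sigma^j f) \star_\sigma g$. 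Adding the two contributions gives the first displayed formula of the lemma.

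The higher-order formula then follows by induction on $|\alpha|$, exactly as in Lemma~\ref{lem:LRSTP}. The key observation is that the operators at work commute: multiplication by $\mathsl{x}^k$ on the right-hand factor commutes with itself, while the operators $\nabla_\sigma^j$ on the left-hand factor are constant-coefficient differential operators and hence commute with one another. Thus one may apply the first-order rule one coordinate at a time and collect terms using Pascal's rule in each component, obtaining $\, \mathsl{x}^\alpha(f \star_\sigma g) = \sum_{\beta \leqslant \alpha}\binom{\alpha}{\beta}\,\nabla_\sigma^{\alpha-\beta} f \star_\sigma \mathsl{x}^\beta g \,$, where $\, \binom{\alpha}{\beta} = \prod_{j} \binom{\alpha_j}{\beta_j} \,$ is the usual multi-index binomial coefficient; the bookkeeping is verbatim the one in the classical higher-order Leibniz computation.

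I do not foresee a genuine obstacle: every step is a routine manipulation of an absolutely convergent integral. The only point deserving a word of care is the justification of the integration by parts and of differentiating under the integral sign~-- that is, the claim that $\, f(x + \frac{1}{2}\sigma^\sharp\xi)\,\check{g}(\xi) \,$ and its $\xi$-derivatives decay rapidly in $\xi$, locally uniformly in $x$~-- but this is immediate from $\, f \in \mathcal{B}(V) \,$ and $\, g \in \mathcal{S}(V) \,$ and is precisely the type of estimate already used (via the appendix) to prove continuity of the Weyl-Moyal star product.
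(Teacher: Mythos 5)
Your proof is correct and follows essentially the same route as the paper: a single integration by parts in $\xi_j$ applied to the integral representation~(\ref{eq:WMOY7}), using the Fourier identity that exchanges multiplication by $\mathsl{x}^j$ with $\partial_{\xi_j}$, together with the chain rule on $f(x+\frac{1}{2}\sigma^\sharp\xi)$; the only cosmetic difference is that you start from $\mathsl{x}^j(f\star_\sigma g)$ and move the factor onto the exponential, whereas the paper starts from $f\star_\sigma\mathsl{x}^j g$ and moves the derivative off $\check{g}$, which is the same computation read in the opposite direction. The inductive bookkeeping for the multi-index version is also fine.
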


\begin{proof}
 For $\, f \in \mathcal{B}(V) \,$ and $\, g \in \mathcal{S}(V)$, we have,
 according to equation~(\ref{eq:WMOY7}),
 \begin{eqnarray*}
  (f \star_\sigma \mathsl{x}^j g)(x) \!\!
  &=&\!\! \int_{V^*} d\xi~
          f(x + {\textstyle \frac{1}{2}} \sigma^\sharp \xi) \,
          (\mathcal{F}^{-1} (\mathsl{x}^j g))(\xi) \;
          e^{i \langle \xi, x \rangle} \\
  &=&\!\! i \, \int_{V^*} d\xi~
          f(x + {\textstyle \frac{1}{2}} \sigma^\sharp \xi) \,
          \frac{\partial \check{g}}{\partial \xi_j^{}}(\xi) \;
          e^{i \langle \xi, x \rangle} \\
  &=&\!\! - \, i \, \int_{V^*} d\xi
          \biggl( \Bigl( \frac{\partial}{\partial \xi_j^{}}
                         f(x + {\textstyle \frac{1}{2}} \sigma^\sharp \xi)
                         \Bigr) \, \check{g}(\xi) \; e^{i \langle \xi, x \rangle}
  \\[-1ex]
  & & \mbox{} \hspace*{5em} + \,
          f(x + {\textstyle \frac{1}{2}} \sigma^\sharp \xi) \,
          \check{g}(\xi) \; \frac{\partial}{\partial \xi_j^{}} \,
          e^{i \langle \xi, x \rangle} \biggr) \\
  &=&\!\! \frac{i}{2} \, \sigma^{kj}
          \int_{V^*} d\xi~\frac{\partial f}{\partial x^k}
          (x + {\textstyle \frac{1}{2}} \sigma^\sharp \xi) \,
          \check{g}(\xi) \; e^{i \langle \xi, x \rangle} \, + \,
          x^j (f \star_\sigma g)(x) \\[1ex]
  &=&\!\! \mbox{} - \, (\nabla_\sigma^j f \star_\sigma g)(x) \, + \,
          x^j (f \star_\sigma g)(x)~.
 \end{eqnarray*}
\end{proof}

\noindent
Combining these two lemmas gives the formula
\begin{equation} \label{eq:PRSTP} 
 \begin{array}{c}
  {\displaystyle
   \mathsl{x}^\alpha \partial_\beta^{} (f \star_\sigma g)~
   =~\sum_{\gamma \leqslant \alpha , \delta \leqslant \beta} \,
     \binom{\alpha}{\gamma} \binom{\beta}{\delta} \;
     \bigl( \nabla_\sigma^{\alpha-\gamma} \partial_{\beta-\delta}^{} f \,
            \star_\sigma \mathsl{x}^\gamma \partial_\delta^{} g \bigr)}
  \\[3ex]
  \mbox{for $\, f \in \mathcal{B}(V)$, $g \in \mathcal{S}(V)$}~.
 \end{array}
\end{equation}
Taking the sup norm (which is just $s_{0,0}^{}$) and applying the definition
of the seminorms $s_{p,q}^{}$ together with the estimate~(\ref{eq:SCHWEST2})
established above, we arrive at the following
\begin{prp} \label{prp:SCHWEST} 
 For any two natural numbers $p$, $q$, there exists a polynomial $P_{p,q}$
 of degree~$p$ in the matrix elements of $\sigma$, with coefficients that
 depend only on $n$, $p$ and~$q$, such that the following estimate holds:
\begin{equation}
 \begin{array}{c}
  s_{p,q}^{}(f \star_\sigma g)~
  \leqslant~|P_{p,q}^{}(\sigma)| \, s_{0,p+q}^{}(f) \, s_{p+2n,q+2n}^{}(g)
  \\[1ex]
  \mbox{for $\, f \in \mathcal{B}(V)$, $g \in \mathcal{S}(V)$}~.
 \end{array}
\end{equation}
\end{prp}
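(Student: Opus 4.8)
The plan is to start from the explicit Leibniz-type identity~(\ref{eq:PRSTP}) and feed each of its finitely many summands into the base estimate~(\ref{eq:SCHWEST2}). Fix multi-indices $\alpha,\beta$ with $|\alpha| \leqslant p$, $|\beta| \leqslant q$. The summand of~(\ref{eq:PRSTP}) indexed by $\gamma \leqslant \alpha$, $\delta \leqslant \beta$ is a combinatorial multiple of $\nabla_\sigma^{\alpha-\gamma}\partial_{\beta-\delta} f \star_\sigma \mathsl{x}^\gamma\partial_\delta g$, and since $\mathsl{x}^\gamma\partial_\delta g$ again lies in $\mathcal{S}(V)$ while $\nabla_\sigma^{\alpha-\gamma}\partial_{\beta-\delta} f$ lies in $\mathcal{B}(V)$, estimate~(\ref{eq:SCHWEST2}) bounds its sup norm by $(2\pi)^n\, s_{0,0}^{}(\nabla_\sigma^{\alpha-\gamma}\partial_{\beta-\delta} f)\, s_{2n,2n}^{}(\mathsl{x}^\gamma\partial_\delta g)$. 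Summing these inequalities over all admissible $\alpha,\beta,\gamma,\delta$ produces $s_{p,q}^{}(f \star_\sigma g)$ on the left, so what remains is to bound the two factors on the right in terms of $s_{0,p+q}^{}(f)$ and $s_{p+2n,q+2n}^{}(g)$, keeping track of the powers of $\sigma$ along the way.

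For the first factor I would expand $\nabla_\sigma^{\alpha-\gamma} = \prod_j \bigl(\tfrac{i}{2}\sigma^{jk}\partial_k\bigr)^{\alpha_j-\gamma_j}$ as a constant-coefficient differential operator: it is a finite sum of terms, each being a combinatorial constant times a product of $|\alpha-\gamma|$ entries of $\sigma$ times a partial derivative of order $|\alpha-\gamma|$. Hence $s_{0,0}^{}(\nabla_\sigma^{\alpha-\gamma}\partial_{\beta-\delta} f)$ is bounded by a combinatorial constant, times a monomial of degree $|\alpha-\gamma| \leqslant |\alpha| \leqslant p$ in the matrix elements of $\sigma$, times $s_{0,|\alpha-\gamma|+|\beta-\delta|}^{}(f) \leqslant s_{0,p+q}^{}(f)$. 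For the second factor I would use the ordinary Leibniz rule to expand $\partial_{\beta'}(\mathsl{x}^\gamma\partial_\delta g)$ for $|\beta'| \leqslant 2n$: only sub-multi-indices of $\gamma$ contribute to differentiating $\mathsl{x}^\gamma$, so each term is a constant times $\mathsl{x}^{\gamma'}\partial_{\delta'} g$ with $|\gamma'| \leqslant |\gamma| \leqslant p$ and $|\delta'| \leqslant |\beta'|+|\delta| \leqslant 2n+q$; multiplying by $\mathsl{x}^{\alpha'}$ with $|\alpha'| \leqslant 2n$ only raises the polynomial weight to $\leqslant 2n+p$. Summing over $|\alpha'|,|\beta'| \leqslant 2n$ therefore gives $s_{2n,2n}^{}(\mathsl{x}^\gamma\partial_\delta g) \leqslant C\, s_{p+2n,q+2n}^{}(g)$ with $C$ depending only on $n$, $p$ and $q$.

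Putting these two bounds back into~(\ref{eq:SCHWEST2}), every summand of~(\ref{eq:PRSTP}) is controlled by a combinatorial constant, times a monomial of degree $\leqslant p$ in the entries of $\sigma$, times $s_{0,p+q}^{}(f)\, s_{p+2n,q+2n}^{}(g)$. Since the number of summands and the combinatorial constants depend only on $n$, $p$ and $q$, collecting all the $\sigma$-monomials yields a polynomial $P_{p,q}$ in the matrix elements of $\sigma$, of degree $p$ (the top degree being attained by the terms with $\gamma = 0$ and $|\alpha| = p$), for which $s_{p,q}^{}(f \star_\sigma g) \leqslant |P_{p,q}^{}(\sigma)|\, s_{0,p+q}^{}(f)\, s_{p+2n,q+2n}^{}(g)$, as asserted. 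I do not expect a genuine obstacle here: the argument is purely combinatorial bookkeeping, and the only point deserving care is verifying that the index shifts land exactly in the advertised seminorms $s_{0,p+q}^{}$ and $s_{p+2n,q+2n}^{}$ and in nothing larger — in particular, that the loss of $2n$ derivatives and $2n$ powers of $\mathsl{x}$ arises solely from a single application of the crude $L^1$-estimate~(\ref{eq:L1EST}) hidden inside~(\ref{eq:SCHWEST2}) and is not compounded as $p$ or $q$ increases.
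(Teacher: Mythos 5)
Your proposal is correct and takes essentially the same route as the paper: the paper likewise obtains the Proposition by applying the sup-norm estimate~(\ref{eq:SCHWEST2}) termwise to the identity~(\ref{eq:PRSTP}) derived from the two Leibniz-type lemmas, and your bookkeeping of the $\sigma$-monomials coming from $\nabla_\sigma^{\alpha-\gamma}$ and of the index shifts into $s_{0,p+q}^{}(f)$ and $s_{p+2n,q+2n}^{}(g)$ simply spells out the step the paper leaves implicit.
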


With these formulas and estimates at our disposal, we can address the
issue of constructing approximate identities for the Heisenberg-Schwartz
algebra $\mathcal{S}_\sigma$. The fact that this is a $*$-subalgebra (and
even a $*$-ideal) of the Heisenberg-Rieffel algebra~$\mathcal{B}_\sigma$
which does have a unit, namely the constant function~$1$, indicates that
we should look for sequences $(\chi_k)_{k \in \mathbb{N}}$ of Schwartz
functions $\, \chi_k \in \mathcal{S}_\sigma \,$ which converge to~$1$
in some appropriate sense: without loss of generality, we may assume
these functions to be real-valued and to satisfy $\, 0 \leqslant \chi_k
\leqslant 1$.
Thus we expect that $\, \chi_k \to 1 \,$ and $\, \partial_\alpha \chi_k
\to 0 \,$ for $\, \alpha \neq 0 \,$ (or equivalently, $\partial_\alpha
(1 - \chi_k) \to 0 \,$ for all $\alpha$) as $\, k \to \infty$, but
this convergence can at best hold uniformly on compact subsets of~$V$.%
\footnote{Typically, we may even take $(\chi_k)_{k \in \mathbb{N}}$ to
be a sequence of test functions $\, \chi_k \in \mathcal{D}(V) \,$ that
is monotonically increasing and converges to~$1$ in~$\mathcal{E}(V)$.
Note, however, that this sequence does not converge to~$1$ in the
space $\mathcal{S}(V)$ and not even in the space $\mathcal{B}(V)$,
since the function~$1$ does not go to~$0$ at infinity: convergence
is only uniform on compact subsets but not on the entire space.}
Still, it turns out that any such sequence yields an approximate
identity for the Heisenberg-Schwartz algebra~-- provided we also
require the partial derivatives $\partial_\alpha (1 - \chi_k)$ to
be uniformly bounded in~$k$, for all $\alpha$:
\begin{prp} \label{prp:APRIHS} 
 Let $(\chi_k)_{k \in \mathbb{N}}$ be a sequence of Schwartz functions
 $\, \chi_k \in \mathcal{S}(V) \,$ satisfying \linebreak $0 \leqslant
 \chi_k \leqslant 1 \,$ which is bounded in the Fr\'echet space
 $\mathcal{B}(V)$ and converges to~$1$ in the Fr\'echet space
 $\mathcal{E}(V)$, that is, in the topology of uniform convergence
 of all derivatives on compact subsets.
 Then $(\chi_k)_{k \in \mathbb{N}}$ is an approximate identity in the
 Heisenberg-Schwartz algebra $\mathcal{S}_\sigma$, i.e., for any
 $\, f \in \mathcal{S}_\sigma$, we have that $\, \chi_k \star_\sigma f
 \to f \,$ in~$\mathcal{S}_\sigma$, as $\, k \to \infty$.
\end{prp}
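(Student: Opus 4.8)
The plan is to exploit the fact, recorded just before the statement, that the constant function $1$ is the unit of the Heisenberg-Rieffel algebra $\mathcal{B}_\sigma$, so that $1 \star_\sigma f = f$ for every $f \in \mathcal{S}_\sigma$. Writing $\psi_k = 1 - \chi_k \in \mathcal{B}(V)$, we then have $\chi_k \star_\sigma f - f = -\,\psi_k \star_\sigma f$, and the assertion becomes: $\psi_k \star_\sigma f \to 0$ in $\mathcal{S}_\sigma$ as $k \to \infty$. Two features of $(\psi_k)$ will be used. First, $(\psi_k)$ is bounded in the Fr\'echet space $\mathcal{B}(V)$: indeed $|\psi_k| \leqslant 1$ since $0 \leqslant \chi_k \leqslant 1$, while $\partial_\beta \psi_k = -\,\partial_\beta \chi_k$ for $|\beta| \geqslant 1$ and $(\chi_k)$ is by hypothesis bounded in $\mathcal{B}(V)$. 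Applying Proposition~\ref{prp:SCHWEST} with the totally bounded factor taken to be~$\psi_k$ and the Schwartz factor taken to be~$f$, we get, for each pair $p,q$, a bound $s_{p,q}(\psi_k \star_\sigma f) \leqslant |P_{p,q}(\sigma)| \, s_{0,p+q}(\psi_k) \, s_{p+2n,q+2n}(f) \leqslant C_{p,q}$ that is uniform in~$k$; thus $(\psi_k \star_\sigma f)_{k \in \mathbb{N}}$ is bounded in $\mathcal{S}(V)$. Second, the hypothesis $\chi_k \to 1$ in $\mathcal{E}(V)$ says precisely that $\partial_\beta \psi_k \to 0$ uniformly on compact subsets of~$V$ for every multiindex~$\beta$, i.e.\ $\psi_k \to 0$ in $\mathcal{E}(V)$.

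I would then isolate an elementary interpolation principle: a sequence $(g_k)$ that is bounded in $\mathcal{S}(V)$ and converges to~$0$ in $\mathcal{E}(V)$ also converges to~$0$ in $\mathcal{S}(V)$. Fixing $p,q$ and $\epsilon > 0$, a routine estimate using the bound on $s_{p+1,q}(g_k)$ gives $\sup_{|x| \geqslant R} |x^\alpha \partial_\beta g_k(x)| < \epsilon$ for all~$k$ and all $|\alpha| \leqslant p$, $|\beta| \leqslant q$, once $R$ is large; on the other hand $\sup_{|x| \leqslant R} |x^\alpha \partial_\beta g_k(x)| \leqslant R^p \sup_{|x| \leqslant R} |\partial_\beta g_k(x)| \to 0$ by $\mathcal{E}(V)$-convergence, so $s_{p,q}(g_k) < 2\epsilon$ for $k$ large. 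Together with the first paragraph, this reduces the proposition to showing that $\psi_k \star_\sigma f \to 0$ in $\mathcal{E}(V)$.

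For the latter I would apply the Leibniz rule of Lemma~\ref{lem:LRSTP}, $\partial_\beta(\psi_k \star_\sigma f) = \sum_{\mu \leqslant \beta} \binom{\beta}{\mu} \, (\partial_\mu \psi_k) \star_\sigma (\partial_{\beta - \mu} f)$, which reduces matters to the claim: if $(\phi_k)$ is a sequence in $\mathcal{B}(V)$, uniformly bounded in the sup norm, with $\phi_k \to 0$ uniformly on compact subsets of~$V$, and $h \in \mathcal{S}(V)$, then $\phi_k \star_\sigma h \to 0$ uniformly on compact subsets of~$V$ (here $\phi_k = \psi_k$ or $\phi_k = -\,\partial_\mu \chi_k$, all satisfying the hypotheses by the above). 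To prove the claim, use the representation $(\phi_k \star_\sigma h)(x) = \int_{V^*} d\xi \; \phi_k\bigl(x + \tfrac12 \sigma^\sharp \xi\bigr) \, \check h(\xi) \, e^{i \langle \xi, x \rangle}$ of equation~(\ref{eq:WMOY7}); given a compact set $K \subset V$ and $\epsilon > 0$, choose a closed ball $\bar B \subset V^*$ with $\int_{V^* \setminus \bar B} |\check h(\xi)| \, d\xi$ small (possible since $\check h \in \mathcal{S}(V^*) \subset L^1(V^*)$) and split the integral: the part over $V^* \setminus \bar B$ is bounded by $(\sup_k \|\phi_k\|_\infty) \int_{V^* \setminus \bar B} |\check h|$, while over $\bar B$ the point $x + \tfrac12 \sigma^\sharp \xi$ stays in the fixed compact set $K + \tfrac12 \sigma^\sharp(\bar B)$, so that part is at most $\|\check h\|_1$ times the supremum of $|\phi_k|$ over that compact set, which tends to~$0$.

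The conceptual difficulty, and the reason the statement is not trivial, is that $\chi_k \to 1$ cannot hold uniformly on all of~$V$ (a Schwartz function vanishes at infinity, the constant~$1$ does not), so there is no direct sup-norm control on $\psi_k \star_\sigma f$; the argument works by trading this missing uniformity for the a~priori boundedness furnished by Proposition~\ref{prp:SCHWEST}, combined with the ``locality at infinity'' of the star product~-- the rapid decay of $\check h$ confines the relevant values of $\phi_k$ to a compact set whenever $x$ ranges over a compact set.
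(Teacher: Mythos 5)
Your proposal is correct, and it reaches the same analytic core as the paper's proof while organizing it differently. The shared kernel is the pair of ingredients the paper also uses: the uniform bounds coming from Proposition~\ref{prp:SCHWEST} (via boundedness of $(1-\chi_k)$ in $\mathcal{B}(V)$) and the splitting of the oscillatory integral in equation~(\ref{eq:WMOY7}) over a large compact set $K^*\subset V^*$ versus its complement, which confines the relevant values of the $\mathcal{B}$-factor to the compact set $K+\tfrac12\sigma^\sharp K^*$. The difference is structural: the paper estimates $s_{p,q}(\chi_k\star_\sigma f-f)$ directly through the full product formula~(\ref{eq:PRSTP}), in which the monomial weights $\mathsl{x}^\alpha$ are traded for symplectic gradients $\nabla_\sigma$ acting on $1-\chi_k$, and then performs a two-region argument in $x$-space ($x\notin K$ handled by uniform vanishing at infinity of the product, $x\in K$ by the integral splitting); you instead bypass~(\ref{eq:PRSTP}) and the $\nabla_\sigma$ identity entirely, using only the plain Leibniz rule of Lemma~\ref{lem:LRSTP} together with an explicitly isolated interpolation principle (a sequence bounded in $\mathcal{S}(V)$ that tends to $0$ in $\mathcal{E}(V)$ tends to $0$ in $\mathcal{S}(V)$), so that all that remains to check is convergence to $0$ in $\mathcal{E}(V)$. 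Your route is more modular and the interpolation lemma is reusable; the paper's route keeps everything in one chain of seminorm estimates and exhibits the uniformity in all parameters explicitly, at the cost of invoking the heavier identity~(\ref{eq:PRSTP}). Both arguments are complete; your reduction $\chi_k\star_\sigma f-f=-(1-\chi_k)\star_\sigma f$ is legitimate since $1$ is the unit of $\mathcal{B}_\sigma$ and the mixed product~(\ref{eq:WMOY7}) is linear in the $\mathcal{B}$-factor, and your tail estimate in the interpolation step (using $s_{p+1,q}$ to gain a factor $1/R$) is sound.
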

\begin{proof}~
 Fixing $\, f \in \mathcal{S}_\sigma \,$ and $\, p,q \in \mathbb{N}$, we have
 the following estimate,
 \[
  s_{p,q}^{}(\chi_k \star_\sigma f - f)~
  \leqslant~C_0 \, \max_{\genfrac{}{}{0pt}{}{|\alpha|,|\gamma| \leqslant p}%
                                            {|\beta|,|\delta|  \leqslant q}} \;
            \sup_{x \in V} \,
            \bigl| \bigl( \nabla_\sigma^\alpha \partial_\beta^{} (1-\chi_k)
                          \, \star_\sigma \,
                          \mathsl{x}^\gamma \partial_\delta^{} f \bigr) (x) \,
                   \bigr|~,
 \]
 where
 \[
  C_0~=~\sum_{|\alpha| \leqslant p , |\beta| \leqslant q}
            \sum_{\gamma \leqslant \alpha , \delta \leqslant \beta\vphantom{|}} \,
            \binom{\alpha}{\gamma} \binom{\beta}{\delta}~,
 \]
 which follows directly from equation~(\ref{eq:PRSTP}) after some relabeling.
 Now given $\, \epsilon > 0$, we shall split this sup norm into two parts.
 First, we use that the functions $\chi_k$, and hence also the functions
 $\nabla_\sigma^\alpha \partial_\beta^{} (1-\chi_k)$, form a bounded subset
 of~$\mathcal{B}_\sigma$, while the $\mathsl{x}^\gamma \partial_\delta^{} f$
 are fixed functions in~$\mathcal{S}_\sigma$, to conclude that there exists a
 compact subset $K$ of~$V$ such that, for all $|\alpha|,|\gamma| \leqslant p$
 and $|\beta|,|\delta| \leqslant q$,
 \[
  \sup_{x \notin K} \,
  \bigl| \bigl( \nabla_\sigma^\alpha \partial_\beta^{} (1-\chi_k) \, \star_\sigma \,
                \mathsl{x}^\gamma \partial_\delta^{} f \bigr) (x) \, \bigr|~
  <~\frac{\epsilon}{C_0}~.
 \]
 Indeed, we may apply equations~(\ref{eq:SCHWEST2}) and~(\ref{eq:PRSTP})
 to show that the Schwartz functions \linebreak $(1+|\mathsl{x}|^2)
 \bigl( \nabla_\sigma^\alpha \partial_\beta^{} (1-\chi_k) \star_\sigma
 \mathsl{x}^\gamma \partial_\delta^{} f) \,$ on~$V$ are uniformly
 bounded in~$k$ (as well as in all other parameters), so the
 Schwartz functions $\, \nabla_\sigma^\alpha \partial_\beta^{}
 (1-\chi_k) \star_\sigma \mathsl{x}^\gamma \partial_\delta^{} f \,$
 on~$V$ vanish at infinity uniformly in~$k$ (as well as in all
 other parameters). Next, we set
 \[
  C_1~=~\max_{|\alpha| \leqslant p , |\beta| \leqslant q} \,
        s_{0,0}^{} \bigl( \nabla_\sigma^\alpha \partial_\beta^{} (1-\chi_k) \bigr)
  ~~,~~
  C_2~=~\max_{|\gamma| \leqslant p , |\delta| \leqslant q} \,
        \| \mathcal{F}^{-1} (\mathsl{x}^\gamma \partial_\delta^{} f) \|_1^{}        
 \]
 and introduce a compact subset $K^*$ of~$V^*$ such that, for all
 $|\gamma| \leqslant p$ and $|\delta| \leqslant q$,
 \[
  \int_{V^* \setminus K^*} d\xi~
  |\mathcal{F}^{-1} (\mathsl{x}^\gamma \partial_\delta^{} f)(\xi)|~
  <~\frac{\epsilon}{2 C_0 C_1}~.
 \]
 Now let $\, L = K + \frac{1}{2} \sigma^\sharp K^*$, which is again a compact
 subset of~$V$, and finally use the uniform convergence of the functions
 $1 - \chi_k$ and their derivatives on~$L$ to infer that there exists
 $\, k_0 \in \mathbb{N} \,$ such that, for $\, k \geqslant k_0 \,$ and
 all $|\alpha| \leqslant p$ and $|\beta| \leqslant q$,
 \[
  \sup_{y \in L} \, \bigl| \bigl( \nabla_\sigma^\alpha \partial_\beta^{}
                                (1-\chi_k) \bigr) (y) \bigr|~
  <~\frac{\epsilon}{2 C_0 C_2}~.
 \]
 Then it follows from equation~(\ref{eq:WMOY7}) that, for $\, k \geqslant
 k_0 \,$ and all $|\alpha|,|\gamma| \leqslant p$ and $|\beta|,|\delta|
 \leqslant q$,
 \begin{eqnarray*}
 \lefteqn{
  \sup_{x \in K} \,
  \bigl| \bigl( \nabla_\sigma^\alpha \partial_\beta^{} (1-\chi_k) \, \star_\sigma \,
                \mathsl{x}^\gamma \partial_\delta^{} f \bigr) (x) \, \bigr|}
  \hspace{1cm} \\[1ex]
  &\leqslant&\!\! \left| \, \int_{V^* \setminus K^*} d\xi~
                  (\nabla_\sigma^\alpha \partial_\beta^{} (1-\chi_k))
                  (x + {\textstyle \frac{1}{2}} \sigma^\sharp \xi) \,
                  (\mathcal{F}^{-1} (\mathsl{x}^\gamma \partial_\delta^{} f))
                  (\xi) \; e^{i \langle \xi, x \rangle} \, \right| \\
  &         &\!\! + \, \left| \, \int_{K^*} d\xi~
                  (\nabla_\sigma^\alpha \partial_\beta^{} (1-\chi_k))
                  (x + {\textstyle \frac{1}{2}} \sigma^\sharp \xi) \,
                  (\mathcal{F}^{-1} (\mathsl{x}^\gamma \partial_\delta^{} f))
                  (\xi) \; e^{i \langle \xi, x \rangle} \, \right| \\[1ex]
  &\leqslant&\!\! s_{0,0}^{} \bigl( \nabla_\sigma^\alpha \partial_\beta^{}
                                   (1-\chi_k) \bigr) \;
                  \int_{V^* \setminus K^*} d\xi~|(\mathcal{F}^{-1}
                  (\mathsl{x}^\gamma \partial_\delta^{} f))(\xi)| \\
  &         &\!\! + \, \sup_{y \in L} \,
                  \bigl| \bigl( \nabla_\sigma^\alpha \partial_\beta^{}
                                (1-\chi_k) \bigr) (y) \bigr| \;
                  \int_{V^*} d\xi~|\mathcal{F}^{-1}
                  (\mathsl{x}^\gamma \partial_\delta^{} f)(\xi)| \\[2ex]
  &    <    &\!\! \frac{\epsilon}{C_0}~.
 \end{eqnarray*}
\end{proof}

\noindent
It may be worthwhile to emphasize that this construction provides
an entire class of approximate identities for the Heisenberg-Schwartz
algebra but no bounded ones: the $\chi_k$ are uniformly bounded in~$k$
only in $\mathcal{B}_\sigma$ but not in~$\mathcal{S}_\sigma$.
This is unavoidable since it is in fact not difficult to prove
that the Heisenberg-Schwartz algebra does not admit any bounded
approximate identities, but we shall not pursue the matter any
further since we do not need this fact in the present paper.

\end{appendix}

\end{document}